\definecolor{cite}{rgb}{0.30,0.60,1.00}
\definecolor{url}{rgb}{0.00,0.00,0.80}
\definecolor{link}{rgb}{0.40,0.10,0.20}
\DeclareSymbolFont{cyrletters}{OT2}{wncyr}{m}{n}
\DeclareMathSymbol{\Sha}{\mathalpha}{cyrletters}{"58}
\providecommand*{\Dashv}{%
  \mathrel{%
    \mathpalette\@Dashv\vDash
  }%
}
\newcommand*{\@Dashv}[2]{%
  \reflectbox{$\m@th#1#2$}%
}
\numberwithin{equation}{section}
\theoremstyle{plain}
\newtheorem{proposition}{Proposition}[section]
\newtheorem{corollary}[proposition]{Corollary}
\newtheorem{lemma}[proposition]{Lemma}
\newtheorem{theorem}[proposition]{Theorem}
\theoremstyle{definition}
\newtheorem{definition}[proposition]{Definition}
\theoremstyle{remark}
\newtheorem{remark}[proposition]{Remark}
\newtheorem{example}[proposition]{Example}
\renewcommand{\c}[1]{\mathcal{#1}}
\renewcommand{\d}[1]{\mathbb{#1}}
\newcommand{\f}[1]{\mathfrak{#1}}
\renewcommand{\r}[1]{\mathrm{#1}}
\renewcommand{\(}{\left(}
\renewcommand{\)}{\right)}
\newcommand{\res}{\mathbin{|}}
\renewcommand{\leq}{\leqslant}
\renewcommand{\geq}{\geqslant}
\newcommand{\cC}{\c C}
\newcommand{\cE}{\c E}
\newcommand{\cF}{\c F}
\newcommand{\cN}{\c N}
\newcommand{\cO}{\c O}
\newcommand{\cP}{\c P}
\newcommand{\cS}{\c S}
\newcommand{\cY}{\c Y}
\newcommand{\cZ}{\c Z}
\newcommand{\dD}{\d D}
\newcommand{\dE}{\d E}
\newcommand{\dL}{\d L}
\newcommand{\dM}{\d M}
\newcommand{\dN}{\d N}
\newcommand{\dQ}{\d Q}
\newcommand{\dV}{\d V}
\newcommand{\dX}{\d X}
\newcommand{\dZ}{\d Z}
\newcommand{\fm}{\f m}
\newcommand{\rd}{\,\r d}
\newcommand{\CF}{\mathbbm{1}}
\newcommand{\Den}{\r{Den}}
\newcommand{\Herm}{\r{Herm}}
\newcommand{\Int}{\r{Int}}
\newcommand{\xra}{\xrightarrow}
\newcommand{\lrarr}{\longrightarrow}
\newcommand{\ovS}{\overline{S}}
\newcommand{\la}{\langle}
\newcommand{\OFb}{{O_{\breve F}}}
\newcommand{\Fb}{\breve{F}}
\newcommand{\q}{q}
\newcommand{\ubE}{\overline{\mathbb{E}}}
\newcommand{\ucE}{\overline{\mathcal{E}}}
\newcommand{\aform}{\ensuremath{\langle\text{~,~}\rangle}\xspace}
\newcommand{\sform}{\ensuremath{(\text{~,~})}\xspace}
\newcommand{\OGr}{\mathrm{OGr}}
\newcommand{\isoarrow}{%
	\ifbool{@display}{\overset{\sim}{\longrightarrow}}{\xrightarrow\sim}%
}
\newcommand{\xla}[2][]{%
	\ifbool{@display}%
	{\settowidth{\olen}{$\overset{#2}{\longleftarrow}$}%
		\settowidth{\ulen}{$\underset{#1}{\longleftarrow}$}%
		\settowidth{\xlen}{$\xleftarrow[#1]{#2}$}%
		\ifdimgreater{\olen}{\xlen}%
		{\underset{#1}{\overset{#2}{\longleftarrow}}}%
		{\ifdimgreater{\ulen}{\xlen}%
			{\underset{#1}{\overset{#2}{\longleftarrow}}}
			{\xleftarrow[#1]{#2}}}}%
	{\xleftarrow[#1]{#2}}
}
\DeclareMathOperator{\diag}{diag}
\DeclareMathOperator{\Fil}{Fil}
\DeclareMathOperator{\GL}{GL}
\DeclareMathOperator{\Hom}{Hom}
\DeclareMathOperator{\Ker}{ker}
\DeclareMathOperator{\Lie}{Lie}
\DeclareMathOperator{\Nm}{Nm}
\DeclareMathOperator{\SO}{SO}
\DeclareMathOperator{\Spec}{Spec}
\DeclareMathOperator{\Spf}{Spf}
\DeclareMathOperator{\tr}{tr}
\DeclareMathOperator{\val}{val}
\DeclareFontFamily{U}{matha}{\hyphenchar\font45}
\DeclareFontShape{U}{matha}{m}{n}{
	<5> <6> <7> <8> <9> <10> gen * matha
	<10.95> matha10 <12> <14.4> <17.28> <20.74> <24.88> matha12
}{}
\DeclareSymbolFont{matha}{U}{matha}{m}{n}
\DeclareFontFamily{U}{mathx}{\hyphenchar\font45}
\DeclareFontShape{U}{mathx}{m}{n}{
	<5> <6> <7> <8> <9> <10>
	<10.95> <12> <14.4> <17.28> <20.74> <24.88>
	mathx10
}{}
\DeclareSymbolFont{mathx}{U}{mathx}{m}{n}
\DeclareMathSymbol{\obot}         {2}{matha}{"6B}
\begin{document}

\title{A Kudla--Rapoport Formula for Exotic Smooth Models of Odd Dimension}

\author{Haodong Yao}
\address{Department of Mathematics, Columbia University, New York NY 10027, United States}
\email{haodong@math.columbia.edu}

\date{\today}

\begin{abstract}
  In this article, we prove a Kudla--Rapoport conjecture for $\cY$-cycles on exotic smooth unitary Rapoport--Zink spaces of odd arithmetic dimension, i.e.\ the arithmetic intersection numbers for $\cY$-cycles equals the derivatives of local representation density. We also compare $\cZ$-cycles and $\cY$-cycles on these RZ spaces. The method is to relate both geometric and analytic sides to the even dimensional case and reduce the conjecture to the results in \cite{LL22}.
\end{abstract}

\maketitle

\tableofcontents

\section{Introduction}
\label{ss:introduction}


\subsection{Background}

The classical \emph{Siegel--Weil formula} (\cites{Sie35,Siegel1951,Weil1965}) relates   special \emph{values} of certain Eisenstein series with theta functions, which are generating series of representation numbers of quadratic forms. Later on, Kudla (\cites{Kudla97, Kudla2004}) proposed an influential program and introduced analogues of theta series in arithmetic geometry. One of the goals of the program is to prove the so-called \emph{arithmetic Siegel--Weil formula} relating the \emph{central derivative} of certain Eisenstein series with a certain arithmetic analogue of theta functions, which are generating series of arithmetic intersection numbers of $n$ special divisors on Shimura varieties associated to $\SO(n-1,2)$ or $\mathrm{U}(n-1,1)$.

For  $\mathrm{U}(n-1,1)$-Shimura varieties,  Kudla and Rapoport (\cite{KR11}) formulated a conjectural  \emph{local arithmetic Siegel--Weil formula} at an \emph{unramified} place with hyperspecial level, now known as the  \emph{Kudla--Rapoport conjecture}. As a local analogue of the arithmetic Siegel--Weil formula, it relates the \emph{central derivative} of local densities of hermitian forms with the arithmetic intersection number of special cycles on unitary Rapoport--Zink spaces. This conjectural identity was recently proved by Li and Zhang in \cite{LZ}. We refer the readers to the introduction of \cite{LZ} for more backgrounds and related results. 

One of the distinguished features of the hyperspecial case \cite{KR11} is that the corresponding Rapoport--Zink space has good reduction. Accordingly, the analytic side has a clear formulation. A natural and important question is to formulate and prove analogues of the Kudla--Rapoport conjecture when  the level structure is nontrivial, where many unexpected new phenomena occur.

At a ramified place,
there are two well-studied unitary Rapoport--Zink spaces with different level structures. One of them is the \emph{exotic smooth model} which has good reduction, and the other one is the \emph{Kr\"amer model} which has bad (semistable) reduction. The analogue of Kudla--Rapoport conjecture for the even dimensional exotic smooth model was formulated and proved by Li and Liu in \cite{LL22} using a strategy similar to \cite{LZ}. For the Kr\"amer model, the analytic side is more involved. In fact, even the formulation of the conjecture is not clear and needs to be modified. This phenomenon in the presence of bad reduction was first discovered by Kudla and Rapoport in \cite{KRshimuracurve} via explicit computation in their study of the Drinfeld $p$-adic half plane.  The Kudla--Rapoport conjecture for Kr\"amer models, in general, was formulated in \cite{HSY3} with conceptual formulation for the modification and proved in \cite{HLSY}.

The present paper focuses on Kudla--Rapoport conjectures for the odd dimensional exotic smooth model. We propose and prove a Kudla--Rapoport conjecture for $\cY$-cycles. The main results we obtained can be used to relax the local assumption at ramified places in the arithmetic Siegel--Weil formula in \cite{LZ}. It should also be useful in extending the co-rank $1$ arithmetic Siegel--Weil formula established in \cite{RyanChen} to odd dimensional exotic smooth models. Lastly, it may be applied to the mixed arithmetic theta lifting and mixed arithmetic inner product formula proposed in \cite{LiuMixed}.

\subsection{Main results}

Let $p$ be an odd prime. Let $F_0$ be a finite extension of $\mathbb{Q}_p$ with residue field $k=\mathbb{F}_q$. Let $\bar{k}$ be a fixed algebraic closure of $k$. Let $F$ be a ramified quadratic extension of $F_0$. Denote by $a\mapsto \bar{a}$ the (nontrivial) Galois involution of $F/F_0$. Let $\pi$ be a uniformizer of $F$ such that $\bar{\pi}=-\pi$. Let $\pi_0=\pi^2$, a uniformizer of $F_0$. Let $\breve F$ be the completion of the maximal unramified extension of $F$. Let $ O_F, \OFb$ be the ring of integers of $F,\Fb$ respectively.

Let $n\geq 1$ be an integer. To define the exotic smooth unitary Rapoport--Zink space, we fix a hermitian formal $O_F$-module $\mathbb{X}=\dX_n$ of signature $(1,n-1)$ over $\bar{k}$. The Rapoport--Zink space $\cN=\mathcal{N}_n$ is the formal scheme over $\Spf O_{\breve F}$ parameterizing hermitian formal $O_F$-modules $X$ of signature $(1,n-1)$ (see Definition \ref{herm formal module}) within the quasi-isogeny class of $\mathbb{X}$. The space $\mathcal{N}$ is formally locally of finite type, formally smooth of relative dimension $n-1$ over $\Spf \OFb$.  

Let $\ubE$ be the framing hermitian formal $O_F$-module of signature $(0,1)$ over $\bar{k}$. We define the \emph{space of quasi-homomorphisms} to be $\dV=\mathbb{V}_n\coloneqq \Hom_{O_F}(\ubE, \mathbb{X}) \otimes_{O_F}F$. We can associate $\dV$ with  a natural $F/F_0$-hermitian form $h$ to make $(\mathbb{V},h)$ a nonsplit nondegenerate $F/F_0$-hermitian space of dimension $n$.  For any nonzero $x\in \mathbb{V}$, we define the \emph{special cycle} $\mathcal{Z}(x)=\cZ_n(x)$ (resp. $\mathcal{Y}(x)=\cY_n(x)$ ) (see Definition \ref{z and y cycles}) to be the deformation locus of $x$ (resp. $\lambda\circ x$) in $\cN$.

Given  an $O_F$-lattice $L\subset \mathbb{V}$ of full rank $n$, we can define integers: the \emph{arithmetic intersection number} $\Int_{n,\cZ}(L)$, $\Int_{n,\cY}(L)$ and the \emph{derived local density} $\partial\Den(L)$.

\begin{definition}
	Let $L\subset \mathbb{V}$ be an $O_F$-lattice and $x_1,\ldots, x_n$ be an $O_F$-basis of $L$. When $n$ is even, we define the \emph{arithmetic intersection number} for $\cZ$-cycles
	\begin{equation}
	\label{eq:IntL}
	\mathrm{Int}_{n,\cZ}(L)\coloneqq \chi(\mathcal{N},\mathcal{O}_{\mathcal{Z}(x_1)} \otimes^\mathbb{L}\cdots \otimes^\mathbb{L}\mathcal{O}_{\mathcal{Z}(x_n)} )\in \mathbb{Z},
	\end{equation}
	where $\mathcal{O}_{\mathcal{Z}(x_i)}$ denotes the structure sheaf of the special divisor $\mathcal{Z}(x_i)$, $\otimes^\mathbb{L}$ denotes the derived tensor product of coherent sheaves on $\mathcal{N}$, and $\chi$ denotes the Euler--Poincar\'e characteristic. By \cite{LL22}*{Corollary 2.35},   $\mathrm{Int}_{n,\cZ}(L)$ is independent of the choice of the basis $x_1,\ldots, x_n$ and hence is a well-defined invariant of $L$ itself.
	
	For general $n$ we define similarly the \emph{arithmetic intersection number} for $\cY$-cycles
	\begin{equation}
	\mathrm{Int}_{n,\cY}(L)\coloneqq \chi(\mathcal{N},\mathcal{O}_{\mathcal{Y}(x_1)} \otimes^\mathbb{L}\cdots \otimes^\mathbb{L}\mathcal{O}_{\mathcal{Y}(x_n)} )\in \mathbb{Z}.
	\end{equation}
	By Proposition \ref{y=pi z even} and Proposition \ref{geometric reduction}, $\mathrm{Int}_{n,\cY}(L)$ is independent of the basis $x_1,\ldots,x_n$ and hence is a well-defined invariant of $L$.
\end{definition}

To define the \emph{derived local density} $\partial\Den(L)$, we need to introduce local densities first. Let $L$ be a hermitian $O_F$-lattice of rank $n$. Let $M$ be another hermitian $O_F$-lattice  (of arbitrary rank) and $\Herm_{L,M}$  denote the $O_{F_0}$-scheme of hermitian $O_F$-module homomorphisms from $L$ to $M$.   Then we  define the corresponding \emph{local density}  to be 
$$\Den(M,L)\coloneqq \lim_{d\rightarrow +\infty}\frac{|\Herm_{L,M}(O_{F_0}/\pi_0^{d})|}{q^{d\cdot d_{L,M}}},$$ 
where $d_{L,M}$ is the dimension of $\Herm_{L,M} \otimes_{O_{F_0}}F_0$.  Let $H$ be the self-dual hermitian $O_F$-lattice of rank $2$ (see Definition \ref{defn for selfdual and unimodular}). It is well-known that  there exists a \emph{local density polynomial} $\Den(M,L,X)\in \mathbb{Q}[X]$  such that for any integer $k\geq 0$,
\begin{equation}\label{eq:Den poly introduction}
\Den(M, L, \q^{-2k})=\Den(H^k\obot M,L).
\end{equation}
Here $H^k$ denotes the orthogonal direct sum of $k$ copies of $H$ and $H^k\obot M$ denotes the orthogonal direct sum of $H^k$ and $M$.       

When $M$    also has rank $n$ and $M\otimes_{O_F}F$ is not isometric to $L\otimes_{O_F}F$, we have $\Den(M,L)=0$. In this case we write $$\Den'(M, L)\coloneqq -2\frac{\rd}{\rd X}\bigg|_{X=1} \Den(M,L, X),$$ and define the (normalized) \emph{derived local density}
\begin{equation}\label{eq: def of Den'}
\partial\mathrm{Den}(L)\coloneqq \frac{\Den'(M_n, L)}{\Den(M_n,M_n)}\in \mathbb{Q}.\end{equation}
Here $M_n=H^{n/2}$ if $n$ is even, and $M_n=H^{(n-1)/2}\obot I_1^1$ if $n$ is odd where $I^1_1$ denotes a rank $1$ hermitian lattice with moment matrix $1$.

Finally we propose and prove the following Kudla--Rapoport conjecture for $\cY$-cycles. For any $O_F$-lattice $L\subset\dV$, we view it as a hermitian $O_F$-lattice under the hermitian norm $h$ unless otherwise specified.

\begin{theorem}[Main Theorem, Theorem \ref{main theorem}]\label{conj: main}
	For any $O_F$-lattice $L\subset \mathbb{V}$, we have 
	$$\mathrm{Int}_{n,\cY}(L)=\partial\Den(L).$$
\end{theorem}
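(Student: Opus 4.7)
The abstract makes the strategy explicit: I would relate both sides to the even-dimensional case and then invoke the Kudla--Rapoport formula of \cite{LL22}. Here is how I would organize the argument.

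\textbf{Step 1: Embedding into an even-dimensional RZ space.} Given an odd integer $n$ and a lattice $L\subset\mathbb{V}_n$ with basis $x_1,\ldots,x_n$, I would construct an auxiliary $(n+1)$-dimensional hermitian space $\mathbb{V}^+=\mathbb{V}_n\obot F\cdot e$ where $e$ has a chosen norm (most naturally, norm $1$ so that the orthogonal summand is self-dual of rank $1$, consistent with the formula $M_n=H^{(n-1)/2}\obot I_1^1$); note that $\mathbb{V}^+$ is then still non-split, matching $\mathbb{V}_{n+1}$. Set $L^+=L\obot O_F\cdot e\subset\mathbb{V}_{n+1}$. On the geometric side, a choice of framing identifies a closed formal subscheme of $\cN_{n+1}$ with $\cN_n$ (this is what I expect Proposition \ref{geometric reduction} to provide), playing the role of the $\cY$-cycle $\cY_{n+1}(e)$ or $\cZ_{n+1}(e)$ of the extra vector.

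\textbf{Step 2: Geometric side.} I would use Proposition \ref{y=pi z even} (which, from its name, should say that $\cY_m(x)=\cZ_m(\pi x)$ in even dimension $m$) together with the embedding of Step 1 to rewrite
\[
\mathrm{Int}_{n,\cY}(L) \;=\; \chi\bigl(\cN_{n+1}, \cO_{\cZ(e)}\otimes^{\mathbb{L}}\cO_{\cZ(\pi x_1)}\otimes^{\mathbb{L}}\cdots\otimes^{\mathbb{L}}\cO_{\cZ(\pi x_n)}\bigr),
\]
i.e.\ as a $\cZ$-cycle intersection number on $\cN_{n+1}$ attached to the even-dimensional lattice $L^+_\pi\coloneqq O_F e \obot \pi L$. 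Verifying this uses the transition between $\cY$ and $\cZ$ via $\lambda$, together with the fact that $e$ has self-dual norm, so that $\cZ_{n+1}(e)$ identifies with the embedded $\cN_n$. The result is
\[
\mathrm{Int}_{n,\cY}(L) \;=\; \mathrm{Int}_{n+1,\cZ}(L^+_\pi).
\]

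\textbf{Step 3: Analytic side.} I would prove the parallel identity
\[
\partial\Den_n(L) \;=\; \partial\Den_{n+1}(L^+_\pi)
\]
by a direct manipulation of the local density polynomials. The key inputs are the behavior of $\Den(M, L', X)$ under an orthogonal sum with a self-dual rank-$1$ summand (which multiplies the polynomial by an explicit linear factor, and relates the target $M_n$ to $M_{n+1}=H\obot M_n$), and the effect of scaling $L$ by $\pi$ on the polynomial. After combining these, the derivative $\Den'$ for the odd-dimensional target matches, up to the correct normalization by $\Den(M_n,M_n)$ versus $\Den(M_{n+1},M_{n+1})$, that for the even-dimensional target.

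\textbf{Step 4: Conclusion.} Applying the even-dimensional Kudla--Rapoport formula of \cite{LL22} to the lattice $L^+_\pi\subset\mathbb{V}_{n+1}$ yields $\mathrm{Int}_{n+1,\cZ}(L^+_\pi)=\partial\Den_{n+1}(L^+_\pi)$, and combining this with Steps 2--3 proves the theorem. The main obstacle I anticipate is the analytic reduction in Step 3: one must track precisely how the scaling by $\pi$ and the addition of the self-dual vector interact with the substitution $X=q^{-2k}$ in the definition \eqref{eq:Den poly introduction} and with the derivative at $X=1$, and show that the resulting identity matches the geometric embedding of Step 2 with no extra error terms. Getting these normalizations to align is where the subtlety lies; the geometric side, by contrast, should follow fairly cleanly once the embedding $\cN_n\hookrightarrow\cN_{n+1}$ and the $\cY$-versus-$\cZ$ translation are set up.
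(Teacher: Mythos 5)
Your high-level strategy — embed $\dV_n$ into an auxiliary even-dimensional $\dV_{n+1}$, translate $\cY$-cycles to $\cZ$-cycles via $\cY(x)\simeq\cZ(\pi x)$ in even dimension, and then invoke \cite{LL22} — does match the paper's. But several of the concrete choices and claims in your proposal are wrong, and the part you dismiss as "following fairly cleanly" is in fact where all the difficulty lives.

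First, the auxiliary vector cannot have norm $+1$. The decomposition $\dV_{n+1}=\dV_n\obot\langle f\rangle_F$ arises from the degree-$q$ isogeny $\phi_0\colon\dX_{n+1}\to\dX_n\times\ubE$, and the extra vector $f=\mathrm{id}_{\ubE}$ has hermitian norm $-1$. Your choice of norm $+1$ gives $\dV_n\obot\langle e\rangle_F$, which is isometric to $\dV_{n+1}$ only when $-1\in\Nm F^\times$, i.e.\ when $q\equiv 1\pmod 4$. In the other case your embedded lattice $L^+_\pi$ does not even live in the correct hermitian space, and nothing downstream applies. Because the analytic reduction must be done with respect to the same auxiliary vector, the paper's Proposition \ref{analytic reduction} is also stated and proved for $L^\#=L\obot I_1^{-1}$, not for $L\obot I_1^{1}$.

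Second, both of your reduction identities are off by a factor of $2$. The correct statements are $\Int_{n,\cY}(L)=\tfrac12\Int_{n+1,\cY}(L^\#)$ and $\partial\Den(L)=\tfrac12\partial\Den(L^\#)$. Geometrically, the factor comes from the decomposition $\cN_{n+1}=\cN_{n+1}^+\amalg\cN_{n+1}^-$ of Proposition \ref{even n decomp lem}: the cycle $\cY_{n+1}(f)=\cZ_{n+1}(\pi f)$ splits as $\cY_{n+1}(f)^+\amalg\cY_{n+1}(f)^-$, and each piece is isomorphic to $\cN_n$, so the derived tensor intersection on $\cN_{n+1}$ counts everything twice. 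Analytically, the factor comes from Lemma \ref{density normalization}: $\Den(M_{2r+1},M_{2r+1})=2\prod_{i=1}^r(1-q^{-2i})$ while $\Den(M_{2r+2},M_{2r+2})=\prod_{i=1}^{r+1}(1-q^{-2i})$, and the extra $2$ in the odd normalizer is exactly what produces the $\tfrac12$. Since you omit both, your identities as written cannot simultaneously be true.

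Third — and most substantively — your claim that "$\cZ_{n+1}(e)$ identifies with the embedded $\cN_n$" for $e$ of unit norm is not what the paper proves, and there is real work here that the proposal elides. What the paper shows (Theorem \ref{identify lower RZ space with special divisor}) is that the closed embedding $\delta^\pm\colon\cN_n\to\cN_{n+1}^\pm$ identifies $\cN_n$ with $\cZ_{n+1}(u)\cap\cN_{n+1}^\pm$ for $u=\pi f$ of hermitian norm $\pi^2$, not with any $\cZ_{n+1}(e)$ for $e$ of unit norm. The proof of this theorem is the technical heart of the paper: it requires the auxiliary moduli spaces $\cP_n$ and $\cP_n'$, the isogeny $\phi_0$, a detailed Dieudonn\'e-module computation for surjectivity (Proposition \ref{surj of delta}), a local-model tangent-space computation to show $\cZ_{n+1}(u)$ is formally smooth of relative dimension $n-1$ (Lemma \ref{tangent space of special divisor} and Corollary \ref{N_n(u) is smooth}), and a formal-\'etaleness argument to conclude. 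On top of that, one still needs Propositions \ref{identify special divisors} and \ref{identification of Y-cycles} to identify the pullbacks of $\cZ$- and $\cY$-cycles along $\delta^\pm$. None of this follows from the even-dimensional identification $\cN_{m-1}\simeq\cZ_m(x)$ for $m$ odd and $h(x,x)=1$ (Proposition \ref{N n-2=Z n-1 x}), which goes in the opposite parity direction and is much easier precisely because the closed embedding there is just $X\mapsto X\times\ucE$. Your proposal inverts the actual difficulty: the analytic reduction is a careful but elementary orthogonal-complement computation (Lemma \ref{ortho in H^s}), whereas the geometric reduction is where the new ideas of the paper are needed.

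Finally, the case $n=1$ requires a separate direct treatment (the paper handles it via the theory of canonical liftings), which your proposal does not address.
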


\subsection{Strategy of proof}

We first prove Theorem \ref{conj: main} for $n$ even by reformulating \cite{LL22}*{Theorem 2.11} in terms of $\cY$-cycles.

\begin{theorem}[\cite{LL22}*{Theorem 2.11}]
	Let $n\geq 2$ be even. Normalize the hermitian form on $\dV_n$ by $h'=\pi^{-2}h$. For any $O_F$-lattice $L\subset\dV_n$ viewed as a hermitian $O_F$-lattice under the hermitian form $h'$, we have
	$$\Int_{n,\cZ}(L)=\partial\Den(L).$$
\end{theorem}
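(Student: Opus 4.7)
The plan is to recognize that this theorem is, up to rescaling of the hermitian form, a verbatim restatement of Li--Liu's Theorem 2.11 in \cite{LL22}, so the proof consists of carefully matching the conventions used in the two papers on both the geometric and analytic sides.

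First I would observe that the geometric side is insensitive to the choice of hermitian form on $\dV_n$. Indeed, the special divisor $\cZ(x)$ is defined as the deformation locus of a quasi-homomorphism $x\in\Hom_{O_F}(\ubE,\dX)\otimes F$, and this definition involves $x$ purely as an $O_F$-linear map; no hermitian pairing enters. Hence $\Int_{n,\cZ}(L)$ depends only on $L$ as an $O_F$-submodule of $\dV_n$, and its value is identical whether we equip $\dV_n$ with the natural form $h$ or the rescaling $h'=\pi^{-2}h$. In particular the LHS here agrees with the LHS in \cite{LL22}*{Theorem 2.11} with no modification.

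Next I would handle the analytic side. The derived local density $\partial\Den(L)$ is computed from the moment matrix $T_L$ of $L$, which rescales as $\pi^{-2}T_L$ when we switch from $h$ to $h'$. The key compatibility to verify is that the target hermitian lattice $M_n=H^{n/2}$ appearing in our \eqref{eq: def of Den'}, together with the definition of the polynomial $\Den(M,L,X)$ via \eqref{eq:Den poly introduction}, matches the reference lattice and normalization used in \cite{LL22}*{Theorem 2.11}. Concretely, the self-dual rank-$2$ lattice $H$ used here must be identified with the hermitian structure on the appropriate vertex lattice in the Li--Liu setup, after accounting for the factor of $\pi^{-2}$. Since both the local density polynomial and the normalization by $\Den(M_n,M_n)$ are intrinsic to the hermitian structure, once the moment matrices are rescaled to agree, the two derived local densities coincide, and the identity $\Int_{n,\cZ}(L)=\partial\Den(L)$ follows directly from \cite{LL22}.

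The main potential obstacle is bookkeeping around sign and scaling conventions: the choice of uniformizer $\pi$ with $\bar\pi=-\pi$, the normalization of the natural hermitian form on $\Hom_{O_F}(\ubE,\dX)\otimes F$, and the convention for the derivative $\Den'$ (the factor $-2$ in our definition) must all be traced through consistently. Any remaining discrepancy between the natural form $h$ inherited from the RZ datum and the form implicit in \cite{LL22} should be precisely $\pi^{-2}$, which is exactly what the rescaling $h'=\pi^{-2}h$ absorbs; verifying this explicitly is the only nontrivial step in the reduction.
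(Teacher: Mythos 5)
Your proposal is correct in its essential point: the paper does not prove this theorem but simply imports it as \cite{LL22}*{Theorem 2.11}, explicitly prefacing it with ``the following theorem is one of the main results in \cite{LL22}'' and noting in the preceding paragraph that Li--Liu work with the normalized form $h'=\pi^{-2}h$. Your extended discussion of convention-matching (sign of $\pi$, normalization of $\Den'$, choice of $H$, etc.)\ is a sensible sanity check but is not actually carried out in the paper, because LL22's Theorem 2.11 is stated verbatim under the form $h'$; the only convention translation the paper records is the separate identity $\partial\Den_{h'}(L)=\partial\Den_{-h'}(L)=\partial\Den(\pi^{-1}L)$, which is used afterward in deducing Corollary~\ref{main theorem for even}, not in justifying Theorem~\ref{even z KR} itself.
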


Theorem \ref{conj: main} for $n$ even then follows from the fact that there is an isomorphism of hermitian $O_F$-lattices $(L,h)\simeq (\pi L,-h')$, and the following relation between $\cZ$-cycles and $\cY$-cycles for $n$ even.

\begin{proposition}[Proposition \ref{y=pi z even}]
	Let $n\geq 2$ be even. For any nonzero $x\in\dV_n$ we have natural identification $\cY(x)\simeq\cZ(\pi x)$. As a corollary we get $\mathrm{Int}_{n,\cY}(L)=\mathrm{Int}_{n,\cZ}(\pi L)$.
\end{proposition}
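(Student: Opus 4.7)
The plan is to directly compare the deformation conditions defining $\cZ(\pi x)$ and $\cY(x)$, and then to deduce the intersection-number identity formally. By the definitions recalled in the introduction, $\cZ(y)$ for $y\in\dV_n$ is the formal subscheme of $\cN$ parameterizing those deformations over which $y$ lifts to an $O_F$-linear homomorphism, while $\cY(x)$ is the analogous locus for the twisted quasi-homomorphism $\lambda\circ x$ built from the polarizations $\lambda_\ubE$ and $\lambda_\dX$. The content of the first assertion is thus the identification, as elements of $\dV_n$, of $\lambda\circ x$ with $\pi\cdot x$ under the canonical isomorphisms afforded by the polarizations.

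To establish this identification I would invoke the specific polarization data characterizing the even dimensional exotic smooth model. In that setting the polarization on $\ubE$ is non-principal with kernel $\ubE[\pi]$, so that $\lambda_\ubE^\vee\circ\lambda_\ubE$ is multiplication by $\pi$ (up to a unit), while $\lambda_\dX$ satisfies the mirror condition on the framing object. Unwinding the construction of $\lambda\circ x$, which composes $x$ with $\lambda_\dX$ and with the appropriate inverse of $\lambda_\ubE$ via the canonical pairing identifying $\dV_n$ with its dual, then yields an identity $\lambda\circ x=\pi\cdot x$ in $\dV_n$. Since a deformation $X$ of $\dX$ carries a compatible polarization by the moduli problem defining $\cN$, the deformation condition for $\lambda\circ x$ over $X$ is equivalent to the deformation condition for $\pi\cdot x$, giving $\cY(x)=\cZ(\pi x)$ as closed formal subschemes of $\cN$.

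The corollary for intersection numbers then follows tautologically: for any $O_F$-basis $x_1,\ldots,x_n$ of $L$, the elements $\pi x_1,\ldots,\pi x_n$ form an $O_F$-basis of $\pi L$, and the cycle-level identification gives $\cO_{\cY(x_i)}=\cO_{\cZ(\pi x_i)}$ in $\cN$ for each $i$, so the derived tensor products and hence the Euler--Poincar\'e characteristics agree.

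I expect the main obstacle to lie in the first step: carefully verifying that the polarization bookkeeping in the even dimensional exotic smooth setting really does compose to multiplication by $\pi$ on $\dV_n$. This is a conventions-heavy but substantive calculation, requiring one to match the definition of $\cY(x)$ adopted here with the explicit polarization datum on $\dX=\dX_n$ of signature $(1,n-1)$, and it is precisely what distinguishes the even case from the odd case, where no such clean identification can hold and where a more delicate comparison (via the reduction to the even case) is needed.
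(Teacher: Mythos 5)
Your high‑level strategy is the right one, and the corollary step is fine: once $\cY(x_i)=\cZ(\pi x_i)$ as closed formal subschemes the equality of Euler characteristics is immediate, and $\pi x_1,\dots,\pi x_n$ is indeed a basis of $\pi L$. The problem is in the key step, where you have mislocated the source of the factor~$\pi$.

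You assert that the polarization $\lambda_{\ubE}$ on $\ubE$ is non‑principal with kernel $\ubE[\pi]$ and that ``$\lambda_{\ubE}^\vee\circ\lambda_{\ubE}$ is multiplication by $\pi$.'' This is incorrect: $\lambda_{\ubE}$ (equivalently $\lambda_{\dE}$) is \emph{principal}, as stated explicitly in \S\ref{framing objects}, so it is an isomorphism and there is no $\pi$ to be gained there. Moreover, the defining condition for $\cY(x)$ is that the map $\lambda\circ\rho^{-1}\circ x\colon\ucE_S\to X^\vee$ lifts to a homomorphism; no $\lambda_{\ubE}^{-1}$ enters this condition (you may be conflating it with the definition of the hermitian form $h$ on $\dV_n$). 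Finally, ``$\lambda\circ x=\pi\cdot x$ in $\dV_n$'' is not well formed: $\lambda\circ x$ maps into $X^\vee$ while $\pi x$ maps into $X$, so they cannot be compared without first fixing an identification $X\simeq X^\vee$.

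The mechanism that actually works, and which is what the paper uses via Remark~\ref{re:polarization}, is the $\pi$‑modularity of the polarization on the \emph{ambient} module $X$ of signature $(1,n-1)$. For $n$ even, $\ker\lambda_X=X[\iota_X(\pi)]$, so there is a unique $\iota_X$‑compatible symmetrization \emph{isomorphism} $\sigma_X\colon X\to X^\vee$ with $\lambda_X=\sigma_X\circ\iota_X(\pi)$, and $\sigma_X$ lifts along with $\lambda_X$ over any $S$‑point of $\cN_n$. Post‑composing the $\cY(x)$‑condition with $\sigma_X^{-1}$ turns it into the requirement that $\iota_X(\pi)\circ\rho^{-1}\circ x=\rho^{-1}\circ\iota_{\dX_n}(\pi)\circ x$ lift into $X$, which is exactly the $\cZ(\pi x)$‑condition, and conversely by post‑composing with $\sigma_X$. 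This is the isomorphism $\cY(x)\simeq\cZ(\pi x)$ and its inverse. This clean identification is available precisely because $n$ is even; for $n$ odd the polarization is only almost $\pi$‑modular and no such $\sigma_X$ exists, which is why the odd case needs the reduction to the even case.
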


Theorem \ref{conj: main} for $n=1$ follows from direct computations as we shall explain in \S\ref{proof of main theorem}.

For the rest of the section we assume $n\geq 3$ is odd. We choose an $O_F$-isogeny $\phi_0$ of degree $q\colon\dX_{n+1}\to\dX_n\times\ubE$ as in \cite{RSZ18}*{\S9} (see \eqref{phi0}). This isogeny induces a natural orthogonal decomposition of hermitian spaces $\dV_{n+1}=\dV_n\obot \langle f\rangle_F$, where $f$ has hermitian norm $-1$. For any $O_F$-lattice $L\subset \dV_n$, view it as a hermitian lattice (of rank $n$) in $\dV_{n+1}$ via the orthogonal decomposition, and set $L^{\#}=L\obot \langle f\rangle_{O_F}$.  We show separately that 

\begin{proposition}[Analytic reduction, Proposition \ref{analytic reduction}]
	$$\partial\Den(L)=\frac{1}{2}\partial\Den(L^{\#}).$$
\end{proposition}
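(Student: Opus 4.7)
The approach is to establish an explicit polynomial identity relating $\Den(M_{n+1}, L^{\#}, X)$ to $\Den(M_n, L, X)$, and then differentiate at $X = 1$.

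Since $p$ is odd, the hyperbolic plane decomposes as $H \simeq I_1^1 \obot I_1^{-1}$, so that $M_{n+1} = H^{(n+1)/2} = M_n \obot I_1^{-1}$. Meanwhile $L^{\#} = L \obot \langle f \rangle_{O_F}$ with $\langle f \rangle_{O_F} \simeq I_1^{-1}$. Both the source and the target in the relevant hermitian Hom-schemes therefore carry a distinguished orthogonal rank-one summand of norm $-1$, and the plan is to peel off this common summand from both sides.

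The core step is to analyze $\Herm_{L^{\#}, H^k \obot M_{n+1}}(O_{F_0}/\pi_0^d)$ by stratifying according to the image of the distinguished generator $f \in L^{\#}$. Such an image must be a norm-$(-1)$ vector in $H^k \obot M_{n+1}$; the orthogonal complement of a sufficiently generic such vector is isomorphic to $H^k \obot M_n$; and the remaining data of the homomorphism restricts to an element of $\Herm_{L, H^k \obot M_n}$. Orbit-counting under the unitary groups and passage to the limit $d \to \infty$ should produce a polynomial identity of the shape
$$\Den(M_{n+1}, L^{\#}, X) = P(X)\, \Den(M_n, L, X)$$
(possibly with exceptional-stratum corrections), where $P(X)$ is a universal rational function that can be pinned down by specializing to $L = M_n$, in which case $L^{\#} = M_{n+1}$ and $P(X)$ is expressible in terms of $\Den(M_{n+1}, M_{n+1}, X)/\Den(M_n, M_n, X)$. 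Combining this identity with the vanishing $\Den(M_n, L) = 0$ (since $L \otimes F \simeq \mathbb{V}_n$ is non-split while $M_n \otimes F$ is split) and with the normalization in \eqref{eq: def of Den'} isolates the desired relation between $\partial\Den(L)$ and $\partial\Den(L^{\#})$.

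The main obstacle is to make the orbit-counting precise enough to extract exactly the factor $1/2$: one must carefully bookkeep the exceptional strata in which the orthogonal complement of $\phi(f)$ fails to be of type $H^k \obot M_n$, and provide a conceptual origin for the factor $2$. An alternative, potentially cleaner route is to interpret both $\partial\Den(L)$ and $\partial\Den(L^{\#})$ as derivatives of Whittaker coefficients of appropriate degenerate Eisenstein series on unitary groups of adjacent ranks; a doubling-type functional equation together with the normalization of local intertwining operators would then produce the factor $1/2$ automatically, bypassing the combinatorial orbit analysis.
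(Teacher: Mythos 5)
Your core strategy---decompose $L^{\#}=L\obot I_1^{-1}$, restrict hermitian homomorphisms along the distinguished rank-one summand, and identify the fiber with hermitian homomorphisms into the orthogonal complement of the image of $f$---is the same as the paper's (which phrases this as a restriction map $R\colon\Herm_{L^{\#},M}(O_{F_0}/\pi_0^d)\to\Herm_{I_1^{-1},M}(O_{F_0}/\pi_0^d)$ whose fibers are all $\Herm_{L,M'}(O_{F_0}/\pi_0^d)$ with $M'\simeq M_{n+2k}$). However, two concrete things go wrong. First, the opening claim that $H\simeq I_1^1\obot I_1^{-1}$, hence $M_{n+1}=M_n\obot I_1^{-1}$, is false for a \emph{ramified} extension $F/F_0$: $H$ is self-dual with a single $2\times 2$ normal block of valuation $-1$ (so $(s,t)=(0,1)$ in the notation of Remark \ref{normal basis}), while $I_1^1\obot I_1^{-1}$ has two diagonal blocks of valuation $0$ and $(I_1^1\obot I_1^{-1})^\vee=\pi^{-1}(I_1^1\obot I_1^{-1})$ (so $(s,t)=(2,0)$). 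Since $M_{n+1}\neq M_n\obot I_1^{-1}$, specializing to $L=M_n$ does not give $L^{\#}=M_{n+1}$, so your proposed way of pinning down $P(X)$ does not work. The paper instead computes $P(X)=1-q^{-1-n}X$ directly via the known formula $\Den(M_{n+1}\obot H^k, I_1^{-1})=1-q^{-1-n-2k}$ from \cite{LL22}*{Lemma 2.15}.

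Second, you leave the central technical step unaddressed while worrying about a non-issue. The ``exceptional strata'' you fear do not occur: by Lemma \ref{ortho in H^s}, for any $\phi\in H^s\setminus \pi H^s$ the orthogonal complement is $H^{s-1}\obot I$ with $I$ of norm $-h(\phi,\phi)$, and every lift of a norm-$(-1)$ class in $M/\pi^{2d}M$ is automatically outside $\pi M$ for $d\geq 1$, so the complement is always $\simeq M_{n+2k}$. What actually needs proof is that the finite-level condition $\bar h(\bar y,\bar x)=0$ cuts out precisely $M(x)/\pi^{2d}M(x)$ for a well-chosen lift $x$ of $\bar x$; the paper does this by first modifying the lift to arrange $h(x',x')=-h(x,x)^2$ and then lifting elements of the finite-level orthogonal space into $M(x')$. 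Finally, the factor $1/2$ does not come from the orbit count: the count gives $\Den'(L^{\#})=(1-q^{-1-n})\Den'(L)$, and $1/2$ enters only through the normalization, because $\Den(M_n,M_n)$ carries an extra factor of $2$ for $n$ odd (Lemma \ref{density normalization}, ultimately from $\Den(I_1^1,I_1^1)=2$). Your alternative Whittaker/doubling route is speculative and is not what the paper does.
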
 

\begin{proposition}[Geometric reduction, Proposition \ref{geometric reduction}]
	$$\mathrm{Int}_{n,\cY}(L)=\frac{1}{2}\mathrm{Int}_{n+1,\cY}(L^{\#}).$$
\end{proposition}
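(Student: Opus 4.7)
The plan is to use the degree-$q$ isogeny $\phi_0\colon \dX_{n+1}\to \dX_n\times \ubE$ to construct a closed embedding $j\colon \cN_n\hookrightarrow \cN_{n+1}$ whose image is supported on $\cY_{n+1}(f)$, and then to show that, as a closed subscheme of $\cN_{n+1}$, $\cY_{n+1}(f)$ has multiplicity $2$ along $j(\cN_n)$. Together with functoriality of the remaining $\cY$-cycles along $j$ and the projection formula, this will yield the stated relation.

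First, I would construct $j$ by a pullback-via-$\phi_0$ procedure: for a test object $(X,\iota_X,\lambda_X,\rho_X)\in \cN_n(R)$ over an $\OFb$-algebra $R$, form the product $X\times \cE_R$ with the canonical lift $\cE_R$ of $\ubE$, and transport the framing through $\phi_0$ to obtain a hermitian formal $O_F$-module of signature $(1,n)$ satisfying the exotic smoothness condition. Rigidity of the RZ moduli problem shows $j$ is a closed immersion. The image factors through $\cY_{n+1}(f)$ because, by the very definition of $f$ (see \eqref{phi0}), $f$ corresponds under $\phi_0$ to the canonical inclusion $e\colon \ubE\hookrightarrow \dX_n\times \ubE$, so $\lambda_{n+1}\circ f$ pulls back to the honest homomorphism $\lambda_{\ubE}\circ e$, which deforms canonically. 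Conversely, any point of $\cY_{n+1}(f)$ should come from such a factorization, giving a set-theoretic identification of $\cY_{n+1}(f)$ with $j(\cN_n)$.

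Next, I would establish the multiplicity~$2$. Since $n+1$ is even, Proposition \ref{y=pi z even} gives $\cY_{n+1}(f)=\cZ_{n+1}(\pi f)$, and $\pi f$ has hermitian norm $\pi_0$, of valuation $1$. An explicit local-model/Grothendieck--Messing analysis in the even-dimensional exotic smooth model should show that the Cartier divisor $\cZ_{n+1}(\pi f)$ is locally cut out along $j(\cN_n)$ by the square of a local parameter, giving
\[
\bigl[\cO_{\cY_{n+1}(f)}\bigr]\;=\;2\cdot \bigl[j_*\cO_{\cN_n}\bigr]\quad\text{in }K_0(\cN_{n+1}).
\]
Simultaneously, for each $x_i\in L\subset \dV_n\subset \dV_{n+1}$, one checks functoriality of $\cY$-cycles along $j$, namely $Lj^*\cO_{\cY_{n+1}(x_i)}\cong \cO_{\cY_n(x_i)}$, since lifting $\lambda_{n+1}\circ x_i$ on the pulled-back object is equivalent to lifting $\lambda_n\circ x_i$ on $X$ itself.

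Finally, combining these with the projection formula gives
\begin{align*}
\mathrm{Int}_{n+1,\cY}(L^\#)
&= \chi\bigl(\cN_{n+1},\,\cO_{\cY_{n+1}(x_1)}\otimes^\mathbb{L}\cdots\otimes^\mathbb{L}\cO_{\cY_{n+1}(x_n)}\otimes^\mathbb{L}\cO_{\cY_{n+1}(f)}\bigr)\\
&= 2\cdot\chi\bigl(\cN_n,\,\cO_{\cY_n(x_1)}\otimes^\mathbb{L}\cdots\otimes^\mathbb{L}\cO_{\cY_n(x_n)}\bigr)\;=\;2\,\mathrm{Int}_{n,\cY}(L),
\end{align*}
as desired. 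The main obstacle is the multiplicity-$2$ calculation for $\cZ_{n+1}(\pi f)$ along $j(\cN_n)$: it requires a hands-on local-coordinate computation in the exotic smooth model to see that the defining equation is (up to a unit) a square of a local parameter. The remaining steps are formal consequences of the functoriality of the RZ moduli problem and of special cycles.
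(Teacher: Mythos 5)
Your high-level strategy — use $\phi_0$ to embed $\cN_n$ into $\cN_{n+1}$, identify $\cY_{n+1}(f)$ with (copies of) $\cN_n$, pull back the remaining $\cY$-cycles, and appeal to the projection formula — is the right framework and matches the paper's. But the mechanism you propose for the factor of $2$ is wrong, and it is the crux of the matter.

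You claim $\cY_{n+1}(f)=\cZ_{n+1}(\pi f)$ is cut out along a single embedded copy $j(\cN_n)$ by the square of a local parameter, i.e.\ has multiplicity $2$ along $j(\cN_n)$. This is incompatible with the geometry. First, $\cZ_{n+1}(\pi f)$ is formally smooth over $\Spf O_{\breve F}$ (Corollary \ref{N_n(u) is smooth}, since $h(\pi f,\pi f)\in\pi^2 O_{F_0}^\times$), hence in particular reduced; a non-reduced multiplicity-$2$ structure is ruled out. Second, the actual source of the factor $2$ is that $n+1$ is even, so $\cN_{n+1}$ decomposes into two open-and-closed pieces $\cN_{n+1}^+\amalg\cN_{n+1}^-$ (Proposition \ref{even n decomp lem}). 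Correspondingly $\cZ_{n+1}(\pi f)=\cZ_{n+1}(u)^+\amalg\cZ_{n+1}(u)^-$, and Theorem \ref{identify lower RZ space with special divisor} shows that \emph{each} summand is isomorphic (reduced, multiplicity one) to $\cN_n$ via the respective closed embedding $\delta^\pm$. After pulling back $\cY_{n+1}(x_i)\cap\cN_{n+1}^\pm$ to $\cY_n(x_i)$ via Proposition \ref{identification of Y-cycles}, the Euler characteristic on $\cN_{n+1}$ is a sum of two equal contributions, one per component — which is where the $2$ comes from.

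Two secondary gaps: (i) your description of $j$ as sending $X\mapsto X\times\cE_R$ is not the paper's construction; the embedding $\delta^\pm$ is defined through the auxiliary moduli spaces $\cP_{n+1}, \cP'_{n+1}$ and an isogeny $\phi$ of degree $q$ lifting $\phi_0$, so the image object is not literally the product but a modification of it by a degree-$q$ isogeny, and it is not clear a priori that there is a canonical inverse. (ii) You assert that every point of $\cY_{n+1}(f)$ arises this way, but that is the hard surjectivity statement (Proposition \ref{surj of delta}), which requires a detailed Dieudonn\'e-module analysis and is precisely what makes Theorem \ref{identify lower RZ space with special divisor} nontrivial; it cannot be dismissed as formal.
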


Theorem \ref{conj: main} for $L$ then follows from the above reduction Propositions and Theorem \ref{conj: main} for $L^{\#}$.

For the analytic reduction, this is done by looking at the possible image of $f$ in $H^s$ and describing the orthogonal complement of the image in details (Proposition \ref{ortho in H^s}).

For the geometric reduction, we first follow the computations in \cite{PR09}*{\S5c} for local models to get a detailed description of the tangent space of $\cN_{m}$ and $\cZ$-cycles on $\cN_m$ at geometric points for $m$ even (Proposition \ref{tangent space of special divisor}). We then make use of the closed embedding $\delta^{\pm}\colon\cN_n\to\cN_{n+1}$ constructed in \cite{RSZ18}*{Proposition 12.1}. Consider $u=\pi f\in\dV_{n+1}$, we prove that

\begin{theorem}[Theorem \ref{identify lower RZ space with special divisor}]
	The closed embedding $\delta^+\colon\cN_n\to\cN_{n+1}^+$ induces an isomorphism (still denoted by $\delta^+$)
	$$\delta^+\colon\cN_n\stackrel{\sim}{\lrarr}\cZ(u)^+\coloneqq\cZ(u)\cap\cN_{n+1}^+.$$
	The same is true for $\delta^-$.
\end{theorem}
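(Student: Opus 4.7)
The plan is to factor $\delta^+$ through $\cZ(u)^+$ as a closed embedding and then verify equality by matching underlying reduced structure and infinitesimal neighborhoods at every geometric point. First, I would show $\delta^+(\cN_n)\subseteq\cZ(u)^+$ by unwinding the construction of $\delta^+$ from \cite{RSZ18}*{Proposition 12.1}. Given $(X,\iota,\lambda,\rho)\in\cN_n(S)$, the image $(X',\iota',\lambda',\rho')\in\cN_{n+1}^+(S)$ is built from $X\times\ubE_S$ via the degree-$q$ framing isogeny $\phi_0\colon\dX_{n+1}\to\dX_n\times\ubE$. Under the orthogonal decomposition $\dV_{n+1}=\dV_n\obot\langle f\rangle_F$, the vector $f$ corresponds to the canonical inclusion $\ubE\hookrightarrow\dX_n\times\ubE$ composed with $\phi_0^{-1}$; since $\ker\phi_0$ has order $q$ and is killed by $\pi$, the quasi-homomorphism $u=\pi f$ does lift to an honest $O_F$-homomorphism $\ubE_S\to X'$. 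This places $\delta^+(\cN_n)$ inside $\cZ(u)^+$ and yields a closed embedding $\delta^+\colon\cN_n\hookrightarrow\cZ(u)^+$.

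Next I would check bijectivity on $\bar k$-points via the Dieudonn\'e/lattice description of unitary Rapoport--Zink spaces. Points of $\cN_n(\bar k)$ and $\cZ(u)^+(\bar k)$ correspond respectively to special lattices $L\subset\dV_n$ and special lattices $L'\subset\dV_{n+1}$ containing $u$ and lying in the chosen connected component. Because $f$ has unit hermitian norm, a standard analysis of these lattices (in the spirit of the decomposition arguments in \cite{LL22} and \cite{RSZ18}) shows that every such $L'$ splits orthogonally as $L\obot O_F\cdot f$ for a unique special $L\subset\dV_n$, and conversely every such orthogonal sum arises this way. Under these bijections $\delta^+$ becomes $L\mapsto L\obot O_F\cdot f$, hence is bijective on geometric points.

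Finally, I would compare tangent spaces. Applying Proposition~\ref{tangent space of special divisor} to $\cZ(u)\subset\cN_{n+1}^+$, at any geometric point $y$ in the image of $\delta^+$ the tangent space $T_y\cZ(u)^+$ is cut out of the $n$-dimensional $T_y\cN_{n+1}^+$ by a single linear form that the local-model calculation shows is nonzero. Hence $\cZ(u)^+$ is itself formally smooth of dimension $n-1$ along the image of $\delta^+$. A closed embedding between formally smooth formal schemes of equal dimension that is bijective on underlying reduced loci is an isomorphism, yielding $\delta^+\colon\cN_n\xrightarrow{\sim}\cZ(u)^+$; the argument for $\delta^-$ is identical. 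The hard part I anticipate is the tangent-space step: explicitly identifying the linear form defining $T_y\cZ(u)^+$ and verifying it is a regular parameter uniformly on the image of $\delta^+$, which is exactly what the local-model computation behind Proposition~\ref{tangent space of special divisor} is designed to supply.
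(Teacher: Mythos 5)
Your proposal correctly identifies the paper's three-step structure (factor $\delta^+$ through $\cZ(u)^+$, check bijectivity on geometric points, conclude via formal smoothness and equal dimensions), and steps (1) and (3) are essentially the paper's own arguments: step (1) uses the lift $\phi'\circ(0,\id)$ where $\phi'\circ\phi=\iota(\pi)$, and step (3) is the conormal-sequence argument using Corollary~\ref{N_n(u) is smooth}.

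However, your step (2) contains a concrete error that would sink the proof as written. You claim that a $\cZ(u)^+$-point corresponds to a lattice $L'\subset\dV_{n+1}$ that ``splits orthogonally as $L\obot O_F\cdot f$ for a unique special $L\subset\dV_n$.'' This is false: $\delta^+$ is built from the degree-$q$ isogeny $\phi_0$, not an isomorphism, so the Dieudonn\'e lattice $\dM$ of the image point is \emph{not} an orthogonal direct sum with a rank-one factor spanned by $e$. The actual relation, recorded in the paper's \eqref{L sub M}, is the chain
$\pi\dL^*\obot \pi O_{\breve F}e \subset^1 \dM \subset^1 \dL\obot O_{\breve F}e$,
where $\dL$ is recovered as $\pi\bigl((\pi e)^\perp\bigr)^*$; $\dM$ sits strictly between two orthogonal sums but is not one itself. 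Moreover, the phrase ``a standard analysis of these lattices shows'' hides the genuinely hard content of the paper's Proposition~\ref{surj of delta}: one must verify that the extracted lattice $\dL$ again satisfies the Verschiebung conditions \eqref{VModd}, in particular $V\dL\subset\dL$, $\pi_0\dL\subset V\dL$, and the co-length bound $V\dL\subset^{\leq 1}V\dL+\pi\dL$. The paper does this via Lemma~\ref{existence of good basis}, Lemma~\ref{describe VM}, and a two-case analysis depending on the position of $\pi e$ relative to $V\dM$. You anticipate that the tangent-space step (3) will be the hard part, but that is essentially supplied by Lemma~\ref{tangent space of special divisor}; the real work is in step (2), which your outline does not yet address correctly.
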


Theorems of this kind identifying special $\cZ$-cycles with Rapoport--Zink spaces of lower dimension are proved in \cite{Ter13}*{Lemma 2} ($F/F_0$ unramified, hyperspecial level), \cite{Cho18}*{Proposition 5.10} ($F/F_0$ unramified, maximal parahoric level), \cite{HSY3}*{Proposition 2.6} ($F/F_0$ ramified, Kr\"amer model) and Proposition \ref{N n-2=Z n-1 x} ($F/F_0$ ramified, exotic smooth model, even to odd dimension). In these cases, the inverse morphism can be constructed directly, as the closed embedding of RZ spaces is straightforward. However in our setting, the definition of the morphism $\delta^+$ is much more involved (see \eqref{def delta}), and it is not clear what the inverse morphism looks like.

We adopt a different method and prove this theorem by three steps$\colon$
 \begin{itemize}
 	\item [(1)] Firstly we show that the closed embedding $\delta^+$ factors through $\cZ(u)^+$(Proposition \ref{factor through}).
 	\item [(2)] Secondly we prove that $\delta^+$ induces a bijection on geometric points between $\cN_n$ and $\cZ(u)^+$(Proposition \ref{surj of delta}). This part involves a detailed description of the Dieudonn\'e modules of the geometric points of the RZ spaces.
 	\item [(3)] As we know both $\cN_n$ and $\cZ(u)^+$ are formally smooth of same relative dimension (Corollary \ref{N_n(u) is smooth}) we deduce that we have isomorphism $\cN_n\stackrel{\sim}{\to}\cZ(u)^+$ by formal algebraic geometry argument.
 \end{itemize}

To relate arithmetic intersection numbers on different exotic smooth models, we also study the pullback of special cycles. For any nonzero $x\in\dV_n\subset\dV_{n+1}$, there are natural morphisms $\delta_x^+\colon \cZ_{n+1}(x)\cap\cN_n\to\cZ_n(x)$ and $\gamma_x^+\colon \cY_n(x)\to\cY_{n+1}(x)\cap\cN_n$. By showing the surjectivity on geometric points and formal smoothness of these morphisms, we prove that

\begin{proposition}[Proposition \ref{identify special divisors}]
	The morphism $\delta_x^+$ is an isomorphism if $h(x,x)\in\pi_0 O_{F_0}$.
\end{proposition}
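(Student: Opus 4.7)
The plan is to mirror the three-step strategy used in the proof of Theorem~\ref{identify lower RZ space with special divisor}: the morphism $\delta_x^+$ is already natural by construction, so it suffices to show that it is bijective on $\bar k$-points and that both source and target are formally smooth over $\Spf \OFb$ of the same relative dimension, with $\delta_x^+$ inducing isomorphisms on tangent spaces; the isomorphism then follows by a standard formal-algebraic-geometry argument.

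For the bijection on $\bar k$-points, a $\bar k$-point of $\cZ_n(x)$ corresponds to a hermitian formal $O_F$-module $X$ in the isogeny class of $\dX_n$ for which $x\colon \ubE\to\dX_n$ lifts through the framing to an honest $O_F$-homomorphism $\ubE\to X$. Applying $\delta^+$ produces a point of $\cN_{n+1}^+(\bar k)$ via the isogeny $\phi_0\colon\dX_{n+1}\to\dX_n\times\ubE$; one checks that, under the orthogonal decomposition $\dV_{n+1}=\dV_n\obot\langle f\rangle_F$, the element $x$ continues to lift to an $O_F$-homomorphism on the $(n+1)$-dimensional side precisely when a certain $\pi$-divisibility of its hermitian norm is satisfied, which is exactly the hypothesis $h(x,x)\in\pi_0 O_{F_0}$. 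The verification is a direct Dieudonn\'e-module computation, in the style of the proof of Proposition~\ref{surj of delta}.

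For formal smoothness, the hypothesis $h(x,x)\in\pi_0 O_{F_0}$ combined with Proposition~\ref{tangent space of special divisor} shows that $\cZ_n(x)$ is a formally smooth divisor in $\cN_n$. On the other side, $\cZ_{n+1}(x)\cap\cN_n$ is the intersection of the divisor $\cZ_{n+1}(x)\subset\cN_{n+1}^+$ with the formally smooth closed formal subscheme $\cN_n\hookrightarrow\cN_{n+1}^+$; applying the tangent-space description to $\cN_{n+1}$ yields that this intersection is formally smooth of the same relative dimension as $\cZ_n(x)$, and the tangent map induced by $\delta_x^+$ identifies the two tangent spaces at every geometric point.

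The main obstacle is to verify that at each geometric point, the extra deformation condition imposed by $\cZ_{n+1}(x)$ (above and beyond the condition defining $\cN_n\hookrightarrow\cN_{n+1}^+$) coincides with the deformation condition defining $\cZ_n(x)\subset\cN_n$. The role of the hypothesis $h(x,x)\in\pi_0 O_{F_0}$ is to guarantee that the contribution of the extra factor $\langle f\rangle_F$ is trivially absorbed: without this $\pi$-divisibility, the condition coming from $\cZ_{n+1}(x)$ would be strictly stronger, and $\delta_x^+$ would fail to be surjective. Tracking this compatibility explicitly through the construction of the isogeny $\phi_0$ and the embedding $\delta^+$ is where the technical work lies.
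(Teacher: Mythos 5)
Your surjectivity step is in the right spirit and matches the paper's approach: the Dieudonn\'e module calculation indeed uses the hypothesis $h(x,x)\in\pi_0 O_{F_0}$ to force the image of $e$ to land in $\dM$, not merely in $\dL\obot O_{\breve F}e$.

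The formal-smoothness step, however, contains a genuine gap. You propose to show that both $\cZ_n(x)$ (target) and $\cZ_{n+1}(x)\cap\cN_n$ (source, in your indexing) are formally smooth over $\Spf O_{\breve F}$ and then argue via tangent spaces, mirroring the proof of Theorem~\ref{identify lower RZ space with special divisor}. But that theorem works precisely because $h(u,u)=\pi^2$ has valuation exactly $2$, so Corollary~\ref{N_n(u) is smooth} applies and $\cZ(u)^+$ is formally smooth. Under the weaker hypothesis $h(x,x)\in\pi_0 O_{F_0}$ (which permits arbitrary higher $\pi_0$-divisibility), Proposition~\ref{tangent space of special divisor} only controls the tangent space at points $z$ satisfying $z\notin\cZ(\pi^{-1}x)(\bar k)$. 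When $\pi_0^2\mid h(x,x)$ there exist points where that hypothesis fails, and there the tangent space of $\cZ(x)$ is larger: the special $\cZ$-cycles are \emph{not} formally smooth in general (their special fibers have singular or non-reduced components at higher divisibility — this is the basic reason the Kudla--Rapoport conjecture is nontrivial). The same applies to the intersection $\cZ_{n+1}(x)\cap\cN_n$. Moreover you invoke Proposition~\ref{tangent space of special divisor} (which is proved on the \emph{even}-dimensional space) to deduce smoothness of a cycle on the \emph{odd}-dimensional space, which is not what it says.

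The paper avoids this entirely: rather than trying to show source and target are formally smooth, it directly shows the \emph{morphism} $\delta_x^+$ is formally smooth using Grothendieck--Messing theory. Concretely, given a square-zero thickening $R\to S$, one must check that $\dD(x)_R$ preserves the Hodge filtration; this reduces (via the rank-one module $L_{\widetilde X}$ of \cite{LL22}*{Lemma~2.38} and \cite{How19}*{Proposition~4.1}) to checking that a single coefficient vanishes, which is done by reducing to the residue field and a Dieudonn\'e module computation in the two cases of Proposition~\ref{surj of delta}. That lifting argument is the missing ingredient, and replacing it with a tangent-space comparison of formally smooth schemes does not work.
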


\begin{proposition}[Proposition \ref{identification of Y-cycles}]
    The morphism $\gamma_x^+$ is an isomorphism for any nonzero $x\in\dV_n$.
\end{proposition}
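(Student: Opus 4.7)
The plan is to mirror the three-step strategy used for $\delta_x^+$ in Proposition \ref{identify special divisors} and for the isomorphism $\delta^+:\cN_n\isoarrow\cZ(u)^+$ in Theorem \ref{identify lower RZ space with special divisor}. First I would check that $\gamma_x^+$ is well-defined as a morphism into $\cY_{n+1}(x)\cap\cN_n$: given a test base $S\to\Spf\OFb$ and an $S$-point $(X,\iota,\lambda,\rho,\tilde x)$ of $\cY_n(x)$ with $\tilde x$ a lift of $\lambda_{\dX_n}\circ x$, the image $\delta^+(X,\iota,\lambda,\rho)\in\cN_{n+1}^+(S)$ inherits a canonical lift of $\lambda_{\dX_{n+1}}\circ x$ from $\tilde x$, by using the isogeny $\phi_0:\dX_{n+1}\to\dX_n\times\ubE$ to compare the polarizations on the corresponding formal $O_F$-modules. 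Since $x\in\dV_n$, its $\ubE$-component under the decomposition $\dV_{n+1}=\dV_n\obot\la f\ra_F$ vanishes, and the compatibility of $\phi_0$ with the polarizations forces this induced lift to be a lift of $\lambda_{\dX_{n+1}}\circ x$.

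Second, I would prove bijectivity of $\gamma_x^+$ on $\bar k$-points via Dieudonn\'e theory. Using the lattice description of $\cN_n(\bar k)$ and $\cN_{n+1}(\bar k)$ already deployed in the proofs of Propositions \ref{factor through} and \ref{surj of delta}, and the relation between the Dieudonn\'e modules of $X^{\#}$, $X$, and $E$ encoded by $\phi_0$, giving a lift of $\lambda_{\dX_{n+1}}\circ x$ on $\delta^+(y)$ is equivalent to giving a lift of $\lambda_{\dX_n}\circ x$ on $y$. The extra lifting datum in the $O_F\cdot f$-direction is already uniquely determined by the $\cZ(u)^+$-condition (i.e.\ by the fact that $\delta^+(y)$ lies in $\cN_n$), so no additional constraint is imposed on $y$ beyond membership in $\cY_n(x)$.

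Third, I would verify that both sides are formally smooth of the same relative dimension $n-2$ over $\Spf\OFb$. The $\cY$-cycle $\cY_n(x)$ is cut out in the formally smooth $\cN_n$ by a single non-zero-divisor (this is the standard regularity/flatness feature of $\cY$-divisors, in contrast to $\cZ$-divisors). Similarly $\cY_{n+1}(x)$ is a relative divisor in $\cN_{n+1}^+$, and after verifying that it meets the regularly embedded closed formal subscheme $\delta^+(\cN_n)$ properly inside $\cN_{n+1}^+$ (equivalently, that no irreducible component of $\cY_{n+1}(x)$ contains $\delta^+(\cN_n)$), its intersection with $\cN_n$ is again a divisor in $\cN_n$, of the required relative dimension. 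A standard formal-algebraic-geometry argument, as used in step (3) of the proof of Theorem \ref{identify lower RZ space with special divisor}, then upgrades the bijection on $\bar k$-points between formally smooth formal schemes of the same dimension to an isomorphism.

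The main obstacle I expect is step two: one must carefully track the lifting datum $\tilde x$ along the chain of isogenies encoded in $\phi_0$, a more delicate comparison than in Proposition \ref{surj of delta}, where only the underlying $p$-divisible groups are compared. A secondary technical point is the properness of the intersection $\cY_{n+1}(x)\cap\delta^+(\cN_n)$ needed in step three; this should be handled by a direct inspection of the special fiber, using that $x$ has nonzero image in $\dV_{n+1}$ and the explicit local equation of $\cY_{n+1}(x)$ near any geometric point of $\delta^+(\cN_n)$.
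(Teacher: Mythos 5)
Your steps 1 and 2 are on the right track and match the paper's argument in spirit: well-definedness of $\gamma_x^+$ comes from the commutative diagram built out of $\phi_0$, and surjectivity on geometric points comes from a Dieudonn\'e-module computation whose key point is that $x\obot u$ (with $u=\pi f$) forces $x(e)$ to lie in the orthogonal complement of $e$ inside $\dM^*$, which is precisely the smaller Dieudonn\'e lattice $\dL^*$. Your phrasing ``the extra lifting datum in the $O_F\cdot f$-direction is already uniquely determined'' is vaguer than the paper's clean orthogonality argument, but the idea is the same.

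Step 3, however, has a genuine gap, and here your strategy diverges from the paper's in a way that does not close. You propose to show that \emph{both formal schemes} $\cY_{n-1}(x)$ and $\cN_{n-1}\cap\cY_n(x)$ are formally smooth over $\Spf O_{\breve F}$ of the same relative dimension, and then invoke the differentials argument used in the proof of Theorem~\ref{identify lower RZ space with special divisor}. Neither smoothness assertion is available at this point. For the source, the claim that ``$\cY$-cycles are cut out by a single non-zero-divisor'' on odd-dimensional $\cN_{n-1}$ is not known a priori — as the paper's introductory remark makes explicit, the fact that $\cY_{n-1}(x)$ is a relative divisor is a \emph{consequence} of Proposition~\ref{identification of Y-cycles}, so using it as an input would be circular. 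For the target, even if you verify proper intersection of the two divisors $\cN_{n-1}\simeq\cZ_n(u)^+$ and $\cY_n(x)$ inside $\cN_n$, that controls the dimension, not the regularity: a proper intersection of two divisors in a smooth formal scheme need not be formally smooth, so you cannot run the $\widehat{\Omega}^1$-comparison from Theorem~\ref{identify lower RZ space with special divisor} on the result.

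The paper sidesteps this entirely by proving formal smoothness of the \emph{morphism} $\gamma_x^+$ directly via the Grothendieck--Messing lifting criterion: given a square-zero thickening $\Spec S\hookrightarrow\Spec R$ and compatible data, one tracks the image of $\operatorname{Fil}^1(\ucE_R)$ under $\dD(x)_R$ in $\Lie(\widetilde{X'}^\vee)$ and shows it must vanish, using the two-case description (from Proposition~\ref{surj of delta}) of the standard normal basis of $\dM^*$ adapted to $e$, together with injectivity of $\Lie(\widetilde\phi^\vee)$ on the relevant basis vectors. Since a closed immersion is automatically formally unramified, formal smoothness gives formal \'etaleness, which combined with surjectivity on geometric points gives the isomorphism with no prior smoothness input on either side. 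If you want to salvage your approach, you would have to first establish formal smoothness of $\cN_{n-1}\cap\cY_n(x)$ by a tangent-space computation analogous to Lemma~\ref{tangent space of special divisor} — essentially reproducing the work the paper does inside the lifting argument — so the two routes are not really of different difficulty.
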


In particular we get $\cY_n(x)\simeq\cY_{n+1}(x)\cap\cZ_{n+1}(u)^+\simeq\cY_{n+1}(x)\cap\cY_{n+1}(f)^+$. The same is true for $\delta^-$. Combine all the results we get $\mathrm{Int}_{n,\cY}(L)=\frac{1}{2}\mathrm{Int}_{n+1,\cY}(L^{\#})$ (Proposition \ref{geometric reduction}).

\begin{remark}
    By the identification Propositions we see that for $n\geq 3$ odd and nonzero $x\in\dV_n$, the cycle $\cY_n(x)$ is always a relative divisor on $\cN_n$, and $\cZ_n(x)$ is a relative divisor if $h(x,x)\in \pi_0 O_{F_0}$. As we shall see in Proposition \ref{N n-2=Z n-1 x}, when $h(x,x)=1$, $\cZ_n(x)$ is also a relative divisor. However when $h(x,x)\in O_{F_0}^{\times}\backslash \Nm O_F^{\times}$, $\cZ_n(x)$ is not well-behaved. This phenomenon causes problem if we want a Kudla--Rapoport formula for $\cZ$-cycles$\colon$we need nontrivial modification to the analytic side. This will be studied in a forthcoming paper.
\end{remark}

\subsection{Notation and terminology}
\begin{itemize}
    \item All rings are commutative and unital, and ring homomorphisms preserve units. For a ring $R$, we denote by $R^{\times}$ all its invertible elements.
    \item Let $R\to R'$ be a ring homomorphism and $M$ be an $R$-module. We put $M_{R'}\coloneqq M\otimes_R R'$.
	\item Let $p$ be an odd prime. Let $F_0$ be a finite extension of $\mathbb{Q}_p$ with residue field $k=\mathbb{F}_q$. Denote by $\bar{k}$ a fixed algebraic closure of $k$. Let $F$ be a ramified quadratic extension of $F_0$. Denote by $a\mapsto \bar{a}$ the (nontrivial) Galois involution of $F/F_0$. Let $\pi$ be a uniformizer of $F$ such that $\bar{\pi}=-\pi$. Let $\pi_0=\pi^2$, a uniformizer of $F_0$. Let $\breve F$ be the completion of the maximal unramified extension of $F$. Let $ O_F, \OFb$ be the ring of integers of $F,\Fb$ respectively.
	\item We say a scheme $S$ is a $\Spf O_{\breve{F}}$-scheme, if $S$ is a scheme over $\Spec O_{\breve{F}}$ and $\pi$ is locally nilpotent in $S$. For a $\Spf O_{\breve{F}}$-scheme $S$, we denote its special fibre by
	$$\ovS=S\times_{\Spec O_{\breve{F}}}\Spec\bar{k}.$$
	\item Throughout the paper, by an $O_F$-lattice $L$ inside an $F$-vector space $V$, without mentioning the rank of $L$ we always mean a lattice of full rank. A hermitian $O_F$-lattice $L$ is a pair $(L,h)$ where $L$ is an $O_F$-lattice equipped with an $O_{F_0}$-bilinear pairing $h(-,-)\colon L\times L\to F$ such that the induced $F$-valued pairing on $L\otimes_{ O_F}F$ is a nondegenerate hermitian pairing with respect to $F/F_0$. We shall simply write a hermitian $O_F$-lattice as $L$ without mentioning the pairing $h$ when it is clear from the context. We say a hermitian $O_F$-lattice $L$ is split (nonsplit) if the corresponding $F/F_0$-hermitian space $L\otimes_{O_F}F$ is a split (nonsplit) hermitian space.
    \item Let $R$ be a local ring. If $L$ and $M$ are two $R$-lattices of the same rank and $L\subset M$, we use the notation $L\subset^r M$ to indicate the co-length of $L$ in $M$ is $r$, and $L\subset^{\leq r}M$ to indicate that the co-length of $L$ in $M$ is smaller or equal to $r$.
    \item Let $\cO$ be a complete discrete valuation ring over $\dZ_p$. Let $S$ be a scheme over $\Spec \cO$. A pair $(X,\iota)$ consisting of a $p$-divisible group $X$ over $S$ and an action $\iota$ of $\cO$ on $X$ is called a strict $\cO$-module if the action of $\cO$ on $\Lie X$ is via the structure morphism $\cO\to\cO_S$. A strict $\cO$-module is called formal if the $p$-divisible group is formal. For simplicity we will omit 'strict' and just call them formal $\cO$-modules.
\end{itemize}

\subsection{Acknowledgement}
First, I would like to express my deep gratitude to Professor Chao Li, who suggested I consider the question, and had countless instructive conversations with me during this project, without which this article would never exist. Next, I would like to thank Qiao He and Baiqing Zhu for many helpful discussions. The author is supported by the Department of Mathematics at Columbia University in the city of New York.

\section{Rapoport--Zink spaces and special cycles}

\subsection{Rapoport--Zink space}
In this section, we review the definition and basic properties of Rapoport--Zink spaces and special cycles.

\begin{definition}\label{herm formal module}
	For any $\Spf O_{\breve{F}}$-scheme $S$, a hermitian formal $O_F$-module $(X,\iota,\lambda)$ of signature $(1,n-1)$ over $S$ is the following data$\colon$
	\begin{itemize}
		\item $X$ is a (strict) formal $O_{F_0}$-module over $S$ of dimension $n$ and relative height $2n$;
		\item $\iota$ is an action of $O_F$ on $X$ extending the $O_{F_0}$-action;
		\item $\lambda$ is an $\iota$-compatible polarization of $X$ such that $\operatorname{ker}\lambda\subset X[\iota(\pi)]$ is of rank $q^{2[\frac{n}{2}]}$. Here $\iota$-compatible means $\lambda\circ\iota(a)=\iota(\bar{a})^\vee\circ\lambda$ holds for every $a\in O_F$,
	\end{itemize}
	such that when $n$ is even, the triple $(X,\iota,\lambda)$ satisfies the following conditions$\colon$
	    \begin{itemize}
		\item [--](Kottwitz condition) 
		\begin{equation}\label{Kott cond}
		\begin{aligned}
		\text{the characteristic polynomial of }\iota(\pi)&\text{ on the }\cO_S\text{-module }\Lie(X)\text{ is }\\(T-\pi)(T+\pi)^{n-1}&\in\cO_S[T],
		\end{aligned}
		\end{equation}

		\item [--](Wedge condition)
		\begin{equation}\label{wedge cond}
		\bigwedge^2 \( \iota(\pi)+\pi\res\Lie(X) \)=0,
		\end{equation}
		
		\item [--](Spin condition) 
		\begin{equation}\label{spin cond}
		\text{for every geometric point }s\text{ of }S\text{, the action of }\iota(\pi)\text{ on }\Lie(X_s)\text{ is nonzero;}
		\end{equation}
	\end{itemize}
	and when $n$ is odd, we require the triple to satisfy condition \ref{odd spin cond} below$\colon$this condition is a little complicated to formulate and require some preparation; we postpone the formulation of the condition to $\S$\ref{condition for n odd}.
	
	An isomorphism of hermitian formal $O_F$-modules $(X,\iota,\lambda)\stackrel{\sim}{\lrarr}(X',\iota',\lambda')$ is an $O_F$-linear isomorphism $\varphi\colon X\stackrel{\sim}{\lrarr}X'$ such that $\varphi^*(\lambda')=\lambda$. To define the moduli problem, we fix an explicit choice of such triple $(\dX_n,\iota_{\mathbb{X}_n},\lambda_{\mathbb{X}_n})$ over $\bar{k}$ as the framing object in $\S$\ref{framing objects}.
\end{definition}

\begin{definition}
	Let $\operatorname{Nilp}_{O_{\breve{F}}}$ be the category of $O_{\breve{F}}$-schemes $S$ such that $\pi$ is locally nilpotent on $S$. Then the Rapoport--Zink space associated with $(\dX_n,\iota_{\mathbb{X}_n},\lambda_{\mathbb{X}_n})$ is the functor
	$$\cN_n\to\Spf O_{\breve{F}}$$
	sending $S\in\operatorname{Nilp}_{O_{\breve{F}}}$ to the set of isomorphism classes of tuples $(X,\iota,\lambda,\rho)$, where
	\begin{itemize}
		\item $(X,\iota,\lambda)$ is a hermitian formal $O_F$-module of signature $(1,n-1)$ over $S$;
		\item $\rho\colon X\times_S\ovS\to\dX_n\times_{\bar{k}}\ovS$ is an $O_F$-linear quasi-isogeny of height $0$ over $\ovS$ such that $\rho^*(\lambda_{\mathbb{X}_n,\ovS})=\lambda_{\ovS}$.
	\end{itemize}
\end{definition}

\begin{proposition}[\cite{RSZ18}*{Theorem 6.5, Theorem 7.3}]
	The functor $\mathcal{N}_n$ is (pro-)representable by a separated formal scheme, which is formally locally of finite type, essentially proper, and formally smooth of relative formal dimension $n-1$ over $\Spf O_{\breve{F}}$. In particular $\mathcal{N}_n$ is regular of formal dimension $n$.
\end{proposition}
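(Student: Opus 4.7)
The plan is to deduce the statement from the general Rapoport--Zink representability machinery, with the refinements needed to handle the ramified unitary setting. I would organize the proof in three stages.

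First, for pro-representability by a separated formal scheme that is formally locally of finite type and essentially proper, I would apply the foundational representability theorem of Rapoport--Zink for moduli problems of $p$-divisible groups with additional structure (polarization, $O_F$-action, framing quasi-isogeny) in the isogeny class of a fixed framing object $\dX_n$. The Kottwitz, wedge and (for $n$ odd) spin conditions are locally closed conditions on the formal scheme, so they cut out a closed formal subscheme, preserving these qualitative properties. Essential properness is a consequence of the fact that quasi-isogenies of height $0$ between the relevant formal $O_F$-modules over a complete local ring automatically extend; this is standard once the framing object is fixed.

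The substantive content is the formal smoothness of relative dimension $n-1$. Here I would pass to the local model via Grothendieck--Messing deformation theory$\colon$ the universal deformation of a point $(X,\iota,\lambda,\rho)\in\cN_n(\bar k)$ is governed by the Hodge filtration on the crystal $\mathbb{D}(X)$ modulo the framing, subject to the $\iota$-compatibility of $\lambda$ together with the Kottwitz, wedge, and spin constraints. This identifies the completed local ring of $\cN_n$ at a geometric point with the completed local ring of an explicit local model $M^{\mathrm{loc}}_n$ inside a Grassmannian over $\Spf \OFb$. For $n$ even, Pappas--Rapoport proved that the naive local model defined by (Kottwitz) $+$ (wedge) has two irreducible components, and that (spin) selects one of them, which is smooth of relative dimension $n-1$; this is the input used implicitly in \cite{LL22}. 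For $n$ odd one invokes the analogous analysis of \cite{RSZ18}, where the spin-type condition \ref{odd spin cond} (to be formulated in \S\ref{condition for n odd}) plays the same role of isolating a smooth component.

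The main obstacle is therefore precisely the verification that the chosen spin condition cuts out a formally smooth subscheme of the correct relative dimension inside the naive local model, both for $n$ even and for $n$ odd. Once smoothness over $\Spf\OFb$ is established, regularity of $\cN_n$ as a formal scheme of formal dimension $n$ is automatic because $\Spf\OFb$ is regular of dimension $1$, so the total space is regular of dimension $1+(n-1)=n$. Finally, I would note that the pro-representability, local finite type property and essential properness transfer without modification from the ambient RZ moduli problem to the closed formal subscheme cut out by the wedge and spin conditions, so the full statement follows by assembling these ingredients, which is exactly what \cite{RSZ18}*{Theorems 6.5 and 7.3} carry out.
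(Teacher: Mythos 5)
The paper does not prove this proposition; it is quoted verbatim from \cite{RSZ18}*{Theorems~6.5 and 7.3}. Your sketch correctly identifies the strategy used there: general Rapoport--Zink representability machinery gives the formal scheme structure, essential properness, and local finite type, while Grothendieck--Messing deformation theory reduces formal smoothness to the corresponding statement for the exotic smooth local model (Pappas--Rapoport for $n$ even, the spin-type condition \eqref{odd spin cond} analyzed in \cite{Smi15} and \cite{RSZ18} for $n$ odd), after which regularity follows since the base $\Spf O_{\breve F}$ is regular of dimension one.
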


When $n$ is even, there is a natural decomposition of $\cN_n$. We will need this in \S\ref{geom red section}. Let
\[
\dM=\dM_n
\quad\text{and}\quad
\dN=\dN_n \coloneqq \dM \otimes_{O_{\breve F_0}} \breve F_0
\]
denote the covariant relative Dieudonn\'e module and rational Dieudonn\'e module, respectively, of $\dX_n$.  The action $\iota_{\dX_n}$ makes $\dM$ into an $O_F \otimes_{O_{F_0}} O_{\breve F_0} = O_{\breve F}$-module. The polarization $\lambda_{\dX_n}$ induces a nondegenerate alternating $\breve F_0$-bilinear form \aform on $\dN$ satisfying
\[
\langle ax,y \rangle = \langle x, \bar{a} y \rangle
\quad\text{for all}\quad
x,y \in \dN,\ a \in \breve F.
\]
The form
\begin{equation*}
h(x,y) \coloneqq \langle \pi x, y \rangle + \pi\la x,y \rangle,
\quad
x,y \in \dN,
\end{equation*}
then makes $\dN$ into an $\breve F/\breve F_0$-hermitian space of dimension $n$.  By Dieudonn\'e theory, for a perfect field extension $K$ of $\bar{k}$, the set of $K$-points on $\cN_n$ identifies with a certain subset $\cS$ of $\pi$-modular (Definition \ref{defn for lattices}) $O_{\breve F} \otimes_{O_{\breve F_0}} W(K)$-lattices $M \subset \dN \otimes_{O_{\breve F_0}} W(K)$ (details in \S\ref{details of D modules}).
For a lattice $M \in \cS$, we say that the corresponding $K$-point on $\cN_n$ lies in $\cN_n^+$ or $\cN_n^-$ according as the $O_{\breve F} \otimes_{O_{\breve F_0}} W(K)$-length of the module
\begin{equation}\label{parity test lattice}
\bigl(M + \dM \otimes_{O_{\breve F_0}} W(K)\bigr) \big/ \dM \otimes_{O_{\breve F_0}} W(K)
\end{equation}
is even or odd.  The parity of this length may also be described as follows.  Since the $\pi$-modular lattices in a hermitian space are all conjugate under the unitary group, and since $\dM \otimes_{O_{\breve F_0}} W(K)$ is itself $\pi$-modular in $\dN \otimes_{O_{\breve F_0}} W(K)$, there exists some $g \in \operatorname{U}(\dN)(\breve F \otimes_{O_{\breve F_0}} W(K))$ such that $g\cdot (\dM \otimes_{O_{\breve F_0}} W(K)) = M$.  The determinant $\det g$ is a norm one element in $\breve F \otimes_{O_{\breve F_0}} W(K)$, and hence lies in $O_{\breve F} \otimes_{O_{\breve F_0}} W(K)$ with reduction mod $\pi$ equal to $\pm 1 \in K$.  Then the length of \eqref{parity test lattice} is even or odd according as this reduction is $1$ or $-1$ (independent of the choice of $g$) by \cite{RSZ17}*{Lemma 3.2}.

\begin{proposition}[\cite{RSZ18}*{Proposition 6.4}]\label{even n decomp lem}
	$\cN_n^+$ and $\cN_n^-$ define a decomposition
	\[
	\cN_n = \cN_n^+ \amalg \cN_n^-
	\]
	into open and closed formal subschemes.
\end{proposition}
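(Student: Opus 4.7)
The plan is to upgrade the set-theoretic decomposition $\cN_n(\bar k) = \cN_n^+(\bar k) \amalg \cN_n^-(\bar k)$ --- which is immediate from the parity definition, since each $\bar k$-point determines via Dieudonn\'e theory a unique $\pi$-modular lattice $M$ in $\dN \otimes_{O_{\breve F_0}} W(\bar k)$ and hence a unique parity of the $O_{\breve F}$-length of $(M+\dM)/\dM$ --- to a decomposition of formal schemes. The central task is to exhibit a locally constant $\{\pm 1\}$-valued function on $\cN_n$ whose restriction to $K$-points recovers the parity defined in \eqref{parity test lattice}.

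I would construct this function by working in families. Given $(X,\iota,\lambda,\rho)$ over a connected $S\in\operatorname{Nilp}_{O_{\breve F}}$, the quasi-isogeny $\rho$ together with the relative (crystalline) Dieudonn\'e theory yields a flat family of $\pi$-modular lattices in $\dN \otimes W(-)$ over the geometric points of $S$, and the parity at each such point is captured by $\det g \bmod \pi \in \{\pm 1\}$ for any $g\in\UG(\dN)(\breve F\otimes W(K))$ carrying $\dM\otimes W(K)$ to the fibre lattice. Rigidity of quasi-isogenies in $p$-adic families ensures that, modulo elements of norm-preserving units, $g$ is determined canonically by $\rho$; the induced sign is therefore well-defined on $S$ and, being $\{\pm 1\}$-valued, locally constant. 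Since $\cN_n$ is formally smooth over $\Spf O_{\breve F}$, the two preimages of $\pm 1$ are automatically open and closed, and together they exhaust $\cN_n$ by the point-wise statement, giving the desired decomposition.

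A conceptually cleaner alternative is to exploit the fact that the unitary group $\UG(\dN)$ has exactly two connected components, distinguished by the reduction of $\det$ modulo $\pi$, and that the set of $\pi$-modular lattices in $\dN$ is a single $\UG(\dN)$-orbit. The identity component acts on the framing lattice $\dM$ with orbit exactly those $M$ of even parity, giving the $\cN_n^+$-locus, and the non-identity component produces $\cN_n^-$. This orbit-theoretic picture transfers to the moduli interpretation of $\cN_n$ via the framing $\rho$, and openness of each stratum follows from the openness of the identity component inside $\UG(\dN)$.

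The main obstacle is the rigorous formulation of the sign in families: one must verify that the $\det \bmod \pi$ invariant is truly a well-defined section of the constant sheaf $\underline{\{\pm 1\}}$ on $\cN_n$, rather than merely a point-wise invariant. This is where one invokes the full force of crystalline Dieudonn\'e theory (or alternatively, the theory of affine Deligne--Lusztig varieties, whose $\pi_0$ is known to be governed by precisely the determinant character), as carried out in \cite{RSZ18}*{Proposition 6.4}.
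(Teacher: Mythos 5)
The paper gives no proof of this proposition: it is imported verbatim from \cite{RSZ18}*{Proposition 6.4}, so there is no in-text argument to compare against. Your sketch correctly identifies what is required---a $\{\pm 1\}$-valued invariant that is locally constant on $\cN_n$, not merely defined on geometric points---and correctly names $\det g \bmod \pi$ as that invariant. What it does not supply is the key step. The crystalline route asserts that relative Dieudonn\'e theory ``yields a flat family of $\pi$-modular lattices'' over a general connected $S\in\operatorname{Nilp}_{O_{\breve F}}$; but Dieudonn\'e modules are only available over perfect bases, so over an arbitrary $\Spf O_{\breve F}$-scheme one must instead work with the Dieudonn\'e crystal (say the Lie algebra $M(X)$ of the universal vector extension relative to $\cO_S$) and define the sign as the reduction modulo $\pi$ of a determinant of the comparison with $M(\dX_n)$ induced by $\rho$. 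The existence of such a section of the constant sheaf $\underline{\{\pm 1\}}$ on $S$ is precisely what is being claimed, and the appeal to ``rigidity of quasi-isogenies'' does not produce it: rigidity determines $\rho$ from its reduction but does not by itself give a trivialization of the crystal over $S$.

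The orbit-theoretic alternative has a parallel gap: the two cosets in $\operatorname{U}(\dN)(\breve F_0)$ are open in the $p$-adic topology of that group, but this does not by itself transfer to openness of the corresponding loci as formal subschemes of $\cN_n$; one still needs a Zariski-local argument (e.g.\ via the local model or the crystalline determinant above). You flag both difficulties honestly in the final paragraph and defer to \cite{RSZ18}, which is the same black-box use the present paper makes; so the write-up is a fair gloss on the statement, but the deferred step is the entire content of the proposition and is not established here.
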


\subsection{Condition for $n$ odd}\label{condition for n odd}

In this section we specify the condition for the triple $(X,\iota,\lambda)$ when $n$ is odd, and describe its relation with the Kottwitz condition, Wedge condition and Spin condition. We follow everything from \cite{RSZ18}*{\S7} and nothing new is proved here.

Since we will also need an analog of this condition in $\S$\ref{other condition for even} for even $n$, for the moment let $n$ be any positive integer. Let
\[
m \coloneqq  \lfloor n/2 \rfloor.
\]
Let $e_1,\dotsc,e_n$ denote the standard basis in $F^n$, and let $h$ be the standard split $F/F_0$-hermitian form on $F^n$ with respect to this basis,
\begin{equation}\label{herm form}
h(ae_i,be_j) \coloneqq  a \bar{b} \delta_{i,n+1-j} \quad\text{(Kronecker delta)}.
\end{equation}
Let \aform and \sform be the respective alternating and symmetric $O_{F_0}$-bilinear forms $F^n \times F^n \to F_0$ defined by
\begin{equation}\label{aform and sform}
\langle x,y \rangle \coloneqq  \frac 1 2 \tr_{F/F_0} \bigl(\pi^{-1}h(x,y)\bigr)
\quad\text{and}\quad
(x,y) \coloneqq  \frac 1 2 \tr_{F/F_0}h(x,y).
\end{equation}
For $i = bn + c$ with $0 \leq c < n$, define the $O_F$-lattice
\[
\Lambda_i \coloneqq  \sum_{j = 1}^c \pi^{-b-1}O_F e_j + \sum_{j = c+1}^n \pi^{-b} O_F e_j \subset F^n.
\]
For each $i$, the form \aform induces a perfect pairing
\[
\Lambda_i \times \Lambda_{-i} \to O_{F_0}.
\]
In this way, for fixed nonempty $I \subset \{0,\dotsc,m\}$, the set
\[
\Lambda_I \coloneqq  \{\, \Lambda_i \mid i \in \pm I + n\dZ \, \}
\]
forms a polarized chain of $O_F$-lattices over $O_{F_0}$ in the sense of \cite{RZ96}*{Definition 3.14}.

Now define the $2n$-dimensional $F$-vector space
\begin{equation*}
V \coloneqq  F^n \otimes_{F_0} F,
\end{equation*}
where $F$ acts on the right tensor factor.
The $n$-th wedge power $\tensor*[^n] V {} \coloneqq  \bigwedge_F^n V$ admits a canonical decomposition
\begin{equation}\label{nV decomp}
\tensor*[^n] V {} = 
\bigoplus_{\substack{r+s = n\\\epsilon \in \{\pm1\}}} \tensor*[^n]{V}{_\epsilon^{r,s}}
\end{equation}
which is described in \cite{Smi15}*{\S2.3, 2.5}.%
\footnote{Here and below we replace the symbol $W$ used in loc.\ cit.\ with $\tensor*[^n] V{}$.}
Let us briefly review it.  The operator $\pi \otimes 1$ acts $F$-linearly on $V$ with eigenvalues $\pm \pi$; let
\[
V = V_\pi \oplus V_{-\pi}
\]
denote the corresponding eigenspace decomposition. For a partition $r + s = n$, define%
\footnote{Here and below we interchange $r$ and $s$ in the notation relative to \cite{Smi15}.}
\[
\tensor*[^n]V{^{r,s}} \coloneqq  \sideset{}{_F^r}\bigwedge V_\pi \otimes_F \sideset{}{_F^s}\bigwedge V_{-\pi},
\]
which is naturally a subspace of $\tensor*[^n] V{}$.  Furthermore, the symmetric form \sform splits after base change to $V$, and therefore there is a decomposition
\begin{equation}\label{^nV decomp}
\tensor*[^n] V{} = \tensor*[^n] V{_1} \oplus \tensor*[^n] V{_{-1}}
\end{equation}
as an $\SO(\sform)(F)$-representation.  The subspaces $\tensor*[^n]V{_{\pm 1}}$ have the property that for any Lagrangian (i.e.\ totally isotropic $n$-dimensional) subspace $\cF \subset V$, the line $\bigwedge_F^n \cF \subset \tensor*[^n] V{}$ is contained in one of them, and in this way they distinguish the two connected components of the orthogonal Grassmannian $\OGr(n,V)$ over $\Spec F$.  The subspaces $\tensor*[^n]V{_{\pm 1}}$ are canonical up to labeling, and we will follow the labeling conventions in loc.\ cit., to which we refer the reader for details.  The summands in the decomposition \eqref{nV decomp} are then given by
\[
\tensor*[^n] V{_\epsilon^{r,s}} \coloneqq  \tensor*[^n] V{^{r,s}} \cap \tensor*[^n] V {_\epsilon}
\]
(intersection in $\tensor*[^n] V{}$) for $\epsilon \in \{\pm 1\}$.

Given an $O_F$-lattice $\Lambda \subset F^n$, now define
\[
\tensor*[^n]\Lambda{} \coloneqq  \sideset{}{_{O_F}^n} \bigwedge (\Lambda \otimes_{O_{F_0}} O_F),
\]
which is naturally a lattice in $\tensor*[^n]V{}$.  For fixed $r$, $s$, and $\epsilon$, define
\begin{equation}\label{^nLambda_epsilon^r,s}
\tensor*[^n]\Lambda{_{\epsilon}^{r,s}} \coloneqq  \tensor*[^n]\Lambda{}\cap \tensor*[^n] V{_\epsilon^{r,s}}
\end{equation}
(intersection in $\tensor*[^n] V{}$).  Then $\tensor*[^n]\Lambda{_{\epsilon}^{r,s}}$ is a direct summand of $\tensor*[^n]\Lambda{}$, since the quotient $\tensor*[^n]\Lambda{} / \tensor*[^n]\Lambda{_{\epsilon}^{r,s}}$ is torsion-free.  For an $O_F$-scheme $S$, define
\begin{equation}\label{L def}
L_{i,\epsilon}^{r,s}(S) \coloneqq  
\operatorname{im} \bigl[ \tensor*[^n]{(\Lambda_i)}{_{\epsilon}^{r,s}} \otimes_{O_F} \cO_S \rightarrow \tensor*[^n]\Lambda{_i} \otimes_{O_F} \cO_S \bigr].
\end{equation}
For nonempty $I \subset \{0,\dotsc,m\}$, let $\underline{\operatorname{Aut}}(\Lambda_I)$ denote the scheme of automorphisms of the polarized $O_F$-lattice chain $\Lambda_I$ over $\Spec O_{F_0}$, in the sense of \cite{RZ96}*{Theorem 3.16} or \cite{Pap00}*{Page 581} (this is denoted by $\cP$ in \cite{Pap00}).

\begin{lemma}[\cite{RSZ18}*{Lemma 7.1}]\label{L stability}
	For any $O_F$-scheme $S$ and $\Lambda_i \in \Lambda_I$, the submodule $L_{i,\epsilon}^{r,s}(S) \subset \tensor*[^n]\Lambda{_i} \otimes_{O_F} \cO_S$ is stable under the natural action of $\underline{\operatorname{Aut}}(\Lambda_I)(S)$ on $\tensor*[^n]\Lambda{_i} \otimes_{O_F} \cO_S$.
\end{lemma}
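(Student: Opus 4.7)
The plan is to reinterpret the stability condition as equality of a closed subgroup scheme of $\underline{\Aut}(\Lambda_I)$ with the ambient scheme, reduce to the generic fibre by flatness, and finish using the canonical nature of the decomposition \eqref{nV decomp}.

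First, $L_{i,\epsilon}^{r,s}(S)$ is by construction the image of the fixed $O_F$-submodule $\tensor*[^n]{(\Lambda_i)}{_\epsilon^{r,s}} \subset \tensor*[^n]\Lambda{_i}$ under base change to $\cO_S$, so its formation is functorial in $S$. Hence it suffices to show that the subfunctor
\[
\mathcal{S}\colon R \longmapsto \bigl\{\, g \in \underline{\Aut}(\Lambda_I)(R) \bigm| g\bigl(\tensor*[^n]{(\Lambda_i)}{_\epsilon^{r,s}} \otimes_{O_F} R\bigr) \subset \tensor*[^n]{(\Lambda_i)}{_\epsilon^{r,s}} \otimes_{O_F} R \,\bigr\}
\]
coincides with $\underline{\Aut}(\Lambda_I)$. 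As noted just before the lemma, $\tensor*[^n]{(\Lambda_i)}{_\epsilon^{r,s}}$ is a direct summand of $\tensor*[^n]\Lambda{_i}$, so the stability condition translates into the vanishing of the composite
\[
\tensor*[^n]{(\Lambda_i)}{_\epsilon^{r,s}} \xrightarrow{g} \tensor*[^n]\Lambda{_i} \twoheadrightarrow \tensor*[^n]\Lambda{_i} \big/ \tensor*[^n]{(\Lambda_i)}{_\epsilon^{r,s}},
\]
an $O_F$-linear map between finite free modules. Consequently $\mathcal{S}$ is represented by a closed subscheme (indeed, a closed subgroup scheme) of $\underline{\Aut}(\Lambda_I)$.

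Second, $\underline{\Aut}(\Lambda_I)$ is smooth over $\Spec O_{F_0}$ by \cite{RZ96}*{Theorem 3.16}, so in particular its coordinate ring is $\pi_0$-torsion-free. Any closed subscheme that agrees with $\underline{\Aut}(\Lambda_I)$ on the generic fibre must therefore equal the whole ambient scheme: its defining ideal would consist entirely of $\pi_0$-torsion in a torsion-free ring, hence be zero. This reduces the problem to checking stability on $F_0$-points.

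Finally, an $F_0$-point $g$ of $\underline{\Aut}(\Lambda_I)$ is an $F$-linear unitary (similitude) automorphism of $(F^n, h)$. Its base change $g \otimes \id$ to $V = F^n \otimes_{F_0} F$ commutes with $\pi \otimes 1$ (by $F$-linearity on the right tensor factor), so preserves the eigenspaces $V_{\pm \pi}$ and hence each summand $\tensor*[^n]V{^{r,s}}$. Identifying $V_\pi \cong F^n$ and $V_{-\pi}$ with its $F$-conjugate, a direct calculation gives $\det_F(g \otimes \id) = \det(g) \cdot \overline{\det(g)} = \Nm_{F/F_0}(\det g) = \mu(g)^n$; combined with the fact that $F^\times$-scalars act trivially on $\OGr(n, V)$, this forces $g \otimes \id$ to act on $\OGr(n, V)$ preserving its two connected components, and so to preserve the labeling of $\tensor*[^n]V{} = \tensor*[^n]V{_1} \oplus \tensor*[^n]V{_{-1}}$ fixed in \cite{Smi15}*{\S2.3, 2.5}. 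Therefore $g$ preserves each $\tensor*[^n]V{_\epsilon^{r,s}}$, and hence preserves the intersection $\tensor*[^n]{(\Lambda_i)}{_\epsilon^{r,s}} = \tensor*[^n]\Lambda{_i} \cap \tensor*[^n]V{_\epsilon^{r,s}}$, as required.

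The main obstacle I expect is the last verification: the $\epsilon$-labeling, unlike the eigenspace decomposition, depends on a subtle orientation convention on a split orthogonal space, and the invariance under $g$ ultimately rests on the relation between $\det_F(g|_V)$ and the hermitian similitude character. Once this norm calculation is handled, the remaining ingredients are formal applications of flatness and the functoriality built into the definition of $L_{i,\epsilon}^{r,s}$.
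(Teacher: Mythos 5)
Your proposal is correct and reproduces, in essence, the argument of \cite{RSZ18}*{Lemma~7.1}, which is what the paper cites: stability is a closed condition on $\underline{\Aut}(\Lambda_I)$, the latter is smooth (in particular flat) over $\Spec O_{F_0}$ by \cite{RZ96}*{Theorem~3.16}, so the defining ideal of the stability locus is $\pi_0$-torsion in a torsion-free ring and hence zero once equality holds on the generic fibre; and over $F_0$ the summands $\tensor*[^n]V{_\epsilon^{r,s}}$ are visibly preserved because $g\otimes\id$ commutes with $\pi\otimes 1$ and lies in $\operatorname{GSO}(\sform_V)$, the latter via the norm identity $\det{}_F(g\otimes\id)=\Nm_{F/F_0}(\det g)=\mu(g)^n$. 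One small imprecision: the flatness step reduces you to showing the two closed subschemes agree as schemes over $F_0$, not merely on $F_0$-rational points; but since the generic fibre of $\underline{\Aut}(\Lambda_I)$ is reduced and your determinant computation works verbatim over any $F_0$-algebra (or on $\bar F_0$-points), this is only a matter of phrasing and does not affect the argument.
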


This concludes our discussion for general $n$.  We now formulate our condition on the triple $(X,\iota,\lambda)$ over a $\Spf O_{\breve F}$-scheme $S$ in the case of odd $n$, which will make use of the above discussion in the case $I = \{m\}$. Let $M(X)$ and $M(X^\vee)$ denote the respective Lie algebras of the universal vector extensions of $X$ and $X^\vee$.  Since $\ker\lambda \subset X[\iota(\pi)]$, there is a unique (necessarily $O_F$-linear) isogeny $\lambda'$ such that the composite
\[
X \xra{\lambda} X^\vee \xra{\lambda'} X
\]
is $\iota(\pi)$.  Since $\ker\lambda$ furthermore has rank $q^{n-1}$, the induced diagram
\[
M(X) \xra{\lambda_*} M(X^\vee) \xra{\lambda'_*} M(X)
\]
extends periodically to a polarized chain of $O_F \otimes_{O_{F_0}} \cO_S$-modules of type $\Lambda_{\{m\}}$, in the terminology of \cite{RZ96}.  By Theorem 3.16 in loc.\ cit., \'etale-locally on $S$ there exists an isomorphism of polarized chains
\begin{equation}\label{chain triv}
[{}\dotsb \xra{\lambda'_*} M(X) \xra{\lambda_*} M(X^\vee) \xra{\lambda'_*} \dotsb{}] \stackrel{\sim}{\lrarr} \Lambda_{\{m\}} \otimes_{O_{F_0}} \cO_S,
\end{equation}
which in particular gives an isomorphism of $O_F \otimes_{O_{F_0}} \cO_S$-modules
\begin{equation}\label{M(X) triv}
M(X) \stackrel{\sim}{\lrarr} \Lambda_{-m} \otimes_{O_{F_0}} \cO_S.
\end{equation}
The module $M(X)$ fits into the covariant Hodge filtration
\[
0 \to \Fil^1 \to M(X) \to \Lie X \to 0
\]
for $X$, and the condition we finally impose is that
\begin{align*}
&\text{\em upon identifying $\Fil^1$ with a submodule of $\Lambda_{-m} \otimes_{O_{F_0}} \cO_S$ via \eqref{M(X) triv}, the line bundle}\\
\shortintertext{
	\begin{equation}\label{odd spin cond}
	\sideset{}{_{\cO_S}^n}\bigwedge \Fil^1 \subset  \tensor[^n]\Lambda{_{-m}} \otimes_{O_F} \cO_S
	\end{equation}
}
&\text{\em is contained in $L_{-m,-1}^{n-1,1}(S)$.}
\end{align*}
Note that Lemma \ref{L stability} gives exactly what is needed to conclude that condition \eqref{odd spin cond} is independent of the choice of chain isomorphism in \eqref{chain triv}.

\begin{remark}\label{odd N_n crit}
	The Kottwitz condition \eqref{Kott cond}, Wedge condition \eqref{wedge cond}, and Spin condition \eqref{spin cond} all continue to make sense as written in the odd ramified setting.  The first two of these conditions are implied by condition \eqref{odd spin cond}, cf.~\cite{Smi15}*{Lemma~5.1.2, Remark~5.2.2} (which shows that these implications hold on the local model).  On the other hand, let $\cN_n^\circ$ be the moduli space of quadruples $(X,\iota,\lambda,\rho)$ as in the definition of $\cN_n$, except that instead of imposing condition \eqref{odd spin cond}, we impose conditions \eqref{wedge cond} and \eqref{spin cond}.  Then \eqref{wedge cond} and \eqref{spin cond} imply condition \eqref{odd spin cond}, and in this way $\cN_n^\circ$ is an open formal subscheme of $\cN_n$.  Indeed, this statement follows from the analogous statement for the corresponding local models, which is explained in \cite{Smi15}*{\S3.3}.   (More precisely, loc.~cit.~shows that the local model for $\cN_n^\circ$ is the complement of the ``worst point'' in the local model for $\cN_n$.)
\end{remark}

\begin{remark}
	There is a natural analog of condition \eqref{odd spin cond} on $\cN_n$ when $n$ is even (still with $F/F_0$ ramified).  However this analog is automatically satisfied on the whole space, which follows from the fact that this condition is automatically satisfied in the generic fiber of the local model for $\cN_n$, and this local model is already flat (in fact smooth).
\end{remark}

\subsection{Framing objects}\label{framing objects}
To complete the definition of $\cN_n$, it remains to specify a framing object $(\dX_n,\iota_{\mathbb{X}_n},\lambda_{\dX_n})$ for this moduli problem over $\bar{k}$.

We denote by $\mathbb{E}$ the unique formal $O_{F_0}$-module of dimension 1 and relative height 2 over $\bar{k}$. The Dieudonn\'e module $\mathbb{M}_0$ of $\mathbb{E}$ can be identified with $W_{O_{F_0}}(\bar{k})^2=O_{\breve{F}_0}^2$ endowed with the Frobenius and Verschiebung operator given in matrix form by
\begin{equation}\label{Frob on E}
F=\begin{pmatrix}
0&\pi_0\\1&0
\end{pmatrix}\sigma\text{ , }
V=\begin{pmatrix}
    0&\pi_0\\1&0
\end{pmatrix}\sigma^{-1}
\end{equation}
where $\sigma$ denotes the usual Frobenius homomorphism on the Witt vectors. To give a polarization on $\mathbb{E}$ is to give an alternating bilinear pairing on $\mathbb{M}_0$ with associated matrix of the form
$$\begin{pmatrix}
0&\delta\\-\delta&0
\end{pmatrix}$$
for $\delta\in O_{\breve{F}_0}$ satisfying $\sigma(\delta)=-\delta$. We define the principal polarization $\lambda_{\mathbb{E}}$ by fixing any such $\delta\in O_{\breve{F}_0}^\times$ once and for all. Note that any other principal polarization of $\mathbb{E}$ differs from $\lambda_{\mathbb{E}}$ by an $O_{{F}_0}^\times$-multiple.

Let $\iota_{\mathbb{E}}$ be an embedding
$$\iota_{\mathbb{E}}\colon O_F\hookrightarrow \operatorname{End}_{O_{F_0}}(\mathbb{E})$$
which makes $(\mathbb{E},\iota_{\dE},\lambda_{\dE})$ into a hermitian formal $O_F$-module of signature $(1,0)$ over $\bar{k}$. Explicitly we take
$$\iota_{\mathbb{E}}(a+b\pi)\coloneqq \begin{pmatrix}
a & b\pi_0 \\
b & a
\end{pmatrix},\quad a,b\in O_{F_0}$$
acting on the Dieudonn\'e module.

We denote by $(\ubE,\iota_{\ubE},\lambda_{\ubE})$ the same object as $(\mathbb{E},\iota_{\dE},\lambda_{\dE})$, except where the $O_F$-action $\iota_{\ubE}$ is given by the composition of $\iota_{\mathbb{E}}$ and the Galois involution on $F$.

\begin{itemize}

\item [$\bullet$] When $n$ is even, up to $O_F$-linear quasi-isogeny compatible with polarizations, there is a unique hermitian formal $O_F$-module $(\dX_n,\iota_{\mathbb{X}_n},\lambda_{\dX_n})$ over $\bar{k}$, which follows from Proposition 3.1 and its proof in \cite{RSZ17}.

When $n = 2$, we take
\[
\dX_2 \coloneqq\dE \times \dE
\]
as a formal $O_{F_0}$-module, and we define $\iota_{\dX_2}$ by
\[
\iota_{\dX_2}(a + b\pi)\coloneqq
\begin{pmatrix}
a  &  b\pi_0\\
b  &  a
\end{pmatrix},
\quad
a,b \in O_{F_0}.
\]
(This identifies $\dX_2$ with the Serre tensor construction $O_F \otimes_{O_{F_0}} \dE$.)  For the polarization, we take
\begin{equation}\label{lambda_BX_2}
\lambda_{\dX_2} \coloneqq 
\begin{pmatrix}
-2\lambda_\dE\\
&  2\pi_0 \lambda_\dE
\end{pmatrix}.
\end{equation}
We then take
\begin{equation}\label{frame}
\begin{split}
\dX_n &\coloneqq  \dX_2 \times \ubE^{n-2},\\
\iota_{\dX_n} &\coloneqq  \iota_{\dX_2} \times \iota_{\ubE}^{n-2},\\
\lambda_{\dX_n} &\coloneqq  \lambda_{\dX_2} \times 
\diag \biggl(
\underbrace{\begin{pmatrix} 
	0  &  \lambda_{\ubE}\, \iota_{\ubE}(\pi)\\
	-\lambda_{\ubE}\, \iota_{\ubE}(\pi)  &  0
	\end{pmatrix}
	,\dotsc,
	\begin{pmatrix} 
	0  &  \lambda_{\ubE}\, \iota_{\ubE}(\pi)\\
	-\lambda_{\ubE}\, \iota_{\ubE}(\pi)  &  0
	\end{pmatrix}}_{(n-2)/2 \text{ times}}\biggr).
\end{split}
\end{equation}

It is straightforward to see that the framing object satisfies the Kottwitz condition, Wedge condition and Spin condition. 

\item [$\bullet$] When $n$ is odd, in contrast to the previous case, a triple $(\dX_n,\iota_{\dX_n},\lambda_{\dX_n})$ of hermitian formal $O_F$-module over $\bar{k}$ is \textit{not unique up to quasi-isogeny}. In fact, there are two isogeny classes (as always, up to $O_F$-linear quasi-isogeny compatible with the polarizations), corresponding to the two possible isometry classes of the hermitian space $C$ in the proof of \cite{RSZ17}*{Proposition~3.1}.\footnote{The hermitian space $C$ is isomorphic to the hermitian space of quasi-homomorphisms $\dV_n$ defined in \eqref{space of special homs}.}

As an explicit representative for which $C$ is nonsplit, we take the same framing object as in \cite{RSZ17}.  Thus when $n = 1$ we define
\begin{equation}\label{BX_1}
\dX_1^{(1)} \coloneqq  \dE,
\quad
\iota_{\dX_1^{(1)}} \coloneqq  \iota_\dE,
\quad\text{and}\quad
\lambda_{\dX_1^{(1)}} \coloneqq  -\lambda_\dE.
\end{equation}

When $n \geq 3$, we take 
\begin{equation}\label{BX_n odd}
\dX_n^{(1)} \coloneqq  \dX_{n-1} \times \ubE,
\quad
\iota_{\dX_n^{(1)}} \coloneqq  \iota_{\dX_{n-1}} \times \iota_{\ubE},
\quad\text{and}\quad
\lambda_{\dX_n^{(1)}} \coloneqq  \lambda_{\dX_{n-1}} \times \lambda_{\ubE};
\end{equation}
here $n-1$ is even and $(\dX_{n-1},\iota_{\dX_{n-1}},\lambda_{\dX_{n-1}})$ is as defined in \eqref{frame}. 

To fix a framing object in the other isogeny class, we fix $\epsilon\in O_{F_0}^\times\smallsetminus \operatorname{Nm} F^\times$ and define
\begin{equation}\label{BX_n^{(0)}}
\dX_n^{(0)} \coloneqq  \dX_n^{(1)},
\quad
\iota_{\dX_n^{(0)}} \coloneqq  \iota_{\dX_n^{(1)}},
\quad\text{and}\quad
\lambda_{\dX_n^{(0)}} \coloneqq  \epsilon\lambda_{\dX_n^{(1)}}.
\end{equation}
Note that such an $\epsilon$ exists since $F/F_0$ is ramified.

Taking $\dX_n^{(0)}$ and $\dX_n^{(1)}$ as the framing objects, we obtain respective moduli spaces  $\cN_n^{(0)}$ and $\cN_n^{(1)}$. However, these spaces are isomorphic via the map


\[
\xymatrix@R=0ex{
	\cN_n^{(1)}\quad \ar[r]^-\sim  & \quad \cN_n^{(0)}\\
	(X, \iota, \lambda, \rho) \ar@{|->}[r]  &  \bigl(X, \iota, \lambda\circ\iota(\epsilon), \rho\bigr).
}
\]
To simplify notation in the rest of the paper, from now on we set
\[
\cN_n \coloneqq  \cN_n^{(1)}
\quad\text{and}\quad
\dX_n \coloneqq  \dX_n^{(1)}.
\]

We note that when $n \geq 3$, the framing objects $\dX_n$ obviously satisfies \eqref{wedge cond} and \eqref{spin cond} (because $\dX_{n-1}$ does), and therefore they indeed satisfy \eqref{odd spin cond}; and it is trivial to check directly that $\dX_1$ satisfies \eqref{odd spin cond} when $n=1$.

\end{itemize}

\begin{example}[$n=1$]\label{CN_1}
	When $n = 1$, the moduli problem for $\cN_1$ is just the moduli problem of lifting $\dE$ as a formal $O_F$-module.  By the theory of canonical lifting (\cite{Gr86}), we have $\cN_1 = \Spf O_{\breve F}$, with universal object the canonical lift $(\cE, \iota_\cE, -\lambda_\cE, \rho_\cE)$.  (In this case condition \eqref{odd spin cond} is redundant in the moduli problem.)
\end{example}

\subsection{Special cycles}

We define the space of \textit{special quasi-homomorphisms}
\begin{equation}\label{space of special homs}
\mathbb{V}_n=\dV(\dX_n)\coloneqq \operatorname{Hom}_{O_F}(\ubE,\mathbb{X}_n)\otimes_{O_{F}}F
\end{equation}
and for $x,y\in\mathbb{V}_n$, define $h(x,y)$ to be the composition
$$\ubE\stackrel{x}{\lrarr}\mathbb{X}_n\stackrel{\lambda_{\mathbb{X}_n}}{\lrarr}\mathbb{X}_n^\vee\stackrel{y^\vee}{\lrarr}\ubE^\vee\stackrel{\lambda_{\ubE}^{-1}}{\lrarr}\ubE$$
which lies in $$\operatorname{End}_{O_F}(\ubE)\otimes_{O_F}F\stackrel{\iota_{\ubE}^{-1}}{\simeq}F.$$

It turns out that $h$ defines a hermitian form on $\mathbb{V}_n$ which makes $(\mathbb{V}_n,h)$ a nondegenerate and nonsplit hermitian space. (\cite{RSZ17}*{Lemma 3.5}, note that for $n$ odd we take $\dX_n=\dX_n^{(1)}$.)

We denote by $\mathcal{E}$ the canonical lift of $\mathbb{E}$ over $\operatorname{Spf}O_{\breve{F}}$ with respect to $\iota_{\mathbb{E}}$, equipped with its $O_F$-action $\iota_{\mathcal{E}}$, $O_F$-linear framing isomorphism $\rho_{\mathcal{E}}\colon \mathcal{E}_{\bar{k}}\stackrel{\sim}{\to}\mathbb{E}$ and principal polarization $\lambda_{\mathcal{E}}$ lifting $\rho_{\mathcal{E}}^*(\lambda_{\mathbb{E}})$. We denote by $(\ucE,\iota_{\ucE},\lambda_{\ucE},\rho_{\ucE})$ the same object but with $O_F$-action twisted by the Galois involution on $F$.

\begin{definition}\label{z and y cycles}
	For every nonzero $x\in\mathbb{V}_n$, we define the special $\cZ$-cycle $\cZ(x)=\cZ_n(x)$ of $\mathcal{N}_n$ to be the maximal closed formal subscheme over which the quasi-homomorphism
	$$\rho^{-1}\circ x\colon \ubE\times_{\bar{k}} \ovS\to X\times_S \ovS$$
	extends to a homomorphism $\ucE_S\to
	X$.
	
	We also define the special $\cY$-cycle $\cY(x)=\cY_n(x)$ of $\cN_n$ to be the maximal closed formal subscheme over which the quasi-homomorphism
	$$\lambda\circ\rho^{-1}\circ x\colon \ubE\times_{\bar{k}} \ovS\to X\times_S \ovS\to X^{\vee}\times_S\ovS$$
	extends to a homomorphism $\ucE_S\to
	X^{\vee}$.
\end{definition}

By Grothendieck--Messing theory, $\cZ(x)$ and $\cY(x)$ are closed formal subschemes. It is proved that when $n$ is even, $\cZ(x)$ is a relative divisor (\cite{LL22}*{Lemma 2.40}).

\begin{definition}
	For any $O_F$-lattice $L\subset\mathbb{V}_n$, we define the arithmetic intersection numbers for $\cZ$-cycles and $\cY$-cycles. 
	
	For $n$ even, the Serre intersection multiplicity
	$$\chi\left(\cO_{\cZ_n(x_1)}\otimes^{\mathbb{L}}_{\cO_{\mathcal{N}_n}}\cdots\otimes^{\mathbb{L}}_{\cO_{\mathcal{N}_n}}\cO_{\cZ_n(x_n)}\right)$$
	does not depend on the choice of a basis $\{x_1,\dots,x_n\}$ of $L$ by \cite{LL22}*{Corollary 2.35}, which we define to be $\operatorname{Int}_{n,\cZ}(L)$.
	
	For general $n$, the Serre intersection multiplicity
	$$\chi\left(\cO_{\cY_n(x_1)}\otimes^{\mathbb{L}}_{\cO_{\mathcal{N}_n}}\cdots\otimes^{\mathbb{L}}_{\cO_{\mathcal{N}_n}}\cO_{\cY_n(x_n)}\right)$$
	does not depend on the choice of a basis $\{x_1,\dots,x_n\}$ of $L$ by Proposition \ref{y=pi z even} and Proposition \ref{geometric reduction}, which we define to be $\operatorname{Int}_{n,\cY}(L)$.
\end{definition}

For $n\geq 1$, let $g\mapsto g^{\dagger}$ denote the Rosati involution on $\operatorname{End}_{O_F}^{\circ}(\mathbb{X}_n)$ induced by $\lambda_{\mathbb{X}_n}$. Define
$$U(\mathbb{X}_n)\coloneqq \left\{ g\in\operatorname{End}_{O_F}^{\circ}(\mathbb{X}_n) \res gg^{\dagger}=\operatorname{id}_{\mathbb{X}_n}\right\}.$$
Thus $U(\mathbb{X}_n)$ is the group of $O_F$-linear self-quasi-isogenies of $\mathbb{X}_n$ which preserve $\lambda_{\mathbb{X}_n}$ on the nose.

The group $U(\mathbb{X}_n)$ acts naturally from the left on $\mathbb{V}_n$, and in this way identifies with the unitary group $U(\mathbb{V}_n,h)$. For any $\beta\in O_{F_0}^{\times}$, the group $U(\mathbb{X}_n)\simeq U(\mathbb{V}_n)$ acts transitively on the subset of elements in $\mathbb{V}_n$ whose hermitian norm is $\beta$.

On the other hand, each $g$ in $U(\mathbb{X}_n)$ is a quasi-isogeny of height $0$, and therefore $U(\mathbb{X}_n)$ acts naturally on $\mathcal{N}_n$ on the left via the rule $g\cdot (X,\iota,\lambda,\rho)=(X,\iota,\lambda,g\rho)$. It is easy to see that under this action, for any $x\in\mathbb{V}_n$ and $g\in U(\mathbb{X}_n)\simeq U(\mathbb{V}_n)$ we have \begin{equation}\label{action of unitary group}
    g\cdot\cZ_n(x)=\cZ_n(gx)\quad\text{ and }\quad g\cdot\cY_n(x)=\cY_n(gx).
\end{equation}

\section{Local density and Kudla--Rapoport conjecture}

\subsection{Local density}
	In this section we study local representation densities of hermitian lattices. We state all results for $F/F_0$-hermitian forms, although for $\breve{F}/\Breve{F}_0$-hermitian forms everything works as well. We first introduce some notions about hermitian $O_F$-lattices.

\begin{definition}\label{defn for lattices}
	Let $(V,h)$ be a hermitian space over $F$ of dimension $m$.
	\begin{itemize}
		\item [(1)] For an $ O_F$-lattice $L$ of $V$, define
		$$\begin{aligned}
		L^\vee&\coloneqq \{x\in V\res h(x,y)\in \pi^{-1} O_F\text{ for every }y\in L\},\\
		L^*&\coloneqq \{x\in V \res h(x,y)\in O_F\text{ for every }y\in L\}.
		\end{aligned}$$
		We have $L^*=\pi L^{\vee}$. We say $L$ is integral if $L\subset L^\vee$ and self-dual if $L=L^\vee$. When $m$ is even, we say $L$ is $\pi$-modular if $L^{*}=\pi^{-1}L$, and when $m$ is odd, we say $L$ is almost $\pi$-modular if $L\subset L^{*}\subset^1 \pi^{-1}L$.
		\item [(2)] For an integral $ O_F$-lattice $L$ of $V$, we define
		\begin{itemize}
			\item the fundamental invariants of $L$ to be the unique integers $0\leq a_1\leq\cdots\leq a_m$ such that $L^\vee/L\simeq  O_F/(\pi^{a_1})\oplus\cdots\oplus  O_F/(\pi^{a_m})$ as $ O_F$-modules;
			\item the valuation of $L$ to be $\operatorname{val}(L)=\sum_{i=1}^m a_i$.
		\end{itemize}
	\end{itemize}
\end{definition}

\begin{remark}\label{normal basis}
	For a hermitian $ O_F$-lattice $L$, we say a basis $e_1,\dots,e_m$ of $L$ is a \textit{normal basis} if its moment matrix $T=(h(e_i,e_j))_{i,j=1}^m$ is conjugate to
	$$(\beta_1\pi^{2b_1})\oplus\cdots\oplus(\beta_s\pi^{2b_s})\oplus\begin{pmatrix}
	0&\pi^{2c_1-1}\\ -\pi^{2c_1-1}&0 
	\end{pmatrix}\oplus\cdots\oplus\begin{pmatrix}
	0&\pi^{2c_t-1}\\ -\pi^{2c_t-1}&0 
	\end{pmatrix}$$
	by a permutation matrix, for some $\beta_1,\dots,\beta_s\in O_{F_0}^{\times}$ and $b_1,\dots,b_s,c_1,\dots,c_t\in\dZ$.  We have (\cite{LL22}*{Lemma 2.12})
	\begin{itemize}
		\item [(1)] normal basis exists;
		\item [(2)] the invariants $s, t$ and $b_1,\dots,b_s,c_1,\dots,c_t$ depend only on $L$;
		\item [(3)] when $L$ is integral, the fundamental invariants of $L$ are the unique nondecreasing rearrangement of $(2b_1+1,\dots,2b_s+1,2c_1,\dots,2c_t)$.
	\end{itemize}
\end{remark}

\begin{definition}\label{defn for selfdual and unimodular}
	Denote by $H$ the standard hyperbolic hermitian $ O_F$-lattice of rank $2$ given by the matrix $\begin{psmallmatrix}
	0& \pi^{-1}\\ -\pi^{-1}&0
	\end{psmallmatrix}$. For an integer $s\geq 0$, let $H^s\coloneqq H^{\obot s}$ be the orthogonal direct sum of $s$ copies of $H$. Then $H^s$ is a self-dual hermitian $ O_F$-lattice of rank $2s$. 
	
	Let $\epsilon\in O_{F_0}^{\times}$. Denote by $I_1^{\epsilon}$ the rank $1$ hermitian $ O_F$-lattice $\langle x\rangle$ with $h(x,x)=\epsilon$.

    We also use the same notations for $O_{\breve{F}}$-lattices.
\end{definition}

Let $L$ be an integral hermitian $O_F$-lattice. For any $d\geq 1$, the hermitian form $h$ on $L$ induces a pairing $\bar{h}$ valued in $O_F/\pi^{2d-1}O_F$ on $L/\pi^{2d}L$ by
    $$\bar{h}(x,y)\coloneqq h(\tilde{x},\tilde{y})\mod \pi^{2d-1}O_F,\quad\text{ for any }x,y\in L/\pi^{2d}L$$
    where $\tilde{x},\tilde{y}\in L$ is any lift of $x,y\in L/\pi^{2d}L$. The definition is independent of choices of lifts as $L$ is integral.

\begin{definition}\label{Den def 1}
	Let $(M,h_M)$ and $(L,h_L)$ be two integral hermitian $ O_F$-lattices. We denote by $\operatorname{Herm}_{L,M}$ the scheme of hermitian $ O_F$-module homomorphisms from $L$ to $M$, which is a scheme of finite type over $\Spec O_{F_0}$. We define the local density to be
	$$\operatorname{Den}(M,L)\coloneqq \operatorname{lim}_{d\to +\infty}\frac{|\operatorname{Herm}_{L,M}( O_{F_0}/(\pi_0^{d}))|}{q^{d\cdot d_{L,M}}}$$
	where $d_{L,M}$ is the dimension of $\operatorname{Herm}_{L,M}\otimes_{ O_{F_0}}F_0$. Here $\operatorname{Herm}_{L,M}(O_{F_0}/\pi_0^d)$ is given by the set
	$$\left\{\phi\in\operatorname{Hom}_{O_F}(L/\pi^{2d}L,M/\pi^{2d}M) \;\middle|\;
    \begin{array}{c}
    \bar{h}_M(\phi(x),\phi(y))= \bar{h}_L(x,y)
    \text{ for any }x,y\in L/\pi^{2d}L
    \end{array} \right\}.$$
\end{definition}

\begin{remark}\label{dim}
Let $M$ and $L$ be two hermitian $ O_F$-lattices of rank $m$ and $n$ and assume $\operatorname{Herm}_{L,M}\otimes_{ O_{F_0}}{F_0}\neq\emptyset$. Then
	$$d_{L,M}=\operatorname{dim}U_m-\operatorname{dim}U_{m-n}=n(2m-n).$$
\end{remark}

It is well-known that there is a local density polynomial $\operatorname{Den}(M,L,X)\in\dQ[X]$ such that
$$\operatorname{Den}(M,L,q^{-2k})=\operatorname{Den}(M\obot H^k,L).$$

\begin{definition}\label{local density}
	Let $L$ be a nonsplit hermitian $O_F$-lattice of rank $n$. 	Define
    \begin{equation*}
        M_n\coloneqq \begin{cases}
            H^r,&\text{ if }n=2r\text{ is even},\\
            H^r\obot I_1^1,&\text{ if }n=2r+1\text{ is odd}.
        \end{cases}
    \end{equation*}
    Then $M_n$ is always a split hermitian $O_F$-lattice and $\Den(M_n,L)=0$. We define
	$$\operatorname{Den}'(L)\coloneqq -2\left.\frac{\rd}{\rd X}\right|_{X=1}\operatorname{Den}(M_n,L,X)\text{ and }\partial\operatorname{Den}(L)\coloneqq \frac{\operatorname{Den}'(L)}{\operatorname{Den}(M_n,M_n)}.$$
\end{definition}

\begin{remark}\label{re:whittaker}
	Let $L\subset\dV_n$ be an $O_F$-lattice. Let $T\in\GL_n(F)$ be a representing moment matrix of $L$, and consider the $T$-th Whittaker function $W_T(s,1_{2n},\CF_{M_n^n})$ of the Schwartz function $\CF_{M_n^n}$ at the identity element $1_{2n}$. By \cite{KR14}*{Proposition~10.1},\footnote{In \cite{KR14}*{Proposition~10.1} and its proof, the lattice $L_{r,r}$ should be replaced by $M_n$.} we have
	\[
	W_T(s,1_{2n},\CF_{M_n^n})=\Den(M_n\obot H^s,L)
	\]
	for every integer $s\geq 0$. Thus, we obtain
	\[
	\log q\cdot\partial\Den(L)=\frac{W'_T(0,1_{2n},\CF_{M_n^n})}{\Den(M_n,M_n)}
	\]
	by Definition \ref{local density}.
\end{remark}

\subsection{Kudla--Rapoport conjecture}

The main result of this paper is the following theorem.

\begin{theorem}\label{main theorem}
	For any hermitian $O_F$-lattice $L\subset \dV_n$ we have
	$$\Int_{n,\cY}(L)=\partial\Den(L).$$
\end{theorem}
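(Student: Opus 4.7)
The plan is to split the proof by parity of $n$ and reduce the odd-dimensional cases to the even ones, where the theorem ultimately follows from \cite{LL22}. For $n$ even, I would combine two ingredients: Proposition \ref{y=pi z even} identifies $\cY(x) \simeq \cZ(\pi x)$, giving $\Int_{n,\cY}(L) = \Int_{n,\cZ}(\pi L)$, and the rescaling $h' = \pi^{-2}h$ produces an isometry of hermitian $O_F$-lattices $(L,h) \simeq (\pi L, -h')$. Applying the even-dimensional Kudla--Rapoport formula of \cite{LL22}*{Theorem 2.11} to $(\pi L, h')$ then matches the two sides of the identity. The $n=1$ case should follow from a direct computation using $\cN_1 = \Spf O_{\Fb}$, where both sides can be read off explicitly.

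For odd $n \geq 3$, I reduce to the even case in dimension $n+1$. Fixing an $O_F$-isogeny $\phi_0\colon \dX_{n+1} \to \dX_n \times \ubE$ of degree $q$ produces an orthogonal decomposition $\dV_{n+1} = \dV_n \obot \langle f \rangle_F$ with $h(f,f) = -1$. For an $O_F$-lattice $L \subset \dV_n$, set $L^{\#} = L \obot \langle f \rangle_{O_F}$. The theorem for $L$ then follows from the even-dimensional theorem applied to $L^{\#}$ together with two parallel reduction formulas $\partial\Den(L) = \tfrac{1}{2}\partial\Den(L^{\#})$ and $\Int_{n,\cY}(L) = \tfrac{1}{2}\Int_{n+1,\cY}(L^{\#})$.

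For the analytic reduction, I would compute $\Den(M_{n+1} \obot H^s, L^{\#})$ by enumerating the possible images of the norm $-1$ vector $f$ inside $M_{n+1} \obot H^s$, identifying the orthogonal complement of each image as an explicit Jordan decomposition, and comparing with $\Den(M_n \obot H^s, L)$. After differentiating in $s$ and passing to the normalized densities, this yields the factor $\tfrac{1}{2}$. For the geometric reduction, the key input is an identification $\cN_n \stackrel{\sim}{\lrarr} \cZ_{n+1}(u)^{\pm}$ with $u = \pi f$, realized by the closed embeddings $\delta^{\pm} \colon \cN_n \hookrightarrow \cN_{n+1}^{\pm}$ of \cite{RSZ18}*{Proposition 12.1}. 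I would establish this isomorphism in three stages: (i) verify via Grothendieck--Messing that $\delta^+$ factors through $\cZ(u)^+$; (ii) show bijectivity on $\bar{k}$-points by an explicit Dieudonn\'e module analysis of both sides; (iii) promote the bijection to an isomorphism of formal schemes using formal smoothness and dimension matching, where the tangent-space computations extend those of \cite{PR09}*{\S5c} for local models. Pullback isomorphisms $\cY_n(x) \simeq \cY_{n+1}(x) \cap \cN_n$ of individual $\cY$-cycles then yield the factor $\tfrac{1}{2}$ when $n$ such cycles are intersected, combined with the decomposition $\cN_{n+1} = \cN_{n+1}^+ \amalg \cN_{n+1}^-$.

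The main obstacle lies in step (iii) of the geometric reduction. In earlier analogous settings (hyperspecial, maximal parahoric, Kr\"amer) the closed embedding of the lower-dimensional Rapoport--Zink space is manifest and the inverse morphism can be written down directly; here $\delta^+$ is defined via a subtle quasi-isogeny built from $\phi_0$, so the inverse is not explicit. The formal-smoothness argument therefore becomes essential, and verifying the regularity of $\cZ_{n+1}(u)^+$ requires a local-model computation that must carefully respect the exotic spin condition \eqref{odd spin cond}. Once this identification is in place, the remaining compatibilities for pullback of $\cZ$- and $\cY$-cycles follow from parallel bijection-plus-smoothness arguments.
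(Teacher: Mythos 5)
Your proposal follows essentially the same route as the paper: even case via $\cY(x)\simeq\cZ(\pi x)$ and \cite{LL22}, $n=1$ by direct computation, and odd $n\geq 3$ via the parallel reductions $\partial\Den(L)=\tfrac12\partial\Den(L^\#)$ and $\Int_{n,\cY}(L)=\tfrac12\Int_{n+1,\cY}(L^\#)$, with the geometric side resting on the three-step identification $\cN_n\xrightarrow{\sim}\cZ_{n+1}(u)^\pm$ plus the $\cY$-cycle pullback isomorphism. One minor imprecision in your description of the obstacle: $\cZ_{n+1}(u)^+$ lives in the \emph{even}-dimensional $\cN_{n+1}$, so the local-model tangent-space computation (following \cite{PR09}*{\S5.c}) invokes the Wedge and even Spin conditions rather than the odd exotic condition \eqref{odd spin cond}, which only governs the source $\cN_n$.
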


The rest of the section will prove this for $n$ even by reformulating the result in \cite{LL22}.

In \cite{LL22}, Li and Liu defined a normalized hermitian form $h'$ on $\dV_n\colon$
$$\text{ for any }x,y\in\dV_n, h'(x,y)\coloneqq \pi^{-2}h(x,y).$$
For any $O_F$-lattice $L\subset\dV_n$, we shall use the notation $\partial\Den_{h'}(L)$ to emphasis that $L$ is viewed as a hermitian $O_F$-lattice via $h'$ (instead of $h$). When $n$ is even, normalizing the hermitian form on $L$ by $-1$ does not change $\partial\Den(L)$ by the formula given in \cite{LL22}*{Lemma 2.15}. We then have relation $\partial\Den_{h'}(L)=\partial\Den_{-h'}(L)=\partial\Den(\pi^{-1} L)$.

The following theorem is one of the main results in \cite{LL22}.

\begin{theorem}[\cite{LL22}*{Theorem 2.11}]\label{even z KR}
	Let $n\geq 2$ be even. For any $O_F$-lattice $L\subset\dV_n$ we have
	$$\Int_{n,\cZ}(L)=\partial\Den_{h'}(L).$$
\end{theorem}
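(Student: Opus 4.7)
Since this theorem is attributed to \cite{LL22}*{Theorem~2.11} and will be cited as a black box in the present paper, my plan is a sketch of the strategy of proof, which adapts the Li--Zhang approach \cite{LZ} from the unramified hyperspecial setting to the ramified exotic smooth one. The overall plan is a double induction, on the rank $n$ of $L$ and on a suitable measure of its complexity (such as $\val(L)$ or its fundamental invariants), with both sides of the identity shown to satisfy parallel recursions that match after a careful base-case computation.

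On the analytic side, one uses a normal basis of $L$ (Remark~\ref{normal basis}) to express $\Den(M_n \obot H^k, L, X)$ in terms of the block structure of the moment matrix, combined with standard recursion formulas for local representation densities of hermitian forms; differentiating at $X=1$ and normalizing then yields an explicit expression for $\partial\Den_{h'}(L)$ in terms of sub-lattice invariants. On the geometric side, one studies the derived intersection $\cO_{\cZ(x_1)} \otimes^{\mathbb{L}} \cdots \otimes^{\mathbb{L}} \cO_{\cZ(x_n)}$, exploiting the fact that each $\cZ(x_i)$ is a relative divisor on the regular formal scheme $\cN_n$. Each divisor is decomposed into its horizontal part (flat over $\Spf \OFb$ and typically uniformized by a lower-dimensional exotic smooth Rapoport--Zink space, which drives the inductive step on dimension) and its vertical part (supported on Bruhat--Tits-type strata of the special fiber, computed via Dieudonn\'e theory).

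The central and most difficult step is the \emph{difference formula}: for a modification $L \subset L'$ of small index, one must show that $\Int_{n,\cZ}(L') - \Int_{n,\cZ}(L)$ and $\partial\Den_{h'}(L') - \partial\Den_{h'}(L)$ admit the same closed expression in terms of lower-rank data, thereby reducing the theorem to a minimal base case (e.g.\ $L$ self-dual or almost self-dual) where both sides can be computed directly. In the ramified exotic smooth setting this matching is substantially more delicate than in \cite{LZ}, because the special fiber of $\cN_n$ is enriched by the Wedge and Spin conditions, producing additional vertical strata whose contributions to the arithmetic intersection must be controlled precisely and matched with corresponding terms on the analytic side. This vertical-component bookkeeping is the main obstacle and the step that requires most of the technical work.
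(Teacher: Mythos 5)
The paper does not prove this statement: it is imported verbatim from \cite{LL22}*{Theorem 2.11} and used as a black box, which you correctly recognize. There is therefore no internal proof to compare your sketch against; what can be assessed is whether your sketch accurately reflects the Li--Liu argument.

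Your outline captures the broad LZ-style architecture (induction on rank, local density recursions via a normal basis, decomposition of the derived intersection into horizontal and vertical parts, and a reduction to a base case). However, two points deserve correction. First, the key mechanism in \cite{LZ} and \cite{LL22} is not so much a ``difference formula'' on its own as a Fourier-theoretic \emph{uncertainty principle} for functions on hermitian lattices, combined with the Tate conjecture for the relevant Deligne--Lusztig varieties, which is what forces the vertical contributions on the two sides to agree; your phrasing of ``parallel recursions'' and ``difference formula'' elides this central ingredient. Second, your claim that ``the special fiber of $\cN_n$ is enriched by the Wedge and Spin conditions, producing additional vertical strata'' is misleading: the whole point of the exotic smooth model is that the Wedge and Spin conditions cut out a model with \emph{good reduction} (formally smooth over $\Spf\OFb$), so the special fiber is not enriched by extra singular strata. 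The genuine extra difficulties relative to \cite{LZ} lie rather in the ramified local density computations (the Cho--Yamauchi type formula must be redone for $\pi$-modular and almost $\pi$-modular lattices) and in the different lattice combinatorics governing horizontal components and primitivity. As a black-box acknowledgment your proposal is appropriate, but these inaccuracies should be fixed if the sketch is meant to be informative.
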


\begin{remark}\label{re:polarization}
	Suppose $n\geq 2$ is even. For any hermitian formal $O_F$-module $(X,\iota_X,\lambda_X)$ of signature $(1,n-1)$, the polarization $\lambda_X$ of $X$ satisfies $\Ker(\lambda_X)=X[\iota_X(\pi)]$. Then there is a unique $\iota_X$-compatible morphism $\sigma_X\colon X\to X^\vee$ satisfying $\lambda_X=\sigma_X\circ\iota_X(\pi)$, which is in fact a symmetrization, i.e.\ an isomorphism with $\sigma_X^{\vee}=\sigma_X$, . Conversely, given a $\iota_X$-compatible symmetrization $\sigma_X$ of $X$, we may recover $\lambda_X$ as $\sigma_X\circ\iota_X(\pi)$. In what follows, we call $\sigma_X$ the symmetrization of $\lambda_X$.
\end{remark}

\begin{proposition}\label{y=pi z even}
	Let $n\geq 2 $ be even. For any nonzero $x\in\dV_n$, we have a natural morphism
    $$\xymatrix@R=0ex{\cY(x)\quad\ar[r]&\quad\cZ(\pi x)\\
    (X,\ucE\to X^{\vee})\ar@{|->}[r]&(X,\ucE\to X^{\vee}\stackrel{\sigma_X^{-1}}{\lrarr}X)   
    }$$
	which turns out to be an isomorphism. Consequently, for any $O_F$-lattice $L\subset \dV_n$ we have
	\begin{equation*}
	\Int_{n,\cY}(L)=\Int_{n,\cZ}(\pi L).
	\end{equation*}
    In particular $\Int_{n,\cY}(L)$ is well-defined, independent of a choice of basis of $L$.
\end{proposition}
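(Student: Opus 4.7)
The plan is to use the symmetrization $\sigma_X\colon X\xrightarrow{\sim}X^\vee$ recalled in Remark \ref{re:polarization} to convert the $\cY$-cycle extension condition (an extension to $X^\vee$) into the $\cZ$-cycle extension condition (an extension to $X$), and conversely. Since $\sigma_X$ is an isomorphism, this identifies $\cY(x)$ with $\cZ(\pi x)$ as closed formal subschemes of $\cN_n$, and the assignment displayed in the statement realizes this identification explicitly.

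First I would record the key input: because $n$ is even, the polarization $\lambda_X$ of any hermitian formal $O_F$-module $(X,\iota_X,\lambda_X)$ of signature $(1,n-1)$ has kernel exactly $X[\iota_X(\pi)]$ (the inclusion is part of the moduli problem and equality follows by a rank count). Remark \ref{re:polarization} then produces a unique $\iota_X$-compatible symmetrization $\sigma_X\colon X\xrightarrow{\sim}X^\vee$ satisfying $\lambda_X=\sigma_X\circ\iota_X(\pi)$, functorially in the base. In particular, composition with $\sigma_X$ (resp.\ $\sigma_X^{-1}$) gives a functorial bijection between homomorphisms $\ucE_S\to X$ and homomorphisms $\ucE_S\to X^\vee$ for every test scheme $S$.

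Next I would verify that the assignment in the proposition is well-defined. Given an $S$-point $(X,\iota,\lambda,\rho)$ of $\cY(x)$ with extending homomorphism $y_X\colon\ucE_S\to X^\vee$, the composition $\sigma_X^{-1}\circ y_X\colon\ucE_S\to X$ restricts on $\ovS$ to
\[
\sigma_X^{-1}\circ\lambda_X\circ\rho^{-1}\circ x=\iota_X(\pi)\circ\rho^{-1}\circ x=\rho^{-1}\circ(\pi x),
\]
where the first equality uses $\sigma_X\circ\iota_X(\pi)=\lambda_X$ and the second uses the $O_F$-linearity of $\rho$ together with the identification $\pi x=\iota_{\dX_n}(\pi)\circ x$ in $\dV_n$. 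Hence the $S$-point lies on $\cZ(\pi x)$. Running the same computation with $\sigma_X$ in place of $\sigma_X^{-1}$ shows that the assignment $(X,z_X)\mapsto(X,\sigma_X\circ z_X)$ sends $S$-points of $\cZ(\pi x)$ into $\cY(x)$, and the two assignments are manifestly mutually inverse on $S$-valued points. Thus the two extension conditions cut out the same closed formal subscheme of $\cN_n$, yielding the asserted isomorphism $\cY(x)\xrightarrow{\sim}\cZ(\pi x)$.

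The consequence for intersection numbers is then immediate. For any $O_F$-basis $x_1,\dots,x_n$ of $L\subset\dV_n$, the elements $\pi x_1,\dots,\pi x_n$ form an $O_F$-basis of $\pi L$, and the equalities $\cO_{\cY(x_i)}=\cO_{\cZ(\pi x_i)}$ of $\cO_{\cN_n}$-modules give
\[
\chi\bigl(\cO_{\cY(x_1)}\otimes^{\mathbb{L}}\cdots\otimes^{\mathbb{L}}\cO_{\cY(x_n)}\bigr)=\chi\bigl(\cO_{\cZ(\pi x_1)}\otimes^{\mathbb{L}}\cdots\otimes^{\mathbb{L}}\cO_{\cZ(\pi x_n)}\bigr)=\Int_{n,\cZ}(\pi L),
\]
which is basis-independent by \cite{LL22}*{Corollary~2.35}. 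Hence $\Int_{n,\cY}(L)$ is well-defined and equals $\Int_{n,\cZ}(\pi L)$. I do not anticipate a genuine obstacle: the argument is routine bookkeeping around the existence and functoriality of $\sigma_X$, which is special to the case of even $n$.
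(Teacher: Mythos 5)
Your proposal is correct and follows exactly the paper's (very terse) argument: the paper simply exhibits the inverse $(X,\ucE\to X)\mapsto(X,\ucE\to X\xrightarrow{\sigma_X}X^\vee)$ and declares the rest "clear and easy to verify." You have filled in the details that the paper leaves implicit — in particular the computation $\sigma_X^{-1}\circ\lambda_X\circ\rho^{-1}\circ x=\iota_X(\pi)\circ\rho^{-1}\circ x=\rho^{-1}\circ(\pi x)$ using $\lambda_X=\sigma_X\circ\iota_X(\pi)$, the uniqueness/functoriality of $\sigma_X$ from Remark \ref{re:polarization}, and the reduction of the intersection-number statement to \cite{LL22}*{Corollary~2.35} applied to the basis $\pi x_1,\dots,\pi x_n$ of $\pi L$.
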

\begin{proof}
	The statements are all clear and easy to verify$\colon$the inverse morphism is just given by
	$$\xymatrix@R=0ex{\cZ(\pi x)\quad\ar[r]&\quad\cY(x)\\
    (X,\ucE\to X^)\ar@{|->}[r]&(X,\ucE\to X\stackrel{\sigma_X}{\lrarr}X^{\vee}).   
    }$$
\end{proof}

\begin{corollary}\label{main theorem for even}
	Theorem \ref{main theorem} holds for any $n$ even.
\end{corollary}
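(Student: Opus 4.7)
The plan is to chain together the three displayed ingredients already in hand. Starting from the left-hand side of Theorem \ref{main theorem}, Proposition \ref{y=pi z even} immediately converts the $\cY$-intersection number into a $\cZ$-intersection number for the scaled lattice:
\[
\Int_{n,\cY}(L)=\Int_{n,\cZ}(\pi L).
\]
Then I would invoke Theorem \ref{even z KR} applied to the lattice $\pi L\subset \dV_n$, which gives $\Int_{n,\cZ}(\pi L)=\partial\Den_{h'}(\pi L)$, where $\partial\Den_{h'}$ is the derived local density computed using the normalized hermitian form $h'=\pi^{-2}h$.

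It remains to identify $\partial\Den_{h'}(\pi L)$ with $\partial\Den(L)$. For this I would use the isomorphism of hermitian $O_F$-lattices
\[
(L,h)\xrightarrow{\;\sim\;}(\pi L,-h'),\qquad x\mapsto \pi x,
\]
which is the identity already pointed out in the \emph{Strategy of proof} section of the introduction; the verification is the one-line computation $-h'(\pi x,\pi y)=-\pi^{-2}(\pi\bar\pi)h(x,y)=h(x,y)$. Combined with the identity $\partial\Den_{-h'}(\pi L)=\partial\Den_{h'}(\pi L)$ valid for $n$ even (from \cite{LL22}*{Lemma 2.15}, as noted in the paragraph preceding Theorem \ref{even z KR}), this yields
\[
\partial\Den(L)=\partial\Den_{-h'}(\pi L)=\partial\Den_{h'}(\pi L).
\]
Splicing the three equalities together delivers $\Int_{n,\cY}(L)=\partial\Den(L)$.

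There is no real obstacle here: all of the substantive work is absorbed into Theorem \ref{even z KR} (the Li--Liu result) on the geometric/analytic side and into Proposition \ref{y=pi z even} on the comparison of $\cY$- and $\cZ$-cycles. The only thing to be careful about is bookkeeping with the two hermitian forms $h$ and $h'$ and with the sign change when replacing $h'$ by $-h'$; so I would make the isometry $(L,h)\simeq(\pi L,-h')$ explicit and cite \cite{LL22}*{Lemma 2.15} for invariance of $\partial\Den$ under sign change in even rank. No separate argument is required for well-definedness of $\Int_{n,\cY}(L)$, since that is already guaranteed by Proposition \ref{y=pi z even} via its identification with $\Int_{n,\cZ}(\pi L)$.
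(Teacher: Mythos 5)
Your proof is correct and follows exactly the same route as the paper's: apply Proposition \ref{y=pi z even} to convert $\Int_{n,\cY}(L)$ to $\Int_{n,\cZ}(\pi L)$, invoke Theorem \ref{even z KR}, and then reconcile $\partial\Den_{h'}(\pi L)$ with $\partial\Den(L)$ via the isometry $(L,h)\simeq(\pi L,-h')$ together with the sign-invariance of $\partial\Den$ in even rank from \cite{LL22}*{Lemma 2.15}. The only difference is that you spell out the isometry computation explicitly, whereas the paper just records the resulting identity $\partial\Den_{h'}(L)=\partial\Den(\pi^{-1}L)$ in the paragraph preceding Theorem \ref{even z KR}.
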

\begin{proof}
	When $n$ is even, for any $O_F$-lattice $L\subset\dV_n$, the $h$-hermitian local density $\partial\Den(L)$ is the same as the $h'$-hermitian local density $\partial\Den_{h'}(\pi L)$. Combine Theorem \ref{even z KR} and Proposition \ref{y=pi z even} we get
	$$\Int_{n,\cY}(L)=\Int_{n,\cZ}(\pi L)=\partial\Den_{h'}(\pi L)=\partial\Den(L).$$
\end{proof}

\section{Reduction at analytic side}

To simplify the notations, we first define \textit{standard normal base} for $H^s$ and $H^s\obot I_1^\epsilon$.

\begin{definition}\label{standard normal basis}
	We say an $O_F$-basis $e_1,f_1,\dots,e_s,f_s$ (or an $O_{\breve{F}}$-basis if we view $H^s$ as an $O_{\breve{F}}$-lattice) of $H^s$ is a \textit{standard normal basis} if the only nonzero hermitian pairing between them is
	$$h(e_i,f_i)=-h(f_i,e_i)=\pi^{-1}\text{ for each }i.$$
	In other words, for each $i$, $e_i,f_i$ is a normal basis for $H$, and for $i\neq j$, these $H$ are orthogonal to each other.
	
	We say a basis $e_1,f_1,\dots,e_s,f_s,\varphi$ of $H^s\obot I_1^\epsilon$ is a \textit{standard normal basis} if $e_1,f_1,\dots,e_s,f_s$ is a standard normal basis for $H^s$, $\varphi$ is a basis for $I_1^\epsilon$ orthogonal to all $e_i,f_j$ with $h(\varphi,\varphi)=\epsilon$.
\end{definition}

Our strategy to deal with the analytic side is to relate it to the even dimensional case which is one dimensional higher. We first prove a key lemma which will be used repeatedly later.

\begin{lemma}\label{ortho in H^s}
	Let $M=H^s$ and $\phi\in M$ be any element such that $\phi\notin\pi M$. Then the submodule
	$$M(\phi)\coloneqq \left\{x\in M \res h(x,\phi)=0\right\}$$
	can be described as
	$$M(\phi)=H^{s-1}\obot I$$
	where $I=\langle\phi'\rangle_{O_F}$ is a rank one $O_F$-lattice such that $h(\phi',\phi')=-h(\phi,\phi)$ and $\phi'-\phi\in\pi M$.
\end{lemma}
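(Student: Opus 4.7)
The plan is to exhibit a self-dual rank-$2$ sublattice of $M = H^s$ containing $\phi$, split it off orthogonally, and finish with a rank-one computation inside it. Since $M$ is self-dual and $\phi \notin \pi M$, the functional $h(-,\phi)\colon M \to \pi^{-1}O_F$ is surjective, so one can choose $\psi \in M$ with $h(\phi,\psi) = \pi^{-1}$. Setting $\alpha = h(\phi,\phi)$ and $\beta = h(\psi,\psi)$ (both necessarily in $O_{F_0}$), the sublattice $L \coloneqq O_F\phi + O_F\psi$ has Gram matrix $T = \begin{pmatrix} \alpha & \pi^{-1} \\ -\pi^{-1} & \beta \end{pmatrix}$ with $\det(\pi T) = 1 + \pi_0\alpha\beta \in 1 + \pi_0 O_{F_0} \subset O_F^\times$. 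Hence $L$ is a self-dual sublattice of $M$, and one obtains an orthogonal decomposition $M = L \obot L^\perp$ with $L^\perp$ self-dual of rank $2(s-1)$.

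Next I would identify $L \cong H$ and deduce $L^\perp \cong H^{s-1}$. The key input is the norm-index fact $1 + \pi_0 O_{F_0} \subset \Nm(F^\times)$, which follows from Hensel's lemma in odd residue characteristic$\colon$ every element of $1 + \pi_0 O_{F_0}$ is a square in $O_{F_0}^\times$. Thus $\det(L)/\det(H) = 1 + \pi_0\alpha\beta$ is a norm, so $L\otimes F$ and $H\otimes F$ have equal discriminant class in $F_0^\times/\Nm(F^\times)$ and are therefore isometric as hermitian spaces. Transitivity of the unitary group on self-dual lattices in a fixed hermitian space (a Witt-type extension result) then gives $L \cong H$, and Witt cancellation applied to $H^s = L \obot L^\perp$ yields $L^\perp \cong H^{s-1}$.

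Finally I would extract $\phi'$ by a direct rank-two calculation inside $L$. A vector $a\phi + b\psi \in L$ lies in $L(\phi)$ iff $b = \pi\alpha a$, so $L(\phi) = O_F\phi_0$ with $\phi_0 \coloneqq \phi + \pi\alpha\psi$; note that $\phi_0 - \phi \in \pi M$ already. Expansion gives $h(\phi_0,\phi_0) = -\alpha(1 + \pi_0\alpha\beta)$, which I correct to $-\alpha$ by rescaling$\colon$ Hensel again provides $u \in 1 + \pi_0 O_{F_0}$ with $u^2 = (1 + \pi_0\alpha\beta)^{-1}$. The element $\phi' \coloneqq u\phi_0$ then satisfies $h(\phi',\phi') = \Nm(u)\cdot h(\phi_0,\phi_0) = -\alpha$, and $\phi' - \phi = (u - 1)\phi + u\pi\alpha\psi \in \pi M$ since $u - 1 \in \pi_0 O_{F_0} \subset \pi O_F$. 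Combining with the decomposition above produces $M(\phi) = L(\phi) \obot L^\perp = \langle \phi' \rangle_{O_F} \obot H^{s-1}$.

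The main technical hinge is the norm-square computation $1 + \pi_0 O_{F_0} \subset (O_{F_0}^\times)^2 \subset \Nm(F^\times)$, which uses the odd residue characteristic and is needed both to identify $L^\perp \cong H^{s-1}$ and to carry out the final rescaling of $\phi_0$ into $\phi'$.
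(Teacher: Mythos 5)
Your proposal is correct, but it takes a different route than the paper. The paper fixes a standard normal basis $e_1,f_1,\dots,e_s,f_s$ and normalizes $\phi$ so that the $e_1$-coefficient is $1$; then $f_1$ plays the role of your $\psi$, with the crucial extra feature that $f_1$ is \emph{isotropic}. This makes $h(\phi',\phi')=-h(\phi,\phi)$ come out exactly for $\phi'=\phi+h(\phi,\phi)\pi f_1$ with no correction, and the full basis $\phi',\sigma_2,\tau_2,\dots,\sigma_s,\tau_s$ of $M(\phi)$ is written down explicitly and verified by direct computation. You instead pick an arbitrary $\psi$ with $h(\phi,\psi)=\pi^{-1}$, split off the rank-$2$ self-dual sublattice $L=O_F\phi+O_F\psi$, and appeal to classification facts (discriminant invariance, transitivity of the unitary group on self-dual lattices, Witt cancellation) to identify $L\cong H$ and $L^\perp\cong H^{s-1}$; because your $\psi$ need not be isotropic, the orthogonal generator $\phi_0$ picks up a factor $1+\pi_0\alpha\beta$ that you then remove by a Hensel square root. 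Both arguments are sound. Your route is more structural and shows where the lemma lives (splitting off a self-dual plane), while the paper's is more elementary and self-contained, needing only the normal-basis lemma it already cites. Two small remarks: (i) the discriminant computation and appeal to norms is not actually needed once you know $L$ is self-dual of rank $2$, since by the normal-basis classification (Remark \ref{normal basis}) every self-dual hermitian $O_F$-lattice is already isometric to a hyperbolic sum $H^t$, so $L\cong H$ and $L^\perp\cong H^{s-1}$ come for free; (ii) if you instead replace $\psi$ by the isotropic vector $\psi-\tfrac{\pi\beta}{2}\phi\cdot(\text{unit})$ at the outset (as the paper effectively does by choosing $f_1$), the Hensel rescaling step disappears entirely.
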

\begin{proof}
 Choose a standard normal basis $e_1,f_1,e_2,f_2,\dots,e_s,f_s$ of $M$. Write
	$$\phi=a_1e_1+b_1f_1+a_2e_2+b_2f_2+\dots+a_se_s+b_sf_s.$$
 As $\phi\notin \pi M$, without loss of generality we may assume $a_1\in O_F^{\times}$. Then divide $\phi$ by $a_1$ does not change the submodule $M(\phi)$ so we may assume $a_1=1$.
	
	For $i\geq 2$ we define
	\begin{equation}\label{sigma and tau}
	    \sigma_i\coloneqq e_i+\bar{b}_if_1,\quad \tau_i\coloneqq f_i-\bar{a}_if_1.
	\end{equation}
	
	It is a direct computation to verify that
	$$\begin{aligned}
	h(\phi,\sigma_i)&=h(\phi,\tau_i)=0,\\
	h(\sigma_i,\sigma_j)&=h(\tau_i,\tau_j)=0,\\
	h(\sigma_i,\tau_j)&=-h(\tau_i,\sigma_j)=\pi^{-1}\delta_{i,j}.
	\end{aligned}$$
	In particular the $O_F$-span of $\{\sigma_i,\tau_i\}$ for $i\geq 2$ is isomorphic to $H^{s-1}\subset M(\phi)$.
	
	Now we let 
    \begin{equation}\label{phi'}
        \phi'\coloneqq \phi+h(\phi,\phi)\pi f_1=e_1+(b_1+h(\phi,\phi)\pi)f_1+\sum_{i=2}^s (a_i e_i+b_i f_i).
    \end{equation}
    Then it is easy to compute
	$$\begin{aligned}
	h(\phi',\phi)&=h(\phi',\sigma_i)=h(\phi',\tau_i)=0,\\
	h(\phi',\phi')&=-h(\phi,\phi).
	\end{aligned}$$
	Let $I\coloneqq \langle\phi'\rangle_{O_F}$ and we have
	$$\langle\phi',\sigma_2,\tau_2,\dots,\sigma_s,\tau_s\rangle_{O_F}\simeq I\obot H^{s-1}\subset M(\phi).$$
	
	For any $x\in M(\phi)$, we may use $\phi'$ to get rid of $e_1$ in the expression of $x$, $\sigma_i$ to remove $e_i$, and $\tau_i$ to remove $f_i$ in $x$ for all $i\geq 2$. Then $x\in M(\phi)$ is just a multiple of $f_1$, which must be $0$. We have shown that indeed $M(\phi)\simeq I\obot H^{s-1}$ as described.
\end{proof}

\begin{corollary}\label{normal basis and ortho}
	If $\phi\in M=H^s$ and $\phi\notin\pi M$ with $h(\phi,\phi)=\beta$, then there is a standard normal basis $\sigma_i,\tau_i$ of $M$ such that $\sigma_1+\frac{\pi\beta}{2}\tau_1,\sigma_2,\tau_2,\dots,\sigma_s,\tau_s$ is a normal basis of $M(\phi)$ while $\phi=\sigma_1-\frac{\pi\beta}{2}\tau_1$. Consequently if $\beta\neq 0$, i.e.~ $\phi$ is not isotropic, then
    \begin{equation*}
        M(\phi)\obot\langle \phi\rangle_{O_F}\subset^b M\quad\text{ and }\quad M(\phi)_F\obot\langle \phi\rangle_F\simeq M\otimes_{O_F}F
    \end{equation*}
    where $b=1+\val_\pi(\beta)$.
\end{corollary}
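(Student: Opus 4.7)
The plan is to upgrade the construction in Lemma \ref{ortho in H^s} so that the new basis displays $\phi$ itself, rather than its normalized form $\phi/a_1$. The main work is a change of basis within the first $H$-summand that repackages the vectors $\phi$ and $\phi'$ from Lemma \ref{ortho in H^s} into a hyperbolic pair.

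First I would normalize $\phi$ in a chosen standard normal basis $e_1,f_1,\dots,e_s,f_s$ of $M$. Since $\phi \notin \pi M$, some coefficient of $\phi$ is a unit. Permuting $H$-blocks, and if necessary performing the swap $(e_j,f_j) \mapsto (f_j,-e_j)$ inside a single block (which preserves the standard pairing $h(e_j,f_j) = \pi^{-1}$), I can arrange for the coefficient $a_1$ of $e_1$ in $\phi$ to be a unit. Rescaling $(e_1,f_1) \mapsto (a_1 e_1, \bar{a}_1^{-1} f_1)$---another hermitian isomorphism onto a standard $H$---then reduces to $a_1 = 1$. None of these moves alters $\beta = h(\phi,\phi)$.

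Next, in this normalized setup I would define
\[
\tau_1 \coloneqq f_1, \qquad \sigma_1 \coloneqq \phi + \tfrac{\pi\beta}{2}\tau_1,
\]
and take $\sigma_i,\tau_i$ for $i \geq 2$ exactly as in \eqref{sigma and tau}. Using $\overline{\pi\beta/2} = -\pi\beta/2$ (valid since $\beta \in O_{F_0}$) together with the easy identity $h(\phi,f_1) = \pi^{-1}$, direct computation gives $h(\sigma_1,\sigma_1) = 0 = h(\tau_1,\tau_1)$, $h(\sigma_1,\tau_1) = \pi^{-1}$, and that $\sigma_1,\tau_1$ are orthogonal to every $\sigma_i,\tau_i$ for $i \geq 2$. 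Combined with the pairing relations established in Lemma \ref{ortho in H^s}, this shows $\{\sigma_i,\tau_i\}_{i=1}^s$ is a standard normal basis of $M$; the span condition follows by inverting the linear change of variables to recover the $e_j,f_j$. By construction $\phi = \sigma_1 - \tfrac{\pi\beta}{2}\tau_1$, and substituting $\tau_1 = f_1$ into \eqref{phi'} yields $\phi' = \sigma_1 + \tfrac{\pi\beta}{2}\tau_1$, so Lemma \ref{ortho in H^s} directly delivers the claimed normal basis of $M(\phi)$.

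Finally, for the consequence when $\beta \neq 0$, the sublattice $M(\phi) \obot \langle \phi\rangle_{O_F}$ coincides with $M$ outside the first $H$-summand, so the colength equals that of $\langle \phi',\phi\rangle_{O_F} \subset \langle \sigma_1,\tau_1\rangle_{O_F}$. The transition matrix $\bigl(\begin{smallmatrix} 1 & 1 \\ -\pi\beta/2 & \pi\beta/2\end{smallmatrix}\bigr)$ has determinant $\pi\beta$, whose $\pi$-adic valuation is $1 + \val_\pi(\beta) = b$; non-vanishing of $\pi\beta$ also yields $M(\phi)_F \obot \langle\phi\rangle_F = M_F$. The whole argument is essentially bookkeeping on top of Lemma \ref{ortho in H^s}; the only point requiring a small observation is the initial normalization of $\phi$, but the swap-and-rescale moves handle it routinely, so I do not anticipate a substantive obstacle.
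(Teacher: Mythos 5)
Your proposal is correct and follows essentially the same approach as the paper: you build the identical vectors $\tau_1=f_1$ and $\sigma_1=\phi+\frac{\pi\beta}{2}f_1$ (the paper arrives at them as $\frac{\phi+\phi'}{2}$ and $\frac{\phi'-\phi}{\pi\beta}$), reuse $\sigma_i,\tau_i$ for $i\geq2$ from Lemma \ref{ortho in H^s}, and read off the colength from the passage $\{\phi,\phi'\}\rightsquigarrow\{\sigma_1,\tau_1\}$. The one place where your write-up is actually a bit cleaner than the paper's: you normalize $a_1=1$ by rescaling the hyperbolic basis rather than rescaling $\phi$, which keeps $\beta=h(\phi,\phi)$ unchanged and makes the identity $\phi=\sigma_1-\frac{\pi\beta}{2}\tau_1$ hold for $\phi$ itself (whereas the Lemma's ``divide $\phi$ by $a_1$'' trick would replace $\phi$ and $\beta$); the paper glosses over this with ``for simplicity we shall just assume it is already $1$.'' Your determinant-of-transition-matrix computation of the colength is an equivalent reformulation of the paper's direct lattice comparison $M(\phi)\obot\langle\phi\rangle_{O_F}=\langle\sigma_1,\pi\beta\tau_1,\sigma_2,\dots,\tau_s\rangle_{O_F}$.
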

\begin{proof}
	As in the previous lemma, choose a standard normal basis $e_i,f_i$ of $M$. Write $\phi=a_1 e_1+b_1 f_1+\dots+a_s e_s+b_s f_s$ and we may assume $a_1\in O_F^{\times}$ and for simplicity we shall just assume it is already 1. Then by the proof of the previous lemma, we can construct $\sigma_2,\tau_2,\dots,\sigma_s,\tau_s$. All we need to do is to find $\sigma_1$ and $\tau_1$.
	
	Use the same notation, $\phi'=\phi+\beta\pi f_1$. We just set
	\begin{equation}\label{sigma 1}
    \sigma_1\coloneqq \frac{\phi+\phi'}{2}=\phi+\frac{\pi\beta}{2}f_1\text{ and }\tau_1\coloneqq \frac{\phi'-\phi}{\pi\beta}=f_1.
    \end{equation}
	It is easy to verify $\sigma_1,\tau_1$ is a normal basis of $H$.

    Then $M(\phi)\obot\langle\phi\rangle_{O_F}=\langle \sigma_1,\beta\pi \tau_1,\sigma_2,\tau_2,\dots,\sigma_s,\tau_s\rangle_{O_F}$ has co-length $1+\val_\pi(\beta)$ in $M$ if $\beta\neq 0$.
\end{proof}

\begin{lemma}\label{density normalization}
    Let $M_n=H^{n/2}$ if $n$ is even, and $M_n=H^{(n-1)/2}\obot I_1^1$ if $n$ is odd. Then
    \begin{equation*}
        \Den(M_n,M_n)=\begin{cases}
            {\displaystyle\prod_{i=1}^{r}(1-q^{-2i})},&\quad\text{if }n=2r\text{ even},\\[2ex]
            2{\displaystyle\prod_{i=1}^{r}(1-q^{-2i})},&\quad\text{if }n=2r+1\text{ odd}.
        \end{cases}
    \end{equation*}
\end{lemma}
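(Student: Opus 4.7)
The plan is to induct on $r = \lfloor n/2\rfloor$, handling both parities of $n$ uniformly via the decomposition $M_n = M_{n-2}\obot H$ for $n \geq 2$ together with the recursion
\begin{equation*}
\Den(M_n, M_n) = (1 - q^{-2r})\cdot\Den(M_{n-2}, M_{n-2}).
\end{equation*}

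The base cases are $\Den(M_0, M_0) = 1$ (empty product) and $\Den(M_1, M_1) = 2$. For the latter, $M_1 = I_1^1 = \langle x\rangle$, so a hermitian endomorphism is multiplication by some $a\in O_F$ with $a\bar a = 1$. Writing $a = \alpha + \pi\beta$ with $\alpha,\beta\in O_{F_0}/\pi_0^d$ and using $\pi^{2d-1}O_F\cap O_{F_0} = \pi_0^d O_{F_0}$, the condition $a\bar a\equiv 1\pmod{\pi^{2d-1}O_F}$ becomes $\alpha^2 - \pi_0\beta^2 \equiv 1\pmod{\pi_0^d}$. For each $\beta$ this has exactly two solutions $\alpha$ by Hensel's lemma (using that $p$ is odd), so $|\Herm_{M_1,M_1}(O_{F_0}/\pi_0^d)| = 2q^d$; since $d_{M_1,M_1} = 1$ by Remark \ref{dim}, one concludes $\Den(M_1, M_1) = 2$.

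For the inductive step, any hermitian endomorphism $\phi$ of $M_n = M_{n-2}\obot H$ is determined by its restriction to the distinguished $H$-summand (a hermitian embedding $H\hookrightarrow M_n$ carrying the standard basis to a hyperbolic pair $(\phi(e),\phi(f))$) together with its restriction to $M_{n-2}$, which must factor through $\phi(H)^\perp\subset M_n$. Since $M_n$ is integral and $\phi(e),\phi(f)$ are primitive, a short computation with the perfect pairing shows $\phi(H)$ is a saturated self-dual sublattice, so $M_n = \phi(H)\obot\phi(H)^\perp$; iterated use of Lemma \ref{ortho in H^s} together with Witt's extension theorem identifies $\phi(H)^\perp$ with $M_{n-2}$ as a hermitian lattice. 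The dimension identity $n^2 = (4n-4) + (n-2)^2$ from Remark \ref{dim} matches the normalizations, and passing to the limit $d\to\infty$ gives
\begin{equation*}
\Den(M_n, M_n) = \Den(M_n, H)\cdot\Den(M_{n-2}, M_{n-2}).
\end{equation*}
The remaining factor $\Den(M_n, H)$ is the $\pi$-adic density of hyperbolic pairs in $M_n$: fibering over a primitive isotropic vector $\phi(e)$ (which lives in the rank-$2r$ split hermitian part of $M_n$ after reduction mod $\pi$) and then over its Witt partner $\phi(f)$ (a smooth torsor of the expected dimension) yields $\Den(M_n, H) = 1 - q^{-2r}$.

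Telescoping the recursion from $n$ down to $n\bmod 2$ and substituting the base cases produces the two stated product formulas. The main technical step is the explicit evaluation $\Den(M_n, H) = 1 - q^{-2r}$: one must show that the $\pi$-adic density of primitive isotropic vectors in $M_n$ is governed by the isotropic cone of the $2r$-dimensional split hermitian $k$-space carved out by the $H^r$ part of $M_n$, and that the Witt-partner fibration contributes only the expected factor $1$. This can be checked directly by reduction mod $\pi$, or bypassed by invoking the Whittaker interpretation of Remark \ref{re:whittaker}, which identifies $\Den(M_n, M_n)$ with a classical Siegel-series value whose closed form is well known.
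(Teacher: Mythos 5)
Your proposal takes a genuinely different route from the paper. The paper reads off $\Den(H^r,H^r)=\prod_{i=1}^{r}(1-q^{-2i})$ directly from \cite{LL22}*{Lemma 2.15}, and for odd $n$ applies the factorization from \cite{HLSY}*{Lemma 5.13} together with $\Den(I_1^1,I_1^1)=2$ from \cite{HLSY}*{Corollary 5.17}. You instead propose peeling off a hyperbolic summand $H$ at each step via the recursion $\Den(M_n,M_n)=\Den(M_n,H)\cdot\Den(M_{n-2},M_{n-2})$ with $\Den(M_n,H)=1-q^{-2r}$, $r=\lfloor n/2\rfloor$. Your base-case computation of $\Den(M_1,M_1)=2$ is correct and is a nice elementary reproof of \cite{HLSY}*{Corollary 5.17}: the identification $\pi^{2d-1}O_F\cap O_{F_0}=\pi_0^{d}O_{F_0}$ and the Hensel count of solutions to $\alpha^2-\pi_0\beta^2\equiv 1\pmod{\pi_0^d}$ are both right, and the dimension bookkeeping via Remark \ref{dim} checks out.

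The inductive step, however, has a genuine gap. You assert that ``a short computation with the perfect pairing shows $\phi(H)$ is a saturated self-dual sublattice'' and that the fiber is $\Herm_{M_{n-2},M_{n-2}}$, but $\phi$ lies in $\Herm_{M_n,M_n}(O_{F_0}/\pi_0^d)$ and is only a homomorphism $M_n/\pi^{2d}M_n\to M_n/\pi^{2d}M_n$; ``$\phi(H)$'' is only defined in a torsion quotient, so saturation, self-duality and orthogonal complements all require a lifting argument at each finite level $d$. The paper carries out exactly such a lifting argument in Proposition \ref{analytic reduction}, but there the norm $h(x,x)=-1$ is a unit and the lift modification exploits that invertibility repeatedly; a hyperbolic pair consists of isotropic vectors, so the same trick does not apply directly, and Lemma \ref{ortho in H^s} handles a single vector, not a pair. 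More seriously, the evaluation $\Den(M_n,H)=1-q^{-2r}$, which carries the entire content of the inductive step, is asserted rather than proven; you have traded the local density $\Den(M_n,M_n)$ for the local density $\Den(M_n,H)$, and ``can be checked directly by reduction mod $\pi$'' or appealing to ``a classical Siegel-series value whose closed form is well known'' is not a proof and in any case needs a citation. If you grant the recursion and the value of $\Den(M_n,H)$ (or, equivalently, the Cho--Yamauchi-type factorization underlying \cite{HLSY}*{Lemma 5.13}), the rest of your argument closes cleanly; without that input the inductive step is essentially a restatement of the problem.
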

\begin{proof}
    By \cite{LL22}*{Lemma 2.15}, if $n=2r$ is even, we get $\Den(H^r,H^r)=\prod\limits_{i=1}^r (1-q^{-2i})$.

    Now assume $n=2r+1$ is odd. By \cite{HLSY}*{Lemma 5.13} we have
    \begin{equation*}
    \begin{split}
    \Den(M_n,M_n)&=\Den(I_1^1,M_n,q^{-2r})\\
    &=\( \prod_{\ell=0}^{r-1}(1-q^{2\ell-2r})\)\Den(I_1^1,I_1^1,1)=\( \prod_{i=1}^r(1-q^{-2i})\)\Den(I_1^1,I_1^1)
    \end{split}
    \end{equation*}
    and by \cite{HLSY}*{Corollary 5.17} we have
    \begin{equation*}
        \Den(I_1^1,I_1^1)=2.
    \end{equation*}
\end{proof}

\begin{proposition}\label{analytic reduction}
	Let $n+1=2r+2\geq 2$ be even. Let $L^{\#}\subset\dV_{n+1}$ be a hermitian $O_F$-lattice and assume $L^{\#}$ has a hermitian lattice decomposition $L^{\#}=L\obot I_1^{-1}$. Then $$\partial \operatorname{Den}(L)=\frac{1}{2}\partial\Den(L^{\#}).$$
\end{proposition}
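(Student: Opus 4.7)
The plan is to prove the polynomial factorization
\[
\Den(M_{n+1}, L^{\#}, X) = \Den(M_{n+1}, I_1^{-1}, X)\cdot\Den(M_n, L, X)
\]
by fibering hermitian embeddings $L^{\#}\hookrightarrow M_{n+1}\obot H^s$ over the image of the $I_1^{-1}$ summand, and then to differentiate at $X=1$. Any such embedding $\phi$ decomposes as $(\phi|_L,\phi|_{I_1^{-1}})$, and since $L^{\#} = L\obot I_1^{-1}$ the hermitian compatibility amounts to: (i) $\phi|_L$ preserves $h_L$; (ii) $\phi|_{I_1^{-1}}$ sends the generator to a vector $w$ of norm $-1$; and (iii) $\phi|_L(L)\perp w$. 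Since $-1$ is a unit and $p$ is odd, such a $w$ is automatically primitive in $N := M_{n+1}\obot H^s = H^{r+1+s}$, so Corollary \ref{normal basis and ortho} with $\beta=-1$ identifies $w^{\perp}$ with $H^{r+s}\obot I_1^1 = M_n\obot H^s$.

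To descend this fibration to $O_{F_0}/\pi_0^d$-points I would use Hensel's lemma (applicable since $p$ is odd and the gradient of the norm form at any primitive $w$ is nondegenerate) to bijectively associate mod-$\pi_0^d$ approximate norm-$(-1)$ vectors to exact ones, compatibly with the orthogonality condition. This yields the counting identity
\[
|\Herm_{L^{\#},N}(O_{F_0}/\pi_0^d)| = |\Herm_{I_1^{-1},N}(O_{F_0}/\pi_0^d)|\cdot|\Herm_{L,\,M_n\obot H^s}(O_{F_0}/\pi_0^d)|,
\]
which, combined with the additive dimension identity $d_{L^{\#},N} = d_{I_1^{-1},N} + d_{L,M_n\obot H^s}$ verified directly via Remark \ref{dim}, gives the polynomial factorization upon taking $d\to\infty$.

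Both $L^{\#}$ and $L$ are nonsplit, so $\Den(M_{n+1}, L^{\#}, 1) = \Den(M_n, L, 1) = 0$, while $\Den(M_{n+1}, I_1^{-1}, 1)$ is nonzero because the split lattice $M_{n+1}$ obviously represents $-1$. Applying $-2\tfrac{\rd}{\rd X}|_{X=1}$ to the factorization yields $\Den'(L^{\#}) = \Den(M_{n+1}, I_1^{-1})\cdot\Den'(L)$, so dividing by $\Den(M_{n+1},M_{n+1})$ reduces the proposition to the numerical identity
\[
\Den(M_{n+1}, I_1^{-1}) = \frac{2\,\Den(M_{n+1},M_{n+1})}{\Den(M_n,M_n)} = 1 - q^{-2(r+1)}.
\]
The last equality follows from Lemma \ref{density normalization}, and the first value can be verified by a direct Hensel-style count of vectors $v\in H^{r+1}$ with $h(v,v) = -1$ (the base case $r = 0$ gives $q^3 - q$ such $v$ modulo $\pi_0$, hence $1 - q^{-2}$ after the appropriate normalization), or alternatively by invoking standard representation-density formulas such as \cite{HLSY}*{Lemma~5.13}.

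The main obstacle I anticipate is the mod-$\pi_0^d$ bookkeeping in the fibration step: I must verify that, given only an approximate norm-$(-1)$ vector $w$, the set of $\phi|_L$ satisfying approximate orthogonality to $w$ is in natural bijection (not merely measure-theoretic equality) with $\Herm_{L, M_n\obot H^s}(O_{F_0}/\pi_0^d)$, with no spurious correction arising from the choice of Hensel lift. Once this clean counting factorization is in hand, the rest of the argument is a formal manipulation of density polynomials.
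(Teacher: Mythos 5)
Your proposal is correct and follows essentially the same route as the paper: fiber the hermitian embeddings of $L^{\#}$ over the image of the $I_1^{-1}$ summand, identify each fiber with embeddings of $L$ into the orthogonal complement (which by Lemma \ref{ortho in H^s}/Corollary \ref{normal basis and ortho} is $M_n\obot H^s$), deduce the counting identity modulo $\pi_0^d$, pass to the polynomial factorization, and differentiate at $X=1$ using that $L$ is nonsplit. The ``main obstacle'' you flag — verifying that the reduction $M(x)/\pi^{2d}M(x)\to K$ is bijective rather than merely measure-preserving — is precisely the careful part of the paper's argument (choosing a lift $x$ with $M(x)\simeq M_n\obot H^s$ on the nose and then correcting a candidate lift by $\pi^{2d}z$), so your instinct about where the work lies is right.
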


\begin{proof}
	Let $M=H^s$ and choose a standard normal basis $e_1,f_1,e_2,f_2,\dots,e_s,f_s$ of $M$.
	
	
	Let $d\geq 1$. Consider the restriction map
	$$R\colon\operatorname{Herm}_{L^{\#},M}( O_{F_0}/\pi_0^{d})\to\operatorname{Herm}_{I_1^{-1},M}( O_{F_0}/\pi_0^{d}).$$
	We claim that the fibre of this map can be identified with $\operatorname{Herm}_{L,M'}( O_{F_0}/\pi_0^{d})$ where $M'= M_{2s-1}=I_1^1\obot H^{s-1}$.
	
	Let $\phi\in\operatorname{Herm}_{I_1^{-1},M}( O_{F_0}/\pi_0^{d})$. Then $\phi$ is determined by
	$\bar{x}\in M/\pi^{2d}M$ with $\bar{h}(\bar{x},\bar{x})=-1$. For any lift $x\in M$ we have $h(x,x)\equiv -1 \mod\pi^{2d}O_F\in O_{F_0}$. We first modify the lift such that $M(x)\simeq M'$.
 
    Write $h(x,x)=-1+\pi^{2d}\alpha$ for some $\alpha\in O_{F_0}$. From Lemma \ref{ortho in H^s} we can find $y\in M(x)$ with $h(y,y)=-h(x,x)$. Then $x'=x+\pi^{2d}(ax+ay)$ for $a=-\frac{\alpha}{2}$ is also a lift of $\bar{x}$. Now we compute
	$$\begin{aligned}
	h(x',x')&=h((1+\pi^{2d}a)x+\pi^{2d}ay,(1+\pi^{2d}a)x+\pi^{2d}ay)\\
	&=(1+\pi^{2d}a)^2h(x,x)+(\pi^{2d}a)^2h(y,y)\\
	&=(1+2\pi^{2d}a)h(x,x)=-h(x,x)^2.
	\end{aligned}$$
	As $h(x,x)$ is in $O_{F_0}^{\times}$, the submodule $M(x')$ is the same as $M(x'/h(x,x))$, and the latter is just $M'$. We shall fix such a lift and still denote it by $x$ from now on.
	
	Choose an $ O_F$-basis $\ell_1,\dots,\ell_{n}$ for $L$. The fibre over $\phi$ is given by
	$$R^{-1}(\phi)=\left\lbrace \bar{x}_1,\dots,\bar{x}_{n}\in M/\pi^{2d}M \;\middle|\;
    \begin{array}{c}
	    \bar{h}(\bar{x}_i,\bar{x})= 0 \text{ for any }i,\\ \bar{h}(\bar{x}_i,\bar{x}_j)= h(\ell_i,\ell_j) \mod\pi^{2d-1}O_F\text{ for any }i,j
	\end{array}\right\rbrace.$$
	Let $K=\{\bar{y}\in M/\pi^{2d}M \res \bar{h}(\bar{y},\bar{x})= 0 \}$. Clearly there is an injective map $$M(x)/\pi^{2d}M(x)\to M/\pi^{2d}M$$ and the image lies in $K$. We next show that the image is exactly $K$.
	
	For any $\overline{y}\in K$ we need to find a lift of $\bar{y}$ in $M(x)$. Choose any lift $y\in M$, then $h(y,x)\in\pi^{2d-1} O_F$. As $h(x,x)\neq 0$, by Corollary \ref{normal basis and ortho} we have $M\otimes_{ O_F}F\simeq \langle x\rangle_{F}\obot M(x)_{F}$. So we may write $y=y_x+y_r$ according to the decomposition. Then $h(y,x)=h(y_x,x)\in\pi^{2d-1} O_F$. Write $y_x=a'\cdot x$ for some $a'\in F$, we must have $a'\cdot h(x,x)\in\pi^{2d-1} O_F$. As $h(x,x)$ is a unit, $a'\in \pi^{2d-1} O_F$ and $y_x\in \pi^{2d-1}M$. Thus $y_r=y-y_x\in M(x)$. If $a'\in \pi^{2d}O_F$ then $y_r$ is a lift of $\overline{y}$ in $M(x)$. Otherwise suppose $a'=\pi^{2d-1}\lambda$ for some $\lambda\in O_F^{\times}$ and write $h(x,x)=\beta\in O_{F_0}^{\times}$. By Corollary \ref{normal basis and ortho}, we can find a standard normal basis $\sigma_i,\tau_i$ of $M$ such that $x=\sigma_1-\frac{\pi\beta}{2}\tau_1$. Consider $y'=y+\pi^{2d}z$ where $z=\lambda\beta\tau_1$. Then $y'$ is a lift of $\bar{y}$ and
	$$h(y',x)=h(y_x+\pi^{2d}z,x)=\pi^{2d-1}\lambda\beta+\pi^{2d}h(z,x)=\pi^{2d-1}(\lambda\beta+\pi\cdot h(\lambda\beta\tau_1,\sigma_1-\frac{\pi\beta}{2}\tau_1))=0.$$
	Thus $y'$ is a lift of $\bar{y}$ in $M(x)$. We have shown that $$K=M(x)/\pi^{2d}M(x)\simeq M'/\pi^{2d}M'$$
	and consequently $$R^{-1}(\phi)\simeq \operatorname{Herm}_{L,M'}( O_{F_0}/\pi_0^d).$$
	
	As a corollary $$|\operatorname{Herm}_{L^{\#},M_{2s}}( O_{F_0}/\pi_0^d)|=|\operatorname{Herm}_{L,M_{2s-1}}( O_{F_0}/\pi_0^d)|\cdot|\operatorname{Herm}_{I_1^{-1},M_{2s}}( O_{F_0}/\pi_0^d)|.$$
	
	Then by Definition \ref{Den def 1} and Remark \ref{dim},
	$$\begin{aligned}
	\operatorname{Den}(M_{n+1},L^{\#},q^{-2k})&=\operatorname{Den}(M_{n+1}\obot H^k,L^{\#})\\
	&=\operatorname{lim}_{d\to+\infty}\frac{|\operatorname{Herm}_{L^{\#},M_{n+1+2k}}( O_{F_0}/\pi_0^d)|}{q^{d\cdot (n+1)(2(n+1+2k)-n-1)}}\\
	&=\operatorname{lim}_{d\to+\infty}\frac{| \operatorname{Herm}_{L,M_{n+2k}}( O_{F_0}/\pi_0^d) |\cdot| \operatorname{Herm}_{I_1^{-1},M_{n+1+2k}}( O_{F_0}/\pi_0^d) |}{q^{d\cdot (n+1)(n+1+4k)}}\\
	&=\operatorname{lim}_{d\to+\infty}\frac{| \operatorname{Herm}_{L,M_{n+2k}}( O_{F_0}/\pi_0^d) |}{q^{d\cdot n(2(n+2k)-n)}} \cdot
	\frac{| \operatorname{Herm}_{I_1^{-1},M_{n+1+2k}}( O_{F_0}/\pi_0^d) |}{q^{d\cdot 1 \cdot (2(n+1+2k)-1)}}\\
	&=\operatorname{Den}(M_{n}\obot H^k,L)\cdot\operatorname{Den}(M_{n+1}\obot H^k,I_1^{-1})\\
	&=\operatorname{Den}(M_{n},L,q^{-2k})\cdot (1-q^{-1-n-2k})
	\end{aligned}$$
 where the last equality $\operatorname{Den}(M_{n+1}\obot H^k,I_1^{-1})=1-q^{-1-n-2k}$ comes from \cite{LL22}*{Lemma 2.15}.
	
	Thus
	$$\begin{aligned}
	\operatorname{Den}(M_{n+1},L^{\#},X)&=\operatorname{Den}(M_{n},L,X)\cdot(1-q^{-1-n}X)\\
	\operatorname{Den}'(L^{\#})&=\operatorname{Den}'(L)\cdot(1-q^{-1-n})+2\operatorname{Den}(M_{n},L,1)\cdot q^{-1-n}\\&=(1-q^{-1-n})\operatorname{Den}'(L).\quad\quad\text{ the second term vanishes as }L\text{ is nonsplit.}
	\end{aligned}$$
	
	Divide both sides by $\prod\limits_{i=1}^{r+1}(1-q^{-2i})$. By the computation result from Lemma \ref{density normalization}, we get 
    $$\partial\operatorname{Den}(L^{\#})=\frac{\Den'(L^{\#})}{\Den(M_{n+1},M_{n+1})}=\frac{(1-q^{-1-n})\Den'(L)}{\prod\limits_{i=1}^{r+1}(1-q^{-2i})}\stackrel{n+1=2r+2}{=\joinrel=\joinrel=\joinrel=\joinrel=}\frac{\Den'(L)}{\prod\limits_{i=1}^r(1-q^{-2i})}=2\partial\Den(L).$$
\end{proof}

\section{Reduction at geometric side}\label{geom red section}
Throughout the section we let $n=2m\geq 4$ be even.

\subsection{Auxiliary Rapoport--Zink spaces}\label{other condition for even}

	 Following \cite{RSZ18}*{\S9}, to describe the relation between $\mathcal{N}_{n-1}$ and $\mathcal{N}_n$ we first introduce some auxiliary spaces.

\begin{definition}
	We consider the moduli space $\mathcal{P}_n$ over $\operatorname{Spf}O_{\breve{F}}$, sending $S\in\operatorname{Nilp}_{O_{\breve{F}}}$ to the set of isomorphism classes of quadruples $(X,\iota,\lambda,\rho)$ where
    \begin{itemize}
        \item $X$ is a (strict) formal $O_{F_0}$-module over $S$ of dimension $n$ and relative height $2n$;
        \item $\iota$ is an action of $O_F$ on $X$ extending the $O_{F_0}$-action;
        \item $\lambda$ is an $\iota$-compatible polarization of $X$;
        \item $\rho\colon X\times_S\ovS\to\dX'_n\times_{\bar{k}}\ovS$ is an $O_F$-linear quasi-isogeny of height $0$ over $\ovS$ such that $\rho^*(\lambda_{\dX'_n,\ovS})=\lambda_{\ovS}$,
    \end{itemize}
    with the framing object $(\mathbb{X}_n',\iota_{\mathbb{X}_n'},\lambda_{\mathbb{X}_n'})$ to be specified below. Here we impose on the polarization $\lambda$ that
	$$\operatorname{ker}\lambda\subset X[\iota(\pi)]\text{ is of rank }q^{n-2},$$
	and we require the triple $(X,\iota,\lambda)$ to satisfy condition \eqref{even spin cond} below, which is the natural analog of condition \eqref{odd spin cond}.\footnote{In \cite{RSZ18}*{\S9}, the triple is also required to satisfy the Wedge condition \eqref{wedge cond}. It is shown in \cite{Yu19}*{Remark 1} that condition \eqref{even spin cond} implies Wedge condition.}
\end{definition}

  As in \S\ref{condition for n odd}, let $M(X)$ and $M(X^\vee)$ denote the respective Lie algebras of the universal vector extensions of $X$ and $X^\vee$.  Since $\ker \lambda$ is contained in $X[\iota(\pi)]$ and of rank $q^{n-2}$, there is a unique (necessarily $O_F$-linear) isogeny $\lambda'$ such that the composite
\[
X \xra{\lambda} X^\vee \xra{\lambda'} X
\]
is $\iota(\pi)$, and the induced diagram
\[
M(X) \xra{\lambda_*} M(X^\vee) \xra{\lambda'_*} M(X)
\]
then extends periodically to a polarized chain of $O_F \otimes_{O_{F_0}} \cO_S$-modules of type $\Lambda_{\{m-1\}}$.  By \cite{RZ96}*{Theorem~3.16}, \'etale-locally on $S$ there exists an isomorphism of polarized chains
\begin{equation*}
[{}\dotsb \xra{\lambda'_*} M(X) \xra{\lambda_*} M(X^\vee) \xra{\lambda'_*} \dotsb{}] \stackrel{\sim}{\lrarr} \Lambda_{\{m-1\}} \otimes_{O_{F_0}} \cO_S,
\end{equation*}
which in particular gives an isomorphism of $O_F \otimes_{O_{F_0}} \cO_S$-modules
\begin{equation}\label{M(X) triv 2}
M(X) \stackrel{\sim}{\lrarr} \Lambda_{-(m-1)} \otimes_{O_{F_0}} \cO_S.
\end{equation}
Denoting by $\Fil^1 \subset M(X)$ the covariant Hodge filtration for $X$, the analog of \eqref{odd spin cond} we impose is that
\begin{align*}
&\text{\em upon identifying $\Fil^1$ with a submodule of $\Lambda_{-(m-1)} \otimes_{O_{F_0}} \cO_S$ via \eqref{M(X) triv 2}, the line bundle}\\
\intertext{
	\begin{equation}\label{even spin cond}
	\sideset{}{_{\cO_S}^n}\bigwedge \Fil^1 \subset  \tensor[^n]\Lambda{_{-(m-1)}} \otimes_{O_F} \cO_S
	\end{equation}
}
&\text{\em is contained in $L_{-(m-1),-1}^{n-1,1}(S)$, cf.~(\ref{L def}).}
\end{align*}
Just as before, condition \eqref{even spin cond} is independent of the above choice of chain isomorphism by Lemma \ref{L stability}.

As in $\S$\ref{framing objects}, over $\bar{k}$ there are \emph{two} supersingular isogeny classes of framing objects for this moduli problem, distinguished by the splitness of the hermitian space $\dV$ in \eqref{space of special homs}.  To complete the definition of $\cP_n$, we will choose a framing object $\dX'_n$ for which $\dV(\dX'_n)$ is nonsplit.  We take $\dX'_n$ to be the product of the framing object $\dX_{n-1} = \dX_{n-1}^{(1)}$ (cf.\ \eqref{BX_n odd}) and $\ubE$,
\begin{equation}\label{wtBX_n^(1) def}
\bigl(\dX'_n, \iota_{\dX'_n}, \lambda_{\dX'_n} \bigr) := 
\bigl(\dX_{n-1} \times \ubE, \iota_{\dX_{n-1}} \times \iota_{\ubE}, \lambda_{\dX_{n-1}} \times (-\lambda_{\ubE}) \bigr).
\end{equation}
Since $\dV(\dX_{n-1})$ is nonsplit and $n$ is even, it follows from definition that $\dV(\dX'_n)$ is indeed the nonsplit space of dimension $n$.  Furthermore, it is easy to see that $\dX'_n$ satisfies \eqref{even spin cond} because $\dX_{n-1}$ satisfies \eqref{odd spin cond}.

It follows from the general theory of Rapoport--Zink spaces \cite{RZ96} that $\mathcal{P}_n$ is represented by a formal scheme which is formally locally of finite type and essentially proper over $\operatorname{Spf}O_{\breve{F}}$. However it is not regular.

To define the second moduli space we first fix an $O_F$-linear isogeny of degree $q$,
$$\phi_0\colon\mathbb{X}_n\lrarr\mathbb{X}_n'$$
such that $\phi_0^*(\lambda_{\mathbb{X}_n'})=\lambda_{\mathbb{X}_n}$. Since $n\geq 4$, we have
$$\mathbb{X}_n=\mathbb{X}_n'=\mathbb{X}_{n-2}\times\ubE\times\ubE$$
as $O_F$-modules, and we define
\begin{equation}\label{phi0}
\phi_0\coloneqq\operatorname{id}_{\mathbb{X}_{n-2}}\times\begin{pmatrix}
1&\frac{\iota_{\ubE}(\pi)}{2}\\1&-\frac{\iota_{\ubE}(\pi)}{2}
\end{pmatrix}.
\end{equation}
It is easy to check that $\phi_0^*(\lambda_{\mathbb{X}_n'})=\lambda_{\mathbb{X}_n}$.

\begin{definition}
	We consider the moduli space $\mathcal{P}_n'$ for tuples
	$$(X,\iota,\lambda,\rho,X',\iota',\lambda',\rho',\phi\colon X\to X')$$
	where $(X,\iota,\lambda,\rho)$ is a point on $\mathcal{N}_n$, $(X',\iota',\lambda',\rho')$ is a point on $\mathcal{P}_n$, and $\phi$ is an $O_F$-linear isogeny of degree $q$ lifting $\phi_0$ in the sense that the following diagram commutes
	$$\xymatrix{
		X_{\ovS}\ar[rr]^{\bar{\phi}}\ar[d]_{\rho}&&X'_{\ovS}\ar[d]^{\rho'}\\\mathbb{X}_{n,\ovS}\ar[rr]^{\phi_0}&&\mathbb{X}'_{n,\ovS}.}$$
	The notion of isomorphism between tuples as above is the obvious one.
\end{definition}

By definition, there are tautological projection maps
$$\xymatrix{&\mathcal{P}_n'\ar[ld]\ar[rd]^-{\varphi}&\\
	\mathcal{P}_n& &\mathcal{N}_n.}$$

By Proposition \ref{even n decomp lem}, $\cN_n$ decomposes into a disjoint union $\cN_n = \cN_n^+ \amalg \cN_n^-$.  Pulling back along $\varphi$, we obtain a decomposition
\begin{equation}\label{wtCN'pm}
\cP_n' = (\cP_n')^+ \amalg (\cP_n')^-.
\end{equation}

\begin{theorem}[\cite{RSZ18}*{Theorem 9.3}]\label{wtCN'pm isom}
	Writing $(\cP_n')^\pm$ for either of the summands in \eqref{wtCN'pm}, the projection $\cP_n' \to \cP_n$ induces an isomorphism
	\[
	(\cP_n')^\pm \stackrel{\sim}{\lrarr} \cP_n.
	\]
\end{theorem}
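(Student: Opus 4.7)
The plan is to construct an inverse morphism $\cP_n \to (\cP_n')^\pm$ to the projection, for each sign. The starting observation is that once $(X', \iota', \lambda', \rho') \in \cP_n(S)$ and $(X, \iota, \lambda, \rho) \in \cN_n(S)$ are both fixed, the isogeny $\phi$ is rigidly determined: it must lift the quasi-isogeny $(\rho')^{-1} \circ \phi_0 \circ \rho$ on the special fibre, and such a lift, when it exists, is unique. Thus the problem reduces to showing that for each $(X', \iota', \lambda', \rho') \in \cP_n(S)$ there is a unique $(X, \iota, \lambda, \rho) \in \cN_n(S)$ of the correct sign for which the induced quasi-isogeny $X \to X'$ is an honest isogeny of degree $q$.

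First I would settle the $\bar k$-point count via Dieudonn\'e theory. Points of $\cN_n$ and $\cP_n$ over $\bar k$ correspond to $O_{\breve F}$-lattices $M, M' \subset \dN$ that are respectively $\pi$-modular and almost $\pi$-modular, and the condition that the framing quasi-isogeny lifts to an isogeny of degree $q$ translates into an inclusion $M \subset M'$ at a prescribed position matching the Dieudonn\'e-theoretic shape of $\ker \phi_0$. A direct linear-algebra calculation (in the same spirit as the normal-basis manipulations in \S4) produces exactly two such $\pi$-modular lattices $M$ above each $M'$, and the parity of the length of $(M + \dM)/\dM$ distinguishes them---precisely the invariant separating $\cN_n^+$ from $\cN_n^-$ in Proposition \ref{even n decomp lem}. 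Hence on each sign component the projection is bijective on $\bar k$-points.

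The remaining deformation step I would handle via Grothendieck--Messing theory: given $(X', \iota', \lambda', \rho') \in \cP_n(S)$, the rational Dieudonn\'e crystal of the target $X$ is identified with that of $X'$ via $\phi$, and the integral crystal is singled out by the sign choice; the Hodge filtration of $X'$ then canonically determines the Hodge filtration of $X$, producing the desired quadruple. Combined with the bijection on $\bar k$-points, this yields the claimed isomorphism $(\cP_n')^\pm \stackrel{\sim}{\to} \cP_n$. The main technical obstacle is the compatibility of local conditions---namely, checking that condition \eqref{even spin cond} on $(X', \iota', \lambda')$ corresponds, under this Dieudonn\'e-module correspondence, to the Kottwitz, wedge, and spin conditions \eqref{Kott cond}, \eqref{wedge cond}, \eqref{spin cond} defining $\cN_n$---which is a local-model computation reducible to Smithling's analysis cited in Remark \ref{odd N_n crit}.
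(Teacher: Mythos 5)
The paper does not prove this theorem; it is imported from \cite{RSZ18}*{Theorem 9.3}, so there is no internal argument to compare against. As a reconstruction, your outline has a reasonable shape: rigidity of the lifted isogeny once $X$ and $X'$ are both fixed, a Dieudonn\'e lattice count over $\bar{k}$ (two $\pi$-modular lattices above each point of $\cP_n$, separated by the parity of \eqref{parity test lattice}), and then Grothendieck--Messing for deformations. The $\bar{k}$-point step is the right starting point.

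The deformation step as you state it, however, conceals the real content. Given a lift $\Fil^1(\widetilde{X'})$ of the Hodge filtration of $X'$ along a PD-thickening $R \to S$, the isogeny data produce only the sandwich
\[
\dD(\phi')_R\bigl(\Fil^1(\widetilde{X'})\bigr) \;\subset\; \Fil^1(\widetilde{X}) \;\subset\; \dD(\phi)_R^{-1}\bigl(\Fil^1(\widetilde{X'})\bigr),
\]
and these bounds do not agree in general---at the ``worst point'' of the local model there is a genuine gap between them. The sign/parity invariant is a discrete datum defined on geometric points; it cannot, by itself, single out a locally direct summand of $\dD(X)_R$ over a Noetherian local $R$. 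Consequently the wedge and spin conditions must enter the uniqueness argument, not be verified as an afterthought, and what you call ``the main technical obstacle'' is exactly where the theorem lives. The suggestion to ``reduce to Smithling's analysis'' also glosses over the fact that the relevant local model is the one for parahoric level $\{m-1,m\}$, and the decomposition of that local model into two smooth copies of the $\{m-1\}$-level local model is essentially the statement being proved. Your proposal is a sound plan of attack, but as written it asserts rather than establishes the key uniqueness of $\Fil^1(\widetilde{X})$.
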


We denote by $\psi^{\pm}\colon\cP_n\stackrel{\sim}{\lrarr}(\cP'_n)^{\pm}$ the inverse isomorphism.

There is a natural closed embedding
$$\widetilde{\delta}\colon \mathcal{N}_{n-1}\lrarr\mathcal{P}_n$$
sending a quadruple $(X',\iota',\lambda',\rho')\in \mathcal{N}_{n-1}(S)$ for $S\in\operatorname{Nilp}_{O_{\breve{F}}}$ to $$(X'\times\ucE_S,\iota'\times\iota_{\ucE},\lambda'\times(-\lambda_{\ucE}),\rho'\times\rho_{\ucE})\in\mathcal{P}_n(S).$$
It is straightforward to verify that $\widetilde{\delta}$ is well-defined.

Consider the composite morphism
\begin{equation}\label{def delta}
    \delta^{\pm}\colon\mathcal{N}_{n-1}\stackrel{\widetilde{\delta}}{\lrarr}\mathcal{P}_n\stackrel{\psi^{\pm}}{\lrarr}(\mathcal{P}_n')^{\pm}\stackrel{\varphi}{\lrarr}\mathcal{N}_n.
\end{equation}
Explicitly, the morphism $\delta^{\pm}$ sends $(X',\iota',\lambda',\rho')\in\cN_{n-1}(S)$ for $S\in\operatorname{Nilp}_{O_{\breve{F}}}$ to $(X,\iota,\lambda,\rho)\in\cN_n^{\pm}(S)$ such that there exists an $O_F$-isogeny $\phi\colon X\to X'\times\ucE_S$ of degree $q$ lifting $\phi_0$.

\begin{proposition}[\cite{RSZ18}*{Proposition 12.1}]
	The morphism $\delta^{\pm}$ is a closed embedding.
\end{proposition}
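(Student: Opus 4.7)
The plan is to work with the factorization $\delta^{\pm} = \varphi \circ \psi^{\pm} \circ \widetilde{\delta}$ from \eqref{def delta}. Since $\psi^{\pm}\colon \cP_n \xrightarrow{\sim} (\cP_n')^{\pm}$ is an isomorphism by Theorem \ref{wtCN'pm isom}, the problem splits into two independent statements: first, that $\widetilde{\delta}\colon \cN_{n-1} \to \cP_n$ is a closed embedding; second, that the projection $\varphi$ restricted to a fixed component $(\cP_n')^{\pm}$ is a closed embedding into $\cN_n$.

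For the first statement, the map $\widetilde{\delta}$ sends $(X', \iota', \lambda', \rho') \in \cN_{n-1}(S)$ to its ``product with $\ucE_S$''. A point $(Y, \iota_Y, \lambda_Y, \rho_Y) \in \cP_n(S)$ lies in the image exactly when the framing $\rho_Y$ is compatible with a decomposition of $Y$ as a product with $\ucE_S$, and such a decomposition is then uniquely determined by the framing. By the rigidity of the canonical lifting $\ucE$ and Grothendieck--Messing theory, the existence of such a decomposition is a closed condition, while its uniqueness implies that $\widetilde{\delta}$ is a monomorphism; combined, these yield a closed embedding.

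For the second statement, I would show that $\varphi|_{(\cP_n')^{\pm}}$ is a proper monomorphism, since a proper monomorphism of locally Noetherian formal schemes is a closed immersion. Properness is inherited from the essential properness of $\cN_n$ and $\cP_n$ and the finiteness of the auxiliary datum $\phi$. For the monomorphism property, over a fixed point $(X, \iota, \lambda, \rho) \in \cN_n(S)$ a preimage in $\cP_n'$ is determined by the kernel $K = \Ker\phi \subset X[\iota(\pi)]$, which must be an $O_F$-stable finite flat subgroup scheme of order $q$ lifting $\Ker\phi_0$ and compatible with the polarizations. There are at most finitely many such $K$, and the role of the $\pm$ decomposition of \eqref{wtCN'pm} is to cut each fiber down to a single $K$.

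The main obstacle is the last uniqueness claim. I expect to resolve it by Dieudonn\'e-theoretic bookkeeping: the parity of the length of \eqref{parity test lattice} that distinguishes $\cN_n^+$ from $\cN_n^-$ in Proposition \ref{even n decomp lem} changes in a controlled way under the isogeny $\phi\colon X \to X/K$, and the two candidate kernels should produce opposite shifts in this parity. Together with the lattice description of geometric points recalled in \S\ref{details of D modules}, this should match each component $(\cP_n')^{\pm}$ with a unique $K$ and complete the proof.
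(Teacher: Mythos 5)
The paper does not prove this proposition; it is cited verbatim from \cite{RSZ18}*{Proposition 12.1}, so there is no in-paper argument to compare against. Your factorization $\delta^{\pm}=\varphi\circ\psi^{\pm}\circ\widetilde\delta$ is the right starting point and the sketch for $\widetilde\delta$ is plausible in outline, but the argument for $\varphi|_{(\cP_n')^{\pm}}$ being a monomorphism breaks down exactly at the step where you invoke the $\pm$ decomposition.

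The decomposition \eqref{wtCN'pm} is, by construction, the pullback along $\varphi$ of $\cN_n=\cN_n^+\amalg\cN_n^-$: the $\pm$-label of a point $(X,\iota,\lambda,\rho,X',\iota',\lambda',\rho',\phi)$ of $\cP_n'$ is by definition the $\pm$-label of its image $(X,\iota,\lambda,\rho)$ in $\cN_n$, and depends on nothing else. Consequently the entire fiber of $\varphi$ over a fixed $(X,\iota,\lambda,\rho)$ lies in a single component $(\cP_n')^{\pm}$, and two distinct $O_F$-stable lifts $K_1,K_2\subset X[\iota(\pi)]$ of $\Ker\phi_0$ yield two points of $\cP_n'$ with the \emph{same} parity. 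The parity you cite is the length of $\bigl(M + \dM \otimes_{O_{\breve F_0}} W(K)\bigr)/\dM \otimes_{O_{\breve F_0}} W(K)$, an invariant of the $\cN_n$-side Dieudonn\'e lattice $M$ alone; it does not change as $K$ varies over the fiber, so the ``opposite parity shifts'' you expect cannot occur. What the $\pm$ decomposition does split is the fiber of the \emph{other} projection $\cP_n'\to\cP_n$ (each such fiber has one point in each component — that is precisely Theorem \ref{wtCN'pm isom}), which is a different map. So restricting $\varphi$ to $(\cP_n')^{\pm}$ does not cut down its fibers at all. To salvage the strategy you would have to prove directly — without any appeal to parity — that for a given $(X,\iota,\lambda,\rho)$ the admissible lift $K$ (equivalently, the overlattice $N\supset^1 M$ satisfying the $\cP_n$-conditions) is unique; that uniqueness is the genuine content still to be supplied, and the proposal as written offers no mechanism for it.
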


The isogeny $\phi_0\colon\dX_n\to\dX'_n=\dX_{n-1}\times\ubE$ gives an isomorphism of hermitian spaces
\begin{equation}\label{embed of dV}
    \phi_0\colon\mathbb{V}_n\stackrel{\sim}{\lrarr}\mathbb{V}_{n-1}\obot\langle f\rangle_F
\end{equation}
where $f=\operatorname{id}_{\ubE}$ has hermitian norm $-1$. This allows us to have an embedding of hermitian spaces $\dV_{n-1}\lrarr\dV_{n}$. Consider $u=(0_{n-2},\frac{\iota_{\ubE}(\pi)}{2},-1)\in\mathbb{V}_n$ which is of hermitian norm $\pi^2$. Then under $\phi_0$, $u$ is mapped to $\pi f$.

The first main result of the section is the following theorem.

\begin{theorem}\label{identify lower RZ space with special divisor}
	The closed embedding $\delta^{\pm}$ identifies $\mathcal{N}_{n-1}$ with $\cZ_n(u)\cap\mathcal{N}_n^{\pm}$.
\end{theorem}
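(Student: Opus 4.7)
The plan is to execute the three-step strategy already outlined in the introduction, now specialized to the present setting (with $n$ even and source $\cN_{n-1}$). The morphism $\delta^{\pm}$ is already known to be a closed embedding by \cite{RSZ18}*{Proposition~12.1}, so the task is to identify its image precisely with $\cZ_n(u) \cap \cN_n^{\pm}$.

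First, I would verify that $\delta^{\pm}$ factors through $\cZ_n(u)$. A point in the image of $\delta^{\pm}$ carries, by construction, an $O_F$-isogeny $\phi\colon X \to X' \times \ucE_S$ of degree $q$ lifting $\phi_0$. Under the identification \eqref{embed of dV}, the element $u$ corresponds to $\pi f$, i.e.\ $\pi$ times the canonical inclusion of the $\ubE$-factor. Using that $\ker \phi \subset X[\iota(\pi)]$ (which follows from the degree of $\phi$ together with the compatibility $\phi^{*}\lambda_{X'} = \lambda_X$), the composite $\ucE_S \xra{\iota_{\ucE}(\pi)} \ucE_S \hra X' \times \ucE_S$ lifts uniquely through $\phi$ to an honest homomorphism $\ucE_S \to X$, which reduces modulo the nilpotent ideal to $\rho^{-1} \circ u$. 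By Definition \ref{z and y cycles} this shows that $\delta^{\pm}$ lands in $\cZ_n(u) \cap \cN_n^{\pm}$.

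Second, I would prove that the induced map on $K$-valued points is bijective for every perfect extension $K/\bar k$. Both $\cN_{n-1}(K)$ and $(\cZ_n(u) \cap \cN_n^{\pm})(K)$ admit explicit descriptions in terms of certain hermitian lattices in rational Dieudonn\'e modules; on the target side the condition of meeting $\cZ_n(u)$ translates into containment of the framing image of $u$, and the $\pm$-component is controlled by the parity condition on \eqref{parity test lattice}. Transferring these data through the isogeny $\phi_0$ yields a natural bijection with the $\pi$-modular-type lattices describing points of $\cN_{n-1}(K)$, and this bijection matches the one induced by $\delta^{\pm}$.

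Third, I would conclude by a formal-smoothness argument. The space $\cN_{n-1}$ is formally smooth of relative dimension $n-2$ over $\Spf O_{\breve F}$, while $\cZ_n(u) \cap \cN_n^{\pm}$ is a relative divisor in the formally smooth $\cN_n^{\pm}$ of the same relative dimension (by the tangent space computation announced in Proposition \ref{tangent space of special divisor}, applicable since $h(u,u) = \pi^2$ lies in $\pi_0 O_{F_0}$); in fact it is itself formally smooth of relative dimension $n-2$. A closed embedding between two formally smooth formal schemes of the same relative dimension over $\Spf O_{\breve F}$ that is bijective on geometric points must be an isomorphism, which completes the proof. The main obstacle will be Step~2: the lattice-theoretic bookkeeping needed to match the condition defining $\cZ_n(u)^{\pm}$ with the data parametrizing $\cN_{n-1}(K)$, and in particular to verify that the parity condition on the $\cN_n^{\pm}$-component is compatible with the two choices of sign in $\delta^{\pm}$.
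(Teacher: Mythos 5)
Your proposal follows exactly the three-step strategy the paper uses in \S\ref{proof of theorem 5.5}: factoring $\delta^{\pm}$ through $\cZ_n(u)^{\pm}$ by lifting $u$ via the degree-$q$ isogeny $\phi$ (your ``lift through $\phi$'' of $(0,\iota_{\ucE}(\pi))$ is equivalent to the paper's explicit construction of the complementary isogeny $\phi'$ with $\phi'\circ\phi=\iota(\pi)$), then surjectivity on geometric points via Dieudonn\'e-lattice bookkeeping, then concluding by comparing formal smoothness and relative dimension (the paper argues formal \'etaleness via $\widehat{\Omega}^1$, you invoke the equivalent regular-local-ring fact). This is essentially the paper's proof; the only real gap is that you defer Step~2, which is indeed where most of the work lies (constructing $\dL$ from $\dM$ as $\pi((\pi e)^\perp)^*$ and verifying the almost-$\pi$-modular and Verschiebung conditions).
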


For the rest of the section we will state everything for the morphism $\delta^+$, although the same argument applies to $\delta^-$ as well. For simplicity we set $\cZ_n(u)^+\coloneqq\cZ_n(u)\cap\cN_n^+$.

\subsection{Description of Dieudonn\'e modules}\label{details of D modules}

Before going into the proof of Theorem \ref{identify lower RZ space with special divisor}, we need to do some preparations. Let $r\geq 1$ be any integer. Firstly we give a description of $\bar{k}$-points on $\mathcal{N}_r$ using Dieudonn\'e theory.

Let $\mathbb{N}_r, \overline{\mathbb{N}}_1$ denote the covariant rational Dieudonn\'e modules of $\mathbb{X}_r$ and $\ubE$ respectively. Let $F$ and $V$ be the Frobenius operator and Verschiebung on $\mathbb{N}_r$. The polarization $\lambda_{\mathbb{X}_r}$ induces a nondegenerate alternating $\breve{F_0}$-bilinear form $\langle-,- \rangle$ on $\mathbb{N}_r$ satisfying
$$\langle Fx,y\rangle=\langle x,Vy\rangle^{\sigma}\text{ for all }x,y\in\mathbb{N}_r$$
where $\sigma$ denotes the extension of Frobenius operator on $W_{O_{F_0}}(\bar{k})=O_{\breve{F}_0}$ to $\breve{F_0}$. The action $\iota_{\mathbb{X}_r}$ makes $\mathbb{N}_r$ into an $\breve{F}$-vector space such that the $\breve{F}$-action commutes with $F$ and $V$ and
$$\langle ax,y\rangle=\langle x,\bar{a}y\rangle\text{ for all }x,y\in\mathbb{N}_r, a\in\breve{F}.$$
The form
$$h(x,y)\coloneqq \langle \pi x,y\rangle+\pi\langle x,y\rangle, x,y\in\mathbb{N}_r$$
then makes $\mathbb{N}_r$ into an $\breve{F}/\breve{F_0}$-hermitian space of dimension $r$. Similarly $\overline{\mathbb{N}}_1$ is made into an $\breve{F}/\breve{F_0}$-hermitian space. 

By Dieudonn\'e theory, for a perfect extension $K$ of $\bar{k}$, the set of $K$-points on $\mathcal{N}_r$ identifies with the set of $O_{\breve{F}}\otimes_{O_{\breve{F}_0}}W_{O_{F_0}}(K)$-lattices $M\subset \mathbb{N}_r\otimes_{O_{\breve{F}_0}}W_{O_{F_0}}(K)$ such that$\colon$
\begin{itemize}
	\item [$\bullet$] if $r$ is even, then $M$ is $\pi$-modular, and we have
	\begin{equation}\label{VM}
	\pi_0 M\subset VM\subset M, VM\subset^1VM+\pi M;
	\end{equation}
	\item [$\bullet$] if $r$ is odd, then $M$ is almost $\pi$-modular, and we have
	\begin{equation}\label{VModd}
	\pi_0 M\subset VM\subset M, VM\subset^{\leq 1}VM+\pi M.
	\end{equation}
\end{itemize}
In what follows, for simplicity, we only state our results for $K=\bar{k}$, although all results hold for general perfect extension $K$ of $\bar{k}$ as well.

Note that if $r$ is even and $M$ is $\pi$-modular, then $M\simeq \pi H^{r/2}$. In this case we call an $O_{\breve{F}}$-basis $e_1,f_1,\dots,e_{r/2},f_{r/2}$ a \textit{standard normal basis} of $M$ if $\pi^{-1}e_1,\pi^{-1}f_1,\dots,\pi^{-1}e_{r/2},\pi^{-1}f_{r/2}$ is a standard normal basis of $H^{r/2}$ as in Definition \ref{standard normal basis}; if $r$ is odd and $M$ is almost $\pi$-modular, then $M\simeq \pi H^{(r-1)/2}\obot I_1^1$. In this case we call an $O_{\breve{F}}$-basis $e_1,f_1,\dots,e_{(r-1)/2},f_{(r-1)/2},\varphi$ a \textit{standard normal basis} of $M$ if $e_1,f_1,\dots,e_{(r-1)/2},f_{(r-1)/2}$ is a standard normal basis of $\pi H^{(r-1)/2}$ and $\varphi$ is an $O_{\breve{F}}$-basis of $I_1^1$ with hermitian norm $1$ and orthogonal to all other $e_i,f_j$.

Let $x\in\mathbb{V}_r$ and set $x^*$ to be the quasi-morphism
$$\mathbb{X}_r\stackrel{\lambda_{\mathbb{X}_r}}{\lrarr}\mathbb{X}_r^{\vee}\stackrel{x^\vee}{\lrarr}\ubE^{\vee}\stackrel{\lambda_{\ubE}^{-1}}{\lrarr}\ubE.$$
Then $x^*$ is the adjoint of $x$ with respect to the polarizations, and
$$\langle x\alpha,\beta\rangle=\langle \alpha,x^*\beta\rangle\text{ for any }\alpha\in\overline{\mathbb{N}}_1\text{ and }\beta\in\mathbb{N}_r.$$
From this we can relate the hermitian form on $\mathbb{N}_r$ and $\mathbb{V}_r$. If $x,y\in\mathbb{V}_r$ then
\begin{equation}\label{relation of herm forms}
    h(xe,ye)=\delta\cdot h(x,y).
\end{equation}

\begin{lemma}\label{h(vx,vy)}
    Let $x,y\in\dN_r$. Then we have
    \begin{itemize}
        \item [(1)] $h(Vx,Vy)=0$ if and only if $h(Fx,Fy)=0$, if and only if $h(x,y)=0$;
        \item [(2)] $h(Vx,Vy)\in \pi_0^a O_{\breve{F}}$ if and only if $h(Fx,Fy)\in\pi_0^a O_{\breve{F}}$, if and only if $h(x,y)\in \pi_0^{a-1}O_{\breve{F}}$.
    \end{itemize}
\end{lemma}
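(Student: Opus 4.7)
The plan is to reduce both statements to the single identity
\[
h(Vx,Vy)=\pi_0\cdot\sigma^{-1}\bigl(h(x,y)\bigr),\qquad h(Fx,Fy)=\pi_0\cdot\sigma\bigl(h(x,y)\bigr),
\]
after which, since $\sigma$ is an automorphism of $\breve F$ fixing $\pi_0$ (and hence preserving each ideal $\pi_0^a O_{\breve F}$), the vanishing and divisibility assertions follow at once.

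First I would record two preliminary compatibilities. The $O_F$-action commutes with $F$ and $V$ by the definition of strict $O_F$-module, so $V(\pi x)=\pi\,Vx$ and $F(\pi x)=\pi\,Fx$. Next, from the relation $\langle Fx,y\rangle=\langle x,Vy\rangle^{\sigma}$ recalled in the text, substituting $x\mapsto Vx$ and using $FV=\pi_0$ gives
\[
\pi_0\langle x,y\rangle=\langle FVx,y\rangle=\langle Vx,Vy\rangle^{\sigma},
\]
hence $\langle Vx,Vy\rangle=\pi_0\,\sigma^{-1}\bigl(\langle x,y\rangle\bigr)$ (using $\sigma(\pi_0)=\pi_0$). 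Substituting $y\mapsto Fy$ and using $VF=\pi_0$ gives similarly $\langle Fx,Fy\rangle=\pi_0\,\sigma\bigl(\langle x,y\rangle\bigr)$.

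Now I compute, using the above compatibilities and the formula $h(x,y)=\langle \pi x,y\rangle+\pi\langle x,y\rangle$:
\[
\begin{aligned}
h(Vx,Vy)&=\langle V(\pi x),Vy\rangle+\pi\langle Vx,Vy\rangle\\
&=\pi_0\,\sigma^{-1}\bigl(\langle \pi x,y\rangle\bigr)+\pi\cdot\pi_0\,\sigma^{-1}\bigl(\langle x,y\rangle\bigr)\\
&=\pi_0\,\sigma^{-1}\bigl(\langle \pi x,y\rangle+\pi\langle x,y\rangle\bigr)=\pi_0\,\sigma^{-1}\bigl(h(x,y)\bigr),
\end{aligned}
\]
since $\sigma$ fixes $F\subset\breve F$ and in particular $\pi$. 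The same calculation with $F$ in place of $V$ and $\sigma$ in place of $\sigma^{-1}$ yields $h(Fx,Fy)=\pi_0\,\sigma\bigl(h(x,y)\bigr)$.

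Finally, since $\sigma$ and $\sigma^{-1}$ are automorphisms of $\breve F$ preserving $O_{\breve F}$ and each $\pi_0^a O_{\breve F}$, the equalities $h(Vx,Vy)=\pi_0\,\sigma^{-1}(h(x,y))$ and $h(Fx,Fy)=\pi_0\,\sigma(h(x,y))$ show that each of $h(Vx,Vy)$ and $h(Fx,Fy)$ lies in $\pi_0^a O_{\breve F}$ if and only if $h(x,y)\in\pi_0^{a-1}O_{\breve F}$, which covers assertion (2) as well as the vanishing statement (1) (taking $a\to\infty$). There is no genuine obstacle here; the only point requiring care is verifying that $\sigma$ acts trivially on $\pi$, which is true because $\sigma$ is (by construction) the Frobenius of $\breve F/F$ and hence fixes $F$.
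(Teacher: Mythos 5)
Your proof is correct and follows essentially the same route as the paper: expand $h$ in terms of the alternating form $\langle\text{ },\text{ }\rangle$ and use the $F$-$V$ compatibilities $\langle Fx,y\rangle=\langle x,Vy\rangle^\sigma$ and $FV=VF=\pi_0$. Your packaging of the computation as the closed-form identities $h(Vx,Vy)=\pi_0\,\sigma^{-1}(h(x,y))$ and $h(Fx,Fy)=\pi_0\,\sigma(h(x,y))$ (which requires the — correctly justified — observation that $\sigma$, extended $F$-linearly to $\breve F$, fixes $\pi$) is a slightly cleaner endpoint than the paper's term-by-term comparison of the $\breve F_0$- and $\pi\breve F_0$-components, but it is the same underlying argument.
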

\begin{proof}
    We have
    \begin{align*}
        h(Vx,Vy)&=\langle \pi Vx,Vy\rangle+\pi\langle Vx,Vy\rangle
        =\langle F\pi Vx,y\rangle^{\sigma^{-1}}+\pi\langle FVx,y\rangle^{\sigma^{-1}}\\
        &=\langle \pi\pi_0x,y\rangle^{\sigma^{-1}}+\pi\langle \pi_0 x,y\rangle^{\sigma^{-1}}
        =\pi_0\langle \pi x,y\rangle^{\sigma^{-1}}+\pi_0 \pi\langle x,y\rangle^{\sigma^{-1}},\\
        h(Fx,Fy)&=\langle \pi Fx,Fy\rangle+\pi\langle Fx,Fy\rangle
        =\langle \pi x,VFy\rangle^{\sigma}+\pi\langle x,VFy\rangle^{\sigma}\\
        &=\langle \pi x,\pi_0 y\rangle^{\sigma}+\pi\langle x,\pi_0 y\rangle^{\sigma}
        =\pi_0\langle \pi x, y\rangle^{\sigma}+\pi_0\pi\langle x,y\rangle^{\sigma},\\
        h(x,y)&=\langle \pi x,y\rangle+\pi\langle x,y\rangle.
    \end{align*}

    For $(1)$, $h(Vx,Vy)=0$, $h(Fx,Fy)=0$ and $h(x,y)=0$ are all equivalent to $\langle \pi x,y\rangle=\langle x,y\rangle=0$.

    For $(2)$, $h(Vx,Vy)\in \pi_0^a O_{\breve{F}}$, $h(Fx,Fy)\in\pi_0^a O_{\breve{F}}$ and $h(x,y)\in\pi_0^{a-1}O_{\breve{F}}$ are all equivalent to $\langle \pi x,y\rangle$ and $\langle x,y\rangle$ lie in $\pi_0^{a-1}O_{\breve{F}}$.
\end{proof}

\begin{corollary}\label{V and dual}
    Let $L\subset\dN_r$ be a hermitian lattice. Then we have
    \begin{itemize}
        \item [(1)] $\pi L^{*}=(\pi^{-1}L)^*$;
        \item [(2)] $V L^*=\pi_0 (VL)^*$.
    \end{itemize}
\end{corollary}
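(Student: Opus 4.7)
The plan is to deduce both identities directly from the definition of the dual lattice together with the computations already packaged in Lemma~\ref{h(vx,vy)}; no further input should be needed.

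For part~(1), I would unfold both sides as subsets of $\dN_r$ and check that they describe the same condition on $z$. Using $\bar\pi=-\pi$ and the sesquilinearity of $h$, the membership condition $z\in (\pi^{-1}L)^*$ becomes
$$h(z,\pi^{-1}y)=\overline{\pi^{-1}}\,h(z,y)=-\pi^{-1}h(z,y)\in O_{\breve F}\qquad\text{for every }y\in L,$$
which is equivalent to $h(z,y)\in\pi O_{\breve F}$ for every $y\in L$. Similarly $z\in\pi L^*$ means $\pi^{-1}z\in L^*$, which unfolds to $\pi^{-1}h(z,y)\in O_{\breve F}$, i.e.\ again $h(z,y)\in\pi O_{\breve F}$. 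The two lattices coincide.

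For part~(2), I would first observe that $VL$ is an $O_{\breve F}$-lattice in $\dN_r$: $V$ commutes with the $O_F$-action and is $\sigma^{-1}$-semilinear over $O_{\breve F_0}$, while $\sigma$ stabilizes $O_{\breve F_0}$. Since $\pi_0\in O_{F_0}$ is a central scalar, one has
$$\pi_0(VL)^*=\{\,v\in\dN_r\mid h(v,w)\in\pi_0 O_{\breve F}\text{ for all }w\in VL\,\}.$$
If $x\in L^*$, then $h(x,y)\in O_{\breve F}$ for every $y\in L$, and Lemma~\ref{h(vx,vy)}(2) with $a=1$ gives $h(Vx,Vy)\in\pi_0 O_{\breve F}$, so $Vx\in\pi_0(VL)^*$; hence $VL^*\subset\pi_0(VL)^*$. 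For the reverse inclusion, $V$ is bijective on the rational Dieudonn\'e module $\dN_r$, so any $v\in\pi_0(VL)^*$ can be written $v=Vx$ for some $x\in\dN_r$, and the condition $h(Vx,Vy)\in\pi_0 O_{\breve F}$ for every $y\in L$ forces $h(x,y)\in O_{\breve F}$ by the same lemma; thus $x\in L^*$ and $v\in VL^*$.

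There is no real obstacle here: both parts are short formal manipulations. The only subtlety worth mentioning is that $V$ is $\sigma^{-1}$-semilinear rather than $O_{\breve F_0}$-linear, which is exactly why the equivalence in Lemma~\ref{h(vx,vy)}(2) rather than a naive adjoint identity is the right tool for (2).
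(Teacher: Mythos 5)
Your proof is correct and uses essentially the same approach as the paper: both parts reduce to unfolding the definition of the dual lattice together with the valuation-shift equivalence in Lemma~\ref{h(vx,vy)}(2). The paper writes part (2) as a single chain of set equalities via $V^{-1}$ (obtaining $(VL)^*=\pi_0^{-1}VL^*$ directly), whereas you prove the two inclusions separately using $V$; the two presentations are routine rearrangements of the same argument.
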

\begin{proof}
    By definition,
    \begin{align*}
        (\pi^{-1}L)^*&=\{ x\in\dN_r \res h(x,\pi^{-1}y)\in O_{\breve{F}}\text{ for any }y\in L\}\\
        &=\{ x\in\dN_r \res h(\pi^{-1}x,y)\in O_{\breve{F}}\text{ for any }y\in L\}\\
        &=\{x\in\dN_r\res \pi^{-1}x\in L^*\}\\
        &=\pi L^*,
    \end{align*}
    \begin{align*}
        (VL)^*&=\{ x\in\dN_r\res h(x,Vy)\in O_{\breve{F}} \text{ for any }y\in L\}\\
        &=\{ x\in\dN_r\res h(V^{-1}x,y)\in \pi_0^{-1}O_{\breve{F}}\text{ for any }y\in L\}\quad\quad\text{ by Lemma \ref{h(vx,vy)}}\\
        &=\{x\in\dN_r\res h(\pi_0 V^{-1}x,y)\in O_{\breve{F}}\text{ for any }y\in L\}\\
        &=\pi_0^{-1} V L^*.
    \end{align*}
\end{proof}

We have the following detailed description of the Dieudonn\'e module and Hodge filtration of $\ubE$.

\begin{lemma}\label{D module of ubE}
    The Dieudonn\'e module $\overline{\dM}_0$ of $\ubE$ can be identified with $W_{O_{F_0}}(\bar{k})^2=O_{\breve{F}_0}^2$ endowed with the Frobenius and Verschiebung operator given in matrix form by
    \begin{equation}
        F=\begin{pmatrix}
        0& \pi_0\\ 1 & 0
    \end{pmatrix}\sigma\text{ , }
    V=\begin{pmatrix}
        0&\pi_0\\ 1 & 0
    \end{pmatrix}\sigma^{-1}.
    \end{equation}
    Moreover we can find an $O_{\breve{F}_0}$-basis $e,f$ of $\overline{\dM}_0$ such that
    \begin{itemize}
        \item [(1)] $f=-\pi e$ and $\overline{\dM}_0=O_{\breve{F}}e$;
        \item [(2)] $Ve=Fe=f, Vf=Ff=\pi_0 e$;
        \item [(3)] The Hodge filtration of $\ubE$ is given by
        \begin{equation}
        \xymatrix@R=.5em{0\ar[r]& \Fil^1\ar@{=}[d]\ar[r]&\dD(\ubE)\ar@{=}[d]\ar[r]&\Lie\ubE\ar@{=}[d]\ar[r]&0.\\
         & \langle f\rangle_{\bar{k}}&\langle e,f\rangle_{\bar{k}}&\langle e\rangle_{\bar{k}}&}
         \end{equation}
    \end{itemize}
\end{lemma}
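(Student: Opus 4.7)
The proof is a direct computation, made straightforward by the fact that $\ubE$ equals $\mathbb{E}$ as a formal $O_{F_0}$-module (only the $O_F$-action is twisted by the Galois involution). Therefore its covariant Dieudonn\'e module is the same $\mathbb{M}_0 = O_{\breve{F}_0}^2$ as for $\mathbb{E}$, with the same Frobenius and Verschiebung as in \eqref{Frob on E}. This justifies the first assertion of the lemma.

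Next I would make the twisted action explicit: from $\iota_{\ubE}(a+b\pi) = \iota_{\mathbb{E}}(a - b\pi)$ we get
\begin{equation*}
\iota_{\ubE}(a+b\pi) = \begin{pmatrix} a & -b\pi_0\\ -b & a\end{pmatrix},\quad a,b\in O_{F_0},
\qquad\text{so}\qquad
\iota_{\ubE}(\pi) = \begin{pmatrix} 0 & -\pi_0\\ -1 & 0\end{pmatrix}.
\end{equation*}
Now I take $e$ to be the first standard basis vector of $O_{\breve{F}_0}^2$ and set $f \coloneqq -\iota_{\ubE}(\pi)\, e$. By the matrix above, $f$ is exactly the second standard basis vector. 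Item (1) is then immediate: by construction $f = -\pi e$, and since $\{e,f\}$ is an $O_{\breve{F}_0}$-basis and $\pi\cdot e = -f$, the element $e$ generates $\overline{\dM}_0$ as an $O_{\breve{F}}$-module.

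For item (2) I would just apply the formulas for $F$ and $V$ to $e$ and $f$. Since $e,f$ have entries in $\ZZ_p$ they are fixed by $\sigma^{\pm 1}$, and the stated matrices give
\begin{equation*}
Fe = Ve = f,\qquad Ff = Vf = \pi_0 e,
\end{equation*}
as claimed. For item (3), by covariant Dieudonn\'e theory over $\bar{k}$ we have
\begin{equation*}
\dD(\ubE) = \overline{\dM}_0/\pi_0\overline{\dM}_0,\qquad \Lie\ubE = \overline{\dM}_0/V\overline{\dM}_0,\qquad \Fil^1 = V\overline{\dM}_0/\pi_0\overline{\dM}_0.
\end{equation*}
Using (2), $V\overline{\dM}_0$ is the $O_{\breve{F}_0}$-span of $f$ and $\pi_0 e$, so after reducing modulo $\pi_0$ we obtain $\Fil^1 = \langle f\rangle_{\bar{k}}$, $\Lie\ubE = \langle e\rangle_{\bar{k}}$, and $\dD(\ubE) = \langle e,f\rangle_{\bar{k}}$, matching the diagram in the statement.

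There is essentially no obstacle here: the only point to be careful about is the sign coming from the Galois twist in the $O_F$-action, which is precisely what forces the choice $f = -\pi e$ (rather than $f = \pi e$) so that $F$ and $V$ send $e$ to $f$ without an extra sign.
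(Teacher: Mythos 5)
Your proof is correct and follows essentially the same route as the paper's (which merely states the matrices, picks $e,f$ to be the standard basis vectors, notes that $\iota_{\ubE}(\pi) = \begin{psmallmatrix}0&-\pi_0\\-1&0\end{psmallmatrix}$ because of the Galois twist, and leaves the rest as "direct computation"); you simply carry out the computations the paper omits. One small imprecision: the basis entries $1,0$ lie in $O_{F_0}$, which is what is fixed by $\sigma$; saying they lie in $\ZZ_p$ is correct for these particular entries but slightly misstates the general reason $\sigma$ acts trivially on them.
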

\begin{proof}
    The description of the Dieudonn\'e module is the same as $\dE$, so we get the matrix form of $F$ and $V$ as in \eqref{Frob on E}. Take
    \begin{equation*}
        e=\begin{psmallmatrix}
        1\\0
    \end{psmallmatrix}\text{ and }f=\begin{psmallmatrix}
        0\\1
    \end{psmallmatrix}.
    \end{equation*}
    Note that the action of $\pi$ on $\overline{\dM}_0$ is given by $\begin{psmallmatrix}
        0&-\pi_0\\-1&0
    \end{psmallmatrix}$ as we twist the $O_F$-action by Galois conjugation. All the other results follow directly from computation.
\end{proof}


\begin{lemma}\label{existence of good basis}
    Let $z\in\cN_n(\bar{k})$ be any point and $M$ its Dieudonn\'e module. Then we can always find a standard normal basis $\sigma_1,\tau_1,\dots,\sigma_m,\tau_m$ of $M$ with $\sigma_1\in VM$.
\end{lemma}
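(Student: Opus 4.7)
The plan is to reduce the problem to finding a primitive isotropic vector of the auxiliary $\pi$-modular lattice $N\coloneqq \pi^{-1}VM$ that does not lie in $M$, and then to transport it back to $VM\subset M$. The key structural fact is that $N$ is $\pi$-modular of rank $n$: by Corollary \ref{V and dual} together with $M^{*}=\pi^{-1}M$,
\[
(VM)^{*}=\pi_{0}^{-1}V(M^{*})=\pi_{0}^{-1}V(\pi^{-1}M)=\pi^{-3}VM,
\]
and therefore $N^{*}=\pi(VM)^{*}=\pi^{-2}VM=\pi^{-1}N$. In particular $N\simeq \pi H^{m}$ as a hermitian $O_{\breve F}$-lattice, so $N$ admits a standard normal basis.

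Next I would establish $N\not\subset M$, equivalently $VM\not\subset \pi M$. Since $VM$ and $\pi M$ are both rank-$n$ $O_{\breve F}$-sublattices of $M$ with the same colength $n$, any containment forces equality; but the relation $VM\subset^{1}VM+\pi M$ from \eqref{VM} rules out $VM=\pi M$. A short length count then yields $[N:N\cap M]=1$, where $N\cap M=\pi^{-1}(VM\cap\pi M)$, so $N\cap M$ is a proper sublattice of $N$.

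Because $N\simeq \pi H^{m}$, the primitive isotropic vectors of $N$ generate $N$ as an $O_{\breve F}$-module: indeed any standard normal basis of $N$ consists of primitive isotropic vectors and already spans $N$. Hence not every primitive isotropic vector of $N$ can lie in the proper sublattice $N\cap M$, for otherwise $N\cap M$ would equal $N$, contradicting the previous step. Pick a primitive isotropic $\tilde\sigma\in N$ with $\tilde\sigma\notin M$, and extend it to a standard normal basis of $N$ by Witt extension for hermitian lattices (equivalently, by choosing a standard normal basis of $\pi^{-1}N\simeq H^{m}$ in which $\pi^{-1}\tilde\sigma$ is the first element).

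Finally, set $\sigma_{1}\coloneqq \pi\tilde\sigma\in \pi N=VM$. Then $\sigma_{1}\notin \pi M$ (because $\tilde\sigma\notin M$) and $h(\sigma_{1},\sigma_{1})=\pi\bar\pi\, h(\tilde\sigma,\tilde\sigma)=0$, so $\sigma_{1}$ is a primitive isotropic element of $M$ that lies in $VM$. A second application of the same Witt-extension argument to $\pi^{-1}M\simeq H^{m}$ starting from $\pi^{-1}\sigma_{1}$ (using Lemma \ref{ortho in H^s} and Corollary \ref{normal basis and ortho}) produces a standard normal basis $\sigma_{1},\tau_{1},\dots,\sigma_{m},\tau_{m}$ of $M$ with $\sigma_{1}\in VM$, as required. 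The main point requiring care is the identification of $N$ as $\pi$-modular, which depends crucially on the explicit formulas in Corollary \ref{V and dual}; everything else is a routine manipulation of hermitian $O_{\breve F}$-lattices.
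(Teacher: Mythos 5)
Your proposal is correct, and it takes a genuinely different route from the paper's proof. The paper's proof starts from an arbitrary $e=Ve'\in VM\setminus\pi M$, applies Lemma~\ref{ortho in H^s} to $e'$ to produce $e''\in M$ with $h(e'',e'')=-h(e',e')$ and $e''-e'\in\pi M$, and then replaces $e$ by the explicit modification $V(e'+e'')$, which is checked to be isotropic (via Lemma~\ref{h(vx,vy)}) and still outside $\pi M$. Your proof instead recognizes that $N=\pi^{-1}VM$ is itself a $\pi$-modular lattice (an observation the paper never records explicitly, and which you correctly derive from both parts of Corollary~\ref{V and dual} and $M^{*}=\pi^{-1}M$), notes that the primitive isotropic vectors of $N\simeq\pi H^{m}$ span $N$ while $N\cap M=\pi^{-1}(VM\cap\pi M)$ is a proper sublattice, and so extracts a primitive isotropic $\tilde\sigma\in N\setminus M$ whose rescaling $\sigma_1=\pi\tilde\sigma\in VM\setminus\pi M$ is isotropic and extends to a standard normal basis of $M$. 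Both arguments rest on the supersingularity-driven colength count ($VM\subset^n M$ together with $VM\subset^1 VM+\pi M$) and on the Witt-type extension in Corollary~\ref{normal basis and ortho}, but yours replaces the paper's hands-on modification trick with a clean structural existence argument; the tradeoff is that the paper's construction is completely explicit (it exhibits the isotropic vector directly as $V(e'+e'')$), whereas yours shows existence nonconstructively. One small remark: the intermediate step of extending $\tilde\sigma$ to a standard normal basis of $N$ is not actually used later, since you immediately pass to $\sigma_1=\pi\tilde\sigma$ and redo the extension inside $M$; it can be dropped.
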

\begin{proof}
    The Dieudonn\'e module $M$ is an $O_{\breve{F}}$-lattice in $\mathbb{N}_n$ such that $M^*=\pi^{-1}M$ and $\pi_0 M\subset VM\subset M$, $VM\subset^1 VM+\pi M$. Since the isocrystal $\mathbb{N}_n$ is supersingular, the lattices $M$ and $\pi^{-1}VM$ have the same co-length in any lattice that contains them both. The condition $VM\subset^1 VM+\pi M$ then implies that $VM\cap\pi M\subset^1 VM$.
	
	Pick any $e\in VM$ and $e\notin VM\cap \pi M$. Write $e=Ve'$ for some $e'\in M$. Write $n=2m$. As $M$ is $\pi$-modular, $M\simeq \pi H^m$. As we assume $e\notin \pi M$, we have $e'\notin \pi M$. Thus we can apply Lemma \ref{ortho in H^s}\footnote{To be precise, apply to the orthogonal complement of $\pi^{-1}e'$ in $\pi^{-1}$M first, then multiply the complement by $\pi$. The same remark applies to all following cases when applying Lemma \ref{ortho in H^s} or Corollary \ref{normal basis and ortho} to $\pi$-modular lattices.} to see that the orthogonal complement of $e'$ in $M$ is just
	$$M(e')=\pi H^{m-1}\obot I$$
	where $I=\langle e''\rangle_{O_{\breve{F}}}$ with $h(e'',e'')=-h(e',e')$ and $e''-e'\in \pi M$. Now consider the element $e+Ve''=Ve'+Ve''\in VM$. It is isotropic by Lemma \ref{h(vx,vy)} as
	$$h(e'+e'',e'+e'')=h(e',e')+h(e'',e'')=0.$$
	Moreover $Ve'+Ve''\notin \pi M$ as we already have $Ve''-Ve'\in V\pi M\subset \pi M$ and $e=Ve'\notin \pi M$. Thus we may just take $e$ at the beginning to be isotropic.

 As $e\notin \pi M$, by Lemma \ref{ortho in H^s} again we see that the orthogonal complement of $e$ in $M$ contains $\pi H^{m-1}$. Choose a standard normal basis $\sigma_2,\tau_2,\dots,\sigma_{m},\tau_{m}$ for this $\pi H^{m-1}$ and extend it to a standard normal basis $\sigma_1,\tau_1,\sigma_2,\tau_2,\dots,\sigma_m,\tau_m$ of $M$. Then $e=a \sigma_1+ b \tau_1$. As $e\notin \pi M$ we may assume $a\in O_{\breve{F}}^{\times}$ and by normalizing $\sigma_1, \tau_1$, we assume $a=1$. We may replace $\sigma_1$ by $e$ and still get a standard normal basis of $M$, with $\sigma_1=e\in VM$.
\end{proof}

Using the basis provided in Lemma \ref{existence of good basis}, we have the following detailed description of the Dieudonn\'e module and Hodge filtration of $\bar{k}$-points on $\cN_n$. This Lemma will be used repeatedly when showing surjectivity of morphisms between RZ spaces on $\bar{k}$-points.

\begin{lemma}\label{describe VM}
    Let $z=(X_z,\iota_z,\lambda_z,\rho_z)\in\cN_n(\bar{k})$ be any point. Let $M$ be the Dieudonn\'e module of $X_z$ and $\Fil^1_z\subset\dD_z$ be the Hodge Filtration of $X_z$. Suppose we have a standard normal basis $\sigma_1,\tau_1,\dots,\sigma_m,\tau_m$ of $M$ (or $M^*$) such that $\sigma_1\in VM$ (resp.\ $\sigma_1\in VM^*$). Then
    \begin{itemize}
        \item [(1)] $VM$ (resp.\ $VM^*$) is the $O_{\breve{F}}$-span of $$
	\sigma_1,\pi \sigma_2,\dots, \pi \sigma_m, \pi_0 \tau_1, \pi \tau_2,\pi \tau_3,\dots, \pi \tau_m;$$
    \item [(2)] $\Fil^1_z$ is the $\bar{k}$-span of
    $$\sigma_1,\pi \sigma_1,\pi \sigma_2,\dots,\pi \sigma_m,\pi \tau_2,\pi \tau_3,\dots, \pi \tau_m;$$
    \item [(3)] $\Lie X_z$ (resp.\ $\Lie X^{\vee}_z$) is the $\bar{k}$-span of
    $$\sigma_2,\sigma_3,\dots,\sigma_m,\tau_1,\pi \tau_1,\tau_2,\tau_3,\dots,\tau_m.$$
    \end{itemize}
\end{lemma}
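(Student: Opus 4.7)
\emph{Strategy.} The lemma reduces to part (1): once we know $VM$ (resp.\ $VM^*$) equals the claimed $\OFb$-lattice $L$, parts (2) and (3) follow by reading off the image and cokernel of $L$ in $M/\pi_0 M$ (resp.\ $M^*/\pi_0 M^*$) in the given $\bar k$-bases. I treat the $M$-case below; the $M^*$-case follows by the identical argument, with $h(\sigma_i,\tau_i)=-\pi$ replaced by $-\pi^{-1}$ and the pairing constraint on $VM^*$ taking values in $\pi\OFb$ rather than $\pi^3\OFb$ (since $VM^*$ is $\pi$-modular by Corollary~\ref{V and dual}); the two changes cancel in the decisive computation.

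\emph{The candidate lattice and the pairing constraint.} Let $L\subset M$ be the $\OFb$-span of the generators listed in (1). A direct check gives $\pi_0 M\subset L\subset M$, that $L$ has $\OFb$-rank $n$, and that $L\subset^1 L+\pi M$ (the missing direction is $\pi\tau_1$, since $\pi\cdot\pi\tau_1=\pi_0\tau_1\in L$). The elementary divisor decomposition $M/L\simeq(\OFb/\pi)^{2(m-1)}\oplus \OFb/\pi_0$ has $\bar k$-dimension $n$, matching $\dim_{\bar k}M/VM=\dim_{\bar k}\Lie X_z=n$. Since $M$ is $\pi$-modular we have $h(M,M)\subset\pi\OFb$, so Lemma~\ref{h(vx,vy)} gives $h(VM,VM)\subset\pi_0\cdot\pi\OFb=\pi^3\OFb$; equivalently $(VM)^*=\pi^{-3}VM$ by Corollary~\ref{V and dual}.

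\emph{Key computation and conclusion.} Take any $\beta\in\pi M\cap VM$ and write $\beta=\pi\beta'$ with $\beta'=\sum_i(a_i\sigma_i+d_i\tau_i)\in M$. Since $\sigma_1\in VM$ and $\beta\in VM$, the pairing constraint forces $h(\sigma_1,\beta)\in\pi^3\OFb$; in the standard normal basis only $h(\sigma_1,\tau_1)=-\pi$ contributes, and this expands to $h(\sigma_1,\beta)=\bar\pi\cdot\bar{d_1}\cdot(-\pi)=\pi^2\bar{d_1}$. So $d_1\in\pi\OFb$, whence the $\tau_1$-coefficient $\pi d_1$ of $\beta$ lies in $\pi_0\OFb$, i.e.\ $\beta\in\pi M\cap L$. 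Since $\pi M\cap VM$ and $\pi M\cap L$ both have $\OFb$-co-length $1$ in $\pi M$, they coincide. Combining this with $\sigma_1\in VM$ yields $L=\OFb\sigma_1+(\pi M\cap L)\subset VM$, and the equality of $\bar k$-codimensions in $M$ from the previous paragraph then forces $L=VM$, proving (1). The main subtlety is that the wedge and spin conditions alone leave the hyperplane $\pi M\cap VM\subset\pi M$ undetermined; the hypothesis $\sigma_1\in VM$ together with the $\pi^3$-modularity of $VM$ pins down its direction via the hermitian pairing, without any direct appeal to the Frobenius.
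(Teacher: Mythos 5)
Your proof is correct and does produce the statement, but it reaches part (1) by a different mechanism than the paper. The paper introduces the symmetrization $\sigma_z$ of $\lambda_z$ and the induced perfect \emph{symmetric} pairing $(\text{ },\text{ })$ on $\dD_z=M/\pi_0 M$, records the general fact that $\Fil^1_z=VM/\pi_0 M$ is maximal totally isotropic for it, and then extracts $\pi\tau_1\notin VM$ from the single relation $(\sigma_1,\pi\tau_1)=-1$; combined with $\pi_0 M\subset VM$, the condition $VM\subset^1 VM+\pi M$, and the co-length count, this determines $VM$. You instead stay with the \emph{hermitian} form, deriving $h(VM,VM)\subset\pi^3\OFb$ from $h(M,M)\subset\pi\OFb$ via Lemma~\ref{h(vx,vy)}, and pinning down the $\tau_1$-coordinate of $\pi M\cap VM$ from $h(\sigma_1,\cdot)$. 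The two arguments encode the same polarization-isotropy input, but yours avoids introducing $\sigma_z$ and the explicit $4m\times 4m$ symmetric matrix, at the cost of tracking $\pi$-valuations by hand. Two remarks. First, the paper dispatches the $M^*$ case in one line: if $\sigma_i,\tau_i$ is a standard normal basis of $M^*$ with $\sigma_1\in VM^*$, then $\pi\sigma_i,\pi\tau_i$ is a standard normal basis of $M$ with $\pi\sigma_1\in VM$, and one just multiplies the $M$-statement through by $\pi^{-1}$; rerunning the valuation computation (with the small sign slip $h(\sigma_i,\tau_i)=+\pi^{-1}$, not $-\pi^{-1}$, which is immaterial) is correct but unnecessary. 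Second, when you assert that $\pi M\cap VM$ has $\OFb$-co-length $1$ in $\pi M$, you should say explicitly that this is exactly where the hypothesis $VM\subset^1 VM+\pi M$ from condition \eqref{VM} enters, together with $\dim_{\bar k}M/VM=n$ from supersingularity; as written the reader must supply that step.
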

\begin{proof}
Write $n=2m$. The Dieudonn\'e module $M$ is an $O_{\breve{F}}$-lattice in $\mathbb{N}_n$ such that $M^*=\pi^{-1}M$ and $\pi_0 M\subset VM\subset M$, $VM\subset^1 VM+\pi M$. We first assume $\sigma_1,\tau_1,\dots,\sigma_m,\tau_m$ is a standard normal basis of $M$.

	As $\lambda_z$ is $\pi$-modular, there exists a unique morphism $\sigma_z\colon X_z\to X_z^{\vee}$ such that $\lambda_z=\sigma_z \iota_z(\pi)$. The morphism $\sigma_z$ is an isomorphism and symmetric in the sense that $\sigma_z^{\vee}=\sigma_z$. This symmetrization gives a perfect symmetric pairing on the Dieudonn\'e module $M$ (viewed as a rank $2n=4m$ $O_{\breve{F}_0}$-lattice)
	$$(-,-)\colon M\times M\lrarr O_{\breve{F}_0}$$
	and under the $O_{\breve{F}_0}$-basis
	$$\sigma_1,\dots,\sigma_m,\tau_1,\dots,\tau_m,\pi \sigma_1,\dots,\pi \sigma_m,\pi \tau_1,\dots,\pi \tau_m$$
	the matrix of the symmetric pairing is of the form
	\begin{equation}\label{symmetric matrix}
	    \begin{pmatrix}
	0&0&0&-I_m\\
	0&0&I_m&0\\
	0&I_m&0&0\\
	-I_m&0&0&0
	\end{pmatrix}.
	\end{equation}
	
	Now the condition $\pi_0 M\subset VM$ implies
	$$\pi_0 \sigma_1,\dots,\pi_0 \sigma_m, \pi_0 \tau_1,\dots,\pi_0 \tau_m\in VM$$
	and by our assumption $\sigma_1\in VM$.
	
	Moreover the point $z$ also gives a Hodge filtration of its Dieudonn\'e crystal as $\bar{k}$-vector spaces
	$$0\lrarr \operatorname{Fil}^1_z\lrarr \mathbb{D}_z\lrarr \operatorname{Lie}X_z\lrarr 0$$
	where $\mathbb{D}_z=M/\pi_0 M$ and $\operatorname{Fil}^1_z=VM/\pi_0 M$. The induced perfect pairing
	$$(-,-)\colon\mathbb{D}_z\times\mathbb{D}_z\lrarr\bar{k}$$
	makes $\operatorname{Fil}^1_z$ a maximal totally isotropic subspace. From this we see that, as we already have $\sigma_1\in VM$ and $(\sigma_1,\pi \tau_1)=-1$, we have $\pi \tau_1\notin VM$. Now the condition
	$$VM\subset^1 VM+\pi M$$
	would imply that $\pi \sigma_1,\pi \sigma_2,\dots, \pi \sigma_m, \pi \tau_2,\pi \tau_3,\dots, \pi \tau_m\in VM$. In particular we have
	\begin{equation}\label{V_n}
	V_n\coloneqq \langle \sigma_1,\pi \sigma_2,\dots, \pi \sigma_m, \pi \tau_2,\pi \tau_3,\dots, \pi \tau_m, \pi_0 \tau_1\rangle_{O_{\breve{F}}}\subset VM.
    \end{equation}
	Since the isocrystal $\dN_n$ is supersingular, the co-length of $VM$ in $M$ is $n$. As the co-length of $V_n$ in $M$ is already $n$, the inclusion \eqref{V_n} is an equality. Consequently
	$$\operatorname{Fil}^1_z=\langle \sigma_1,\pi \sigma_1,\pi \sigma_2,\dots,\pi \sigma_m, \pi \tau_2,\pi \tau_3,\dots, \pi \tau_m\rangle_{\bar{k}}$$
	and
	$$\Lie X_z=M/VM=\langle \sigma_2,\sigma_3,\dots,\sigma_m,\tau_1,\pi\tau_1,\tau_2,\tau_3,\dots,\tau_m\rangle_{\bar{k}}.$$

    Now suppose $\sigma_1,\tau_1,\dots,\sigma_m,\tau_m$ is a standard normal basis of $M^*=\pi^{-1}M$. Then $\sigma'_i\coloneqq \pi\sigma_i,\tau'_j\coloneqq\pi\tau_j$ is a standard normal basis of $M$. Thus we have
    \begin{equation*}
    \begin{split}
        VM^*=V(\pi^{-1}M)=\pi^{-1}VM&=\pi^{-1}\langle \sigma'_1,\pi\sigma'_2,\dots,\pi\sigma'_m,\pi_0\tau'_1,\pi\tau'_2,\pi\tau'_3,\dots,\pi\tau'_m\rangle_{O_{\breve{F}}}\\
        &=\langle \sigma_1,\pi\sigma_2,\dots,\pi\sigma_m,\pi_0\tau_1,\pi\tau_2,\pi\tau_3,\dots,\pi\tau_m\rangle_{O_{\breve{F}}}
    \end{split}
    \end{equation*}
    and
    $$\Lie X_z^{\vee}=M^*/VM^*=\langle \sigma_2,\sigma_3,\dots,\sigma_m,\tau_1,\pi\tau_1,\tau_2,\tau_3,\dots,\tau_m\rangle_{\bar{k}}.$$
\end{proof}

Next we prove a lemma, which shows that $\cZ_n(u)$ is formally smooth of relative dimension $n-2$ over $\Spf O_{\breve{F}}$.

\begin{lemma}\label{tangent space of special divisor}
	Let $z\in\mathcal{N}_n(\bar{k})$ be any point. If there is nonzero $x\in\mathbb{V}_n$ such that $z\in \mathcal{Z}_n(x)(\bar{k})$ and $z\notin \mathcal{Z}_n(\pi^{-1}x)(\bar{k})$ then the tangent space of $\mathcal{Z}_n(x)$ at $z$ is of dimension $n-2$.
\end{lemma}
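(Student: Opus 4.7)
The plan is to translate the tangent-space calculation into a linear-algebra problem on the Dieudonn\'e module $M$ of $X_z$ via Grothendieck--Messing, and then exhibit an explicit tangent vector violating the cycle condition.

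First, since $\cN_n$ is formally smooth of relative dimension $n-1$, the relative tangent space $T_z\cN_n$ is $(n-1)$-dimensional. Concretely, a tangent vector corresponds via Grothendieck--Messing to a lift $\tilde\Fil^1\subset\dD_z\otimes_{\bar k}\bar k[\epsilon]$ of $\Fil^1_z$ as a direct summand, stable under $\iota$, Lagrangian for the induced pairing, and satisfying \eqref{Kott cond}, \eqref{wedge cond} and \eqref{even spin cond}; such lifts are parametrized by the $\bar k$-linear maps $\psi\colon\Fil^1_z\to\Lie X_z$ compatible with these structures. Using the generator $e$ of $\overline{\dM}_0$ from Lemma \ref{D module of ubE}, the canonical Hodge filtration on $\ucE$ is generated by $-\pi e$, so the homomorphism $x\colon\ubE\to X_z$ (which exists because $z\in\cZ_n(x)$) lifts to the deformation corresponding to $\psi$ iff $\pi xe\in\tilde\Fil^1$, which unwinds to the single equation $\psi(\pi xe)=0$ in $\Lie X_z$. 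The hypothesis $z\notin\cZ_n(\pi^{-1}x)$ gives $xe\notin\pi M$, hence $\pi xe\notin\pi_0 M$, so $\pi xe$ is a nonzero element of $\Fil^1_z$.

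It then suffices to exhibit some $\psi\in T_z\cN_n$ with $\psi(\pi xe)\neq 0$: the kernel of this linear functional is of codimension one in $T_z\cN_n$, equals the tangent space of $\cZ_n(x)$ at $z$, and has dimension $n-2$. To construct such a $\psi$, I would apply Lemma \ref{existence of good basis} to obtain a standard normal basis $\sigma_1,\tau_1,\dots,\sigma_m,\tau_m$ of $M$ with $\sigma_1\in VM$, and use Lemma \ref{describe VM} for explicit bases of $\Fil^1_z$ and $\Lie X_z$. Writing $xe=\sum a_i\sigma_i+\sum b_j\tau_j$, the constraint $\pi xe\in VM$ forces $b_1\in\pi O_{\breve F}$, and since $xe\notin\pi M$ at least one of $a_1,\dots,a_m,b_2,\dots,b_m$ is a unit. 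After an adjustment of the basis analogous to Corollary \ref{normal basis and ortho} (which preserves the $\pi$-modular structure of $M$), one may assume that modulo $\pi_0 M$ the element $\pi xe$ is, up to a unit, a single basis vector of $\Fil^1_z$ lying outside $\pi\Fil^1_z$. One then writes down an explicit $\psi$ moving that basis vector to a non-vanishing direction in $\Lie X_z$ and zero on the other generators, and checks using the pairing matrix \eqref{symmetric matrix} that $\psi$ satisfies the $O_F$-linearity and Lagrangian conditions, hence is a genuine tangent vector.

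The main obstacle is this last verification: the space of deformations is cut out from $\Hom_{\bar k}(\Fil^1_z,\Lie X_z)$ by several a priori conditions (chiefly $O_F$-linearity and isotropy), and one must produce $\psi$ threading through all of them while guaranteeing $\psi(\pi xe)\neq 0$. Formal smoothness of $\cN_n$ guarantees that enough room exists, but extracting $\psi$ in explicit form requires the basis adjustments described above.
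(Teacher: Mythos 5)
Your proposal follows the same Grothendieck--Messing framework as the paper and correctly reformulates the cycle condition as $\psi(\pi x e)=0$; your use of $z\notin\cZ_n(\pi^{-1}x)$ to conclude $\pi x e\notin\pi_0 M$ also matches the paper's final step. However, there is a genuine gap in the dimension count: $\psi\mapsto\psi(\pi x e)$ is a priori a linear map $T_z\cN_n\to\Lie X_z$ whose target is $n$-dimensional, so exhibiting one $\psi$ with $\psi(\pi x e)\neq 0$ does not by itself yield codimension exactly one; you also need a codimension $\leq 1$ bound. The paper obtains this by carrying out the explicit local-model computation from \cite{PR09}*{\S 5.c}: after imposing the isotropy, $\pi$-stability and Wedge conditions, tangent directions are parametrized by $(x,b_1,\dots,b_{n-2})$, and --- crucially --- the image of $\iota(\pi)\Fil^1_z$ (where $\pi x e$ lives) under any such $\psi$ lands in the single line $\langle\pi f_n\rangle_{\bar{k}}\subset\Lie X_z$, so the cycle condition is one scalar equation. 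Alternatively one could invoke that $\cZ_n(x)$ is a relative Cartier divisor (\cite{LL22}*{Lemma~2.40}, cited in the proof of Corollary~\ref{N_n(u) is smooth}); you do neither, and you yourself flag this verification as the ``main obstacle'' without resolving it.

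The sketch of the explicit $\psi$ has two further problems. The normalization asserting that $\pi x e$ is ``up to a unit, a single basis vector of $\Fil^1_z$'' does not follow from Corollary~\ref{normal basis and ortho}: that construction yields $\sigma_1-\tfrac{\pi\beta}{2}\tau_1$ rather than a single basis vector unless $\beta=0$, and in any case the basis change need not preserve the condition $\sigma_1\in VM$ on which your application of Lemma~\ref{describe VM} depends. Moreover the prescription ``$\psi$ zero on the other generators'' cannot be realized inside $T_z\cN_n$: the constraints couple the values of $\psi$ on the generators of $\Fil^1_z$ (for example $\psi(f_1)$ is forced by the values on $\iota(\pi)\Fil^1_z$ through the isotropy and $\pi$-stability conditions). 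The paper sidesteps both issues by working with the general coordinate expression $a_1\pi f_1+\dots+a_{n-1}\pi f_{n-1}$ for $\pi x e$ and reducing the cycle condition to the single scalar equation $a_1 x+a_2 b_{n-2}+\dots-a_{n-1}b_1=0$, whose nontriviality is then exactly the statement that not all $a_i$ vanish.
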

\begin{proof}
    We shall first recall the computations in \cite{PR09}*{\S5.c}. To be precise, let $V$ be an $F$-vector space of dimension $n$ and let $h\colon V\times V\to F$ be a split $F/F_0$-hermitian form. Choose a basis $e_1,\dots,e_n$ of $V$ such that
	$$h(e_i,e_{n+1-j})=\delta_{i,j}$$
	and for $i=0,\dots,n-1$ set
	$$\Lambda_i\coloneqq \langle \pi^{-1}e_1,\dots,\pi^{-1}e_i,e_{i+1},\dots,e_n\rangle_{O_F}.$$
	Attached to the hermitian form $h$ we also have two $F_0$-bilinear form
	$$(x,y)\coloneqq \frac{1}{2}\operatorname{Tr}_{F/F_0}(h(x,y))\quad\text{ and }\quad\langle x,y\rangle\coloneqq \frac{1}{2}\operatorname{Tr}_{F/F_0}(\pi^{-1}h(x,y))$$
	so the form $(\text{ },\text{ })$ is symmetric and $\langle\text{ },\text{ }\rangle$ is alternating.
	
	Now $\Lambda_m$ is equipped with the perfect symmetric pairing $(\text{ },\text{ })$. Let $f_1=-\pi^{-1}e_1, \dots, f_m=-\pi^{-1}e_m$ and $f_{m+1}=e_{m+1}, \dots, f_n=e_n$. Consider the $O_{F_0}$-basis of $\Lambda_m$ given by
	$$f_1, \dots, f_n, -\pi f_1, \dots, -\pi f_m, \pi f_{m+1}, \dots, \pi f_n.$$
	Reorder the basis, set
	\begin{align*}
	    \Lambda'&\coloneqq\langle f_1, \pi f_1, \pi f_2, \pi f_3, \dots, \pi f_{n-1}\rangle_{O_{F_0}},\\
	\Lambda''&\coloneqq \langle f_n, \pi f_n, f_2, f_3, \dots, f_{n-1}\rangle_{O_{F_0}}.
	\end{align*}
    Then $\Lambda_m=\Lambda'\oplus\Lambda''$.
 
    Let $M$ be the Dieudonn\'e module of $z=(X_z,\iota_z,\lambda_z,\rho_z)$. By Lemma \ref{existence of good basis} and Lemma \ref{describe VM} we see that there is a standard normal basis $\sigma_1,\tau_1,\dots,\sigma_m,\tau_m$ of $M$ with $\sigma_1\in VM$ and symmetric pairing matrix \eqref{symmetric matrix}. Then we have an isomorphism of $O_{\breve{F}}$-lattices preserving perfect symmetric pairing
    \begin{equation}
        \Lambda_m\otimes_{O_{F_0}}O_{\breve{F}_0}\simeq M
    \end{equation}
    sending $f_i\otimes 1$ to $\sigma_i$ for $1\leq i\leq m$ and $f_{n+1-j}\otimes 1$ to $-\tau_j$ for $1\leq j\leq m$. Under this isomorphism, the Hodge filtration
    \begin{equation*}
        \Fil^1_z=\langle \sigma_1,\pi\sigma_1,\pi\sigma_2,\dots,\pi\sigma_m,\pi\tau_2,\pi\tau_3,\dots,\pi\tau_m\rangle_{\bar{k}}\subset\dD_z
    \end{equation*}
    is identified with
	$$\cF_1\coloneqq \Lambda'\otimes_{O_{F_0}}\bar{k}=\langle f_1, \pi f_1, \pi f_2, \pi f_3, \dots, \pi f_{n-1}\rangle_{\bar{k}}\subset \Lambda_m\otimes_{O_{F_0}}\bar{k}.$$
	
	Now we want to look at the tangent space of $\mathcal{N}_n$ at $z$, which can be identified with $\mathcal{N}_{n,z}(\bar{k}[\varepsilon])$ where $\bar{k}[\varepsilon]=\bar{k}[x]/(x^2)$.
	
	Any lift $z'$ of $z$ to $\bar{k}[\varepsilon]$ gives rise to a Hodge filtration $\cF=\operatorname{Fil}^1_{z'}\subset\Lambda_m\otimes_{O_{F_0}}\bar{k}[\varepsilon]$ lifting $\cF_1$. We may find a linear map $X\colon \Lambda'_{\bar{k}[\varepsilon]}\to\Lambda''_{\bar{k}[\varepsilon]}$ such that $\cF=\{v+X\cdot v \res v\in \Lambda'_{\bar{k}[\varepsilon]}\}$. As we want $\cF$ to be a lift of $\cF_1$, $X\in\varepsilon M_n(\bar{k})$.
	
	The matrix of the symmetric form on $\Lambda_m$ under the basis
	$$f_1, \pi f_1, \pi f_2, \pi f_3, \dots, \pi f_{n-1}, f_n, \pi f_n, f_2, f_3, \dots, f_{n-1}$$
	is of the form
	$$\begin{pmatrix}
	0 & S\\
	S^{t} & 0
	\end{pmatrix}$$
	where $S$ is the skew matrix of size $n$,
	$$S=\begin{pmatrix}
	J_2^t & 0\\
	0 & J_{n-2}
	\end{pmatrix},
	J_{2s}=
	\begin{pmatrix}
	0 & -H_s\\
	H_s & 0
	\end{pmatrix}$$
	where $H_s$ is the unit antidiagonal matrix of size $s$.
	
	We may write the map $X$ as a matrix
	$$X=\begin{pmatrix}
	T & B \\
	C & Y
	\end{pmatrix}$$
	where each block has size $2\times 2$ for $T$, $2\times (n-2)$ for $B$, $(n-2)\times 2$ for $C$ and $(n-2)\times (n-2)$ for $Y$. Then the condition that $\cF$ is isotropic translates into
	$$SX^t=XS$$
	which is just $J_2\cdot T^t=T\cdot J_2$, $J_{n-2}\cdot Y^t=Y\cdot J_{n-2}$ and $C\cdot J_2^t=J_{n-2}\cdot B^t$. The first condition implies that $T$ is diagonal, $T=\operatorname{diag}(x,x)$, the last condition shows that $C$ is determined by $B$ via $C=J_{n-2}B^t J_2$.
	
	The condition that $\cF$ is $\pi$-stable translates to\footnote{The computation from \cite{PR09}*{\S5.c} says $B_1=B_2\cdot Y$, which is incorrect. The correct computation shows that it should be $B_2=-B_1\cdot Y$, as can be shown from the description below.}
	$$Y^2=\pi_0\cdot I_{n-2}\text{ , }B_2=-B_1\cdot Y.$$

	As we have signature $(1,n-1)$ in our setting, and $\pi=0$ in $\bar{k}[\varepsilon]$, the Wedge condition implies
	$$Y=\sqrt{\pi_0}\cdot I_{n-2}=0.$$
	So the matrix $X$ looks like
	$$X=\begin{pmatrix}
	x & 0 & b_1 & b_2 & \dots & b_{n-2} \\
	0 & x & 0 & 0 & \dots & 0\\
	0 & b_{n-2} & 0 & 0 &\dots &0\\
	&\dots&\dots&&\dots\\
	0 & b_{(n-2)/2} &0&0&\dots&0\\
	0& -b_{(n-2)/2-1}&0&0&\dots &0\\
	&\dots&\dots&&\dots\\
	0& -b_1 &0&0&\dots&0
	\end{pmatrix}.$$
	Thus $\cF$ is given by $\bar{k}[\varepsilon]$-span of
	$$\begin{aligned}
	&t_1=f_1+xf_n+b_1 f_2+\dots + b_{n-2}f_{n-1},\\
	&t_2=\pi f_1+x\pi f_n,\\
	&t_3=\pi f_2+b_{n-2}\pi f_n,\\
	&\quad\quad\quad\dots\\
	&t_n=\pi f_{n-1}-b_1\pi f_n
	\end{aligned}$$
	with all $x, b_1, \dots, b_{n-2}\in\varepsilon\cdot \bar{k}$. One checks easily that
	$$\pi t_1=t_2+b_1 t_3+ b_2 t_4+\dots +b_{n-2}t_n$$
	and $\pi t_i=0$ for $i=2,\dots, n$ (so indeed $\cF$ is $\pi$-stable). In particular this implies that the tangent space of $\mathcal{N}_n$ at $z$ is of $\bar{k}$-dimension $n-1$.
	
	Now suppose $x\in \mathbb{V}_n$ such that $z\in\mathcal{Z}_n(x)(\bar{k})$ but $z\notin\mathcal{Z}_n(\pi^{-1}x)(\bar{k})$. This means we have a morphism of $p$-divisible groups
	$$x\colon\ubE\lrarr X_z$$
	where $X_z$ is the $p$-divisible group corresponding to the $\bar{k}$-point $z$. Write the Hodge filtration of $\ubE$ as
	$$0\lrarr \operatorname{Fil}^1\lrarr \mathbb{D}(\ubE)\lrarr \operatorname{Lie}\ubE\lrarr0$$
	where $\operatorname{Fil}^1$ is a $1$-dimension $\bar{k}$-vector space. Choose a $\bar{k}$-basis $f$ of $\Fil^1$ as in Lemma \ref{D module of ubE} and consider its image in $\mathbb{D}(X_z)\simeq\Lambda_{m, \bar{k}}$ under the map $\mathbb{D}(x)$. As we assume $z\in\mathcal{Z}_n(x)(\bar{k})$, the image lies in $\cF_1$. The map $\mathbb{D}(x)$ also commutes with $\pi$-action and $\pi\cdot f=-\pi_0 e=0$. So we may write its image as
	$$a_1 \pi f_1 + a_2 \pi f_2 +\dots +a_{n-1}\pi f_{n-1}\in\cF_1.$$
	Similarly we can write the image of $f$ in $\mathbb{D}(X_z)_{\bar{k}[\varepsilon]}$ under the map $\mathbb{D}(x)_{\bar{k}[\varepsilon]}$ as
	$$f'\coloneqq a_1 \pi f_1+\dots+a_{n-1}\pi f_{n-1}+\varepsilon(\mu_1\pi f_1+\dots +\mu_n \pi f_n).$$
	
	Then a lift $z'$ of $z$ still lies in $\mathcal{Z}_n(x)(\bar{k}[\varepsilon])$ is equivalent to $f'\in\cF$. We can first use $\varepsilon\cdot t_i=\varepsilon \pi f_{i-1}$ for $i=2,\dots, n$ to get rid of $\varepsilon \mu_i \pi f_i$ for $i=1,\dots, n-1$, then the only obstruction for $f'\in\cF$ is to require
	$$a_1 x+a_2 b_{n-2}+\dots +a_{\frac{n+2}{2}}b_{\frac{n-2}{2}}-a_{\frac{n+2}{2}+1} b_{\frac{n-2}{2}-1}-\dots-a_{n-1}b_1=\mu_n.$$
	In particular if not all the $a_i$ are zero, then the tangent space of $\mathcal{Z}_n(x)$ at $z$ is of dimension $n-1-1=n-2$.
	
	Suppose all the $a_i$ are zero, which means the image of $f$ in $\cF_1$ is already zero. Follow the notation in Lemma \ref{D module of ubE} we let $\overline{\dM}_0$ be the Dieudonn\'e module of $\ubE$. The morphism $x\colon\ubE\to X_z$ gives rise to a morphism of Dieudonn\'e modules $x\colon \overline{\dM}_0=O_{\breve{F}}e\to M$. The vanishing of all the $a_i$ is equivalent to say that
	$$x V\overline{\dM}_0\subset VM\cap \pi_0 M=\pi_0 M.$$
	As $\overline{\dM}_0=\langle e,f\rangle_{O_{\breve{F}_0}}$ and explicitly $Ve=f$ and $Vf=\pi_0 e$, we have
	$$xf\in\pi_0 M\text{ and }x\pi_0 e\in \pi_0 M.$$
	But also we have $\pi e=-f$ and so from $xf\in \pi_0 M$ we get
	$$\pi^{-1}xf\in\pi M\subset M\text{ and }x\pi e\in \pi_0 M\text{ , }\pi^{-1}xe\in M.$$
	In particular we show that $\pi^{-1}x$ is also a morphism of Dieudonn\'e modules $\overline{\dM}_0\to M$ hence $z$ is in $\mathcal{Z}_n(\pi^{-1}x)(\bar{k})$, contradiction! Thus we can not have all the $a_i$ to be zero. The Lemma is proved.
\end{proof}

\begin{corollary}\label{N_n(u) is smooth}
	If $x\in\mathbb{V}_n$ has hermitian norm $h(x,x)\in \pi^2 O_{F_0}^{\times}$ then $\cZ_n(x)$ is formally smooth of relative dimension $n-2$ over $\Spf O_{\breve{F}}$.
\end{corollary}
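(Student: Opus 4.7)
The plan is to deduce the corollary from Lemma~\ref{tangent space of special divisor} together with the fact that $\cZ_n(x)$ is a relative effective Cartier divisor on $\cN_n$ (by \cite{LL22}*{Lemma 2.40}). Lemma~\ref{tangent space of special divisor} tells us that the relative tangent space of $\cZ_n(x)$ at a $\bar k$-point $z$ has dimension $n-2$ \emph{as soon as} $z\notin\cZ_n(\pi^{-1}x)(\bar k)$. The main step, therefore, is to show that the hypothesis $h(x,x)\in\pi^2 O_{F_0}^\times$ automatically rules out $z\in\cZ_n(\pi^{-1}x)(\bar k)$ for every $z\in\cZ_n(x)(\bar k)$. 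This is a pure hermitian-form calculation on the Dieudonn\'e module, and is where the assumption on the $\pi$-valuation of $h(x,x)$ enters essentially.

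Concretely, let $z\in\cZ_n(x)(\bar k)$ with Dieudonn\'e module $M\subset\dN_n$, which is $\pi$-modular, and write $v\coloneqq x(e)\in\dN_n$ where $e$ is the basis of $\overline{\dM}_0$ fixed in Lemma~\ref{D module of ubE}. Then $z\in\cZ_n(x)(\bar k)$ is equivalent to $v\in M$, while $z\in\cZ_n(\pi^{-1}x)(\bar k)$ is equivalent to $v\in\pi M$. Since $M\simeq\pi H^{n/2}$ as hermitian $O_{\breve F}$-lattice, one reads off $h_{\dN_n}(M,M)\subset\pi O_{\breve F}$, so $v\in\pi M$ would force
\[
h_{\dN_n}(v,v)\in\pi^2\cdot\pi O_{\breve F}=\pi^3 O_{\breve F},
\]
which has $\pi$-valuation at least $3$. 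On the other hand, the relation \eqref{relation of herm forms} gives $h_{\dN_n}(v,v)=\delta\cdot h_{\dV_n}(x,x)$ with $\delta\in O_{\breve F_0}^\times$, so $h_{\dN_n}(v,v)$ has the same $\pi$-valuation as $h_{\dV_n}(x,x)$, which equals $2$ by hypothesis. This contradiction yields $v\notin\pi M$, hence $z\notin\cZ_n(\pi^{-1}x)(\bar k)$.

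With every $\bar k$-point of $\cZ_n(x)$ satisfying the hypothesis of Lemma~\ref{tangent space of special divisor}, the relative tangent space of $\cZ_n(x)$ at every such point has the minimal possible dimension $n-2$. Because $\cN_n$ is formally smooth of relative dimension $n-1$ over $\Spf O_{\breve F}$ and $\cZ_n(x)$ is cut out in $\cN_n$ by a single non-zero-divisor (the relative Cartier divisor property), this tangent dimension count forces the defining equation to be everywhere nonzero of order one modulo the square of the maximal ideal, so $\cZ_n(x)$ is formally smooth of relative dimension $n-2$ over $\Spf O_{\breve F}$. The main technical obstacle in this plan is the norm-valuation argument of the second paragraph; once that is set up, everything else is a direct consequence of already-established results.
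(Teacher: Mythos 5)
Your proposal is correct and follows essentially the same route as the paper: verify that every $\bar k$-point of $\cZ_n(x)$ satisfies the hypothesis of Lemma \ref{tangent space of special divisor}, then combine the resulting tangent-space bound with the relative-divisor property from \cite{LL22}*{Lemma 2.40}. The only cosmetic difference is that the paper observes that $h(\pi^{-1}x,\pi^{-1}x)\in O_{F_0}^\times$ forces $\cZ_n(\pi^{-1}x)$ to be empty outright (the norm of any $v=\pi^{-1}x(e)$ in a $\pi$-modular Dieudonn\'e module would have to lie in $\pi_0 O_{\breve F_0}$, which it does not), whereas you prove the a priori weaker statement that $\cZ_n(x)(\bar k)\cap\cZ_n(\pi^{-1}x)(\bar k)=\emptyset$ via the norm-valuation computation $h_{\dN_n}(v,v)\in\pi^3 O_{\breve F}$ for $v\in\pi M$; both come down to exactly the same valuation estimate for a $\pi$-modular lattice, so this is a restatement, not a genuinely different method. (Incidentally, since $h_{\dN_n}(v,v)\in\breve F_0$, your bound $\pi^3 O_{\breve F}$ could be sharpened to $\pi_0^2 O_{\breve F_0}$, but that extra precision is not needed.)
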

\begin{proof}
    If $h(x,x)\in\pi^2 O_{F_0}^{\times}$ then $h(\pi^{-1}x,\pi^{-1}x)\in O_{F_0}^{\times}$ and $\cZ_n(\pi^{-1}x)$ is empty. By Lemma \ref{tangent space of special divisor} we see that the tangent space of $\cZ_n(x)$ at every geometric point has dimension $n-2$. As $\cZ_n(x)$ is a relative divisor in $\cN_n$ (\cite{LL22}*{Lemma 2.40}) we conclude that $\cZ_n(x)$ is formally smooth of relative dimension $n-2$ over $\Spf O_{\breve{F}}$.
\end{proof}

\subsection{Proof of Theorem \ref{identify lower RZ space with special divisor}}\label{proof of theorem 5.5}
Now let us prove Theorem \ref{identify lower RZ space with special divisor}. We shall consider $\delta^+$. All arguments apply to $\delta^-$ as well. The structure of the proof is as follows$\colon$
\begin{itemize}
	\item [(1)] We first show $\delta^+$ factors through $\cZ_n(u)^+$.
	\item [(2)] We then show that $\delta^+$ is surjective.
	\item [(3)] We finally show that $\delta^+$ is formally \'etale, and deduce that it is an isomorphism.
\end{itemize}

\begin{proposition}\label{factor through}
	The morphism $\delta^+$ factors through $\mathcal{Z}_n(u)^+$.
\end{proposition}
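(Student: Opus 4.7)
The plan is to produce, for every $S$-point $(X,\iota,\lambda,\rho)=\delta^+(X',\iota',\lambda',\rho')$ in the image of $\delta^+$, an explicit homomorphism $\tilde{g}\colon\ucE_S\to X$ extending the quasi-homomorphism $\rho^{-1}\circ u$. Since $\delta^+$ already factors through $\cN_n^+$ by its very definition, producing such a $\tilde{g}$ forces the image to lie in $\cZ_n(u)\cap\cN_n^+=\cZ_n(u)^+$, which is the desired factorization.

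The key input is a ``dual isogeny'' $\psi\colon X'\times\ucE_S\to X$ to the given $\phi\colon X\to X'\times\ucE_S$. On framing objects, a direct matrix computation from \eqref{phi0} exhibits
\[
\psi_0\coloneqq\id_{\dX_{n-2}}\times\begin{pmatrix}\iota_{\ubE}(\pi)/2 & \iota_{\ubE}(\pi)/2\\ 1 & -1\end{pmatrix}\colon\dX'_n\lrarr\dX_n,
\]
satisfying $\psi_0\phi_0=\iota_{\dX_n}(\pi)$ and $\phi_0\psi_0=\iota_{\dX'_n}(\pi)$, and identifies $\ker\phi_0=\{0\}\times\{0\}\times\ubE[\iota_{\ubE}(\pi)]\subset\dX_n[\iota(\pi)]$. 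To globalize, I would argue that $\ker\phi$ is a finite flat commutative $O_F$-module scheme over $S$ of order $q=|O_F/\pi|$; on each geometric fibre it is $O_F$-isomorphic to $O_F/\pi$, on which $\iota(\pi)$ acts as zero, so a standard fibrewise Nakayama argument applied to the closed immersion $\ker\phi[\iota(\pi)]\hookrightarrow\ker\phi$ yields $\iota_X(\pi)|_{\ker\phi}=0$ globally. Hence $\iota_X(\pi)$ factors uniquely as $\psi\phi$ for some $O_F$-linear $\psi$ lifting $\psi_0$, and the dual identity $\phi\psi=\iota(\pi)$ then follows from the faithful flatness of $\phi$.

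Next, I define $\tilde{g}\coloneqq\psi\circ\iota_2\colon\ucE_S\to X'\times\ucE_S\to X$, where $\iota_2$ is the $O_F$-linear inclusion of the second factor. A direct check on framings gives $\bar{\tilde{g}}=(\rho^{-1}\circ u)\circ\rho_{\ucE}$ on $\ovS$: after composing with the framings, the computation reduces to $\psi_0\circ(0,\id_{\ubE})\colon\ubE\to\dX_n$, which sends $z\mapsto(0,\iota_{\ubE}(\pi)z/2,-z)=u(z)$ by the explicit formulas for $\psi_0$ and $u$. This identifies $\tilde{g}$ as a genuine $O_F$-linear homomorphism extending $\rho^{-1}\circ u$, so $(X,\iota,\lambda,\rho)\in\cZ_n(u)(S)$ as required.

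The only delicate step I foresee is the containment $\ker\phi\subset X[\iota_X(\pi)]$ over a general base $S\in\Nilp_{O_{\breve F}}$; once this is settled and $\psi$ is available, everything collapses to the matrix identity $\psi_0\circ(0,\id_{\ubE})=u$, which is essentially how $u$ was manufactured in the first place from its image $\pi f$ under $\phi_0$.
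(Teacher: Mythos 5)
Your proof is correct and follows essentially the same route as the paper's: both construct the "dual" isogeny $\phi'$ (your $\psi$) with $\phi'\circ\phi=\iota(\pi)$ by showing $\ker\phi$ is killed by $\iota(\pi)$, then define the extension of $\rho^{-1}\circ u$ as the composite $\ucE_S\xra{(0,\id)}X'\times\ucE_S\xra{\phi'}X$ and verify on the special fibre via the explicit matrix $\phi_0'=\psi_0$. The only real difference is in emphasis: the paper disposes of the containment $\ker\phi\subset X[\iota(\pi)]$ in one line (``$\pi$-power torsion of rank $q$, hence killed by $\pi$'') whereas you expand this into a fibrewise-Nakayama argument, and you also prove the superfluous identity $\phi\psi=\iota(\pi)$, which the argument never needs.
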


\begin{proof}
	For $S\in\operatorname{Nilp}_{O_{\breve{F}}}$ and $(X',\iota',\lambda',\rho')\in\mathcal{N}_{n-1}(S)$, the morphism $\delta^+$ provides a quadruple $(X,\iota,\lambda,\rho)\in\mathcal{N}_n^+(S)$ together with an $O_F$-isogeny of degree $q$ lifting $\phi_0$, which we denote by $\phi\colon X\to X'\times\ucE_S$.
	
	Since $\operatorname{ker}(\phi)$ is $\pi$-power torsion and of rank $q$, it is killed by $\pi$. Hence there exists a unique (necessarily $O_F$-linear) isogeny $\phi'\colon X'\times\ucE_S\to X$ such that the composite $\phi'\circ\phi\colon X\to X$ is $\iota(\pi)$.
	
	Let $\tilde{u}$ be the composite $$\ucE_S\stackrel{(0,\operatorname{id})}{\lrarr}X'\times\ucE_S\stackrel{\phi'}{\lrarr}X$$
	which is clearly a morphism. Claim that $\tilde{u}$ lifts $\rho^{-1}\circ u$ so that $(X,\iota,\lambda,\rho)\in\mathcal{Z}_n(u)^+$.
	
	Reduce $\tilde{u}$ from $S$ to its special fibre $\ovS$ we get
	$$\ubE_{\ovS}\stackrel{(0,\operatorname{id})}{\lrarr}X'_{\ovS}\times\ubE_{\ovS}\stackrel{\bar{\phi'}}{\lrarr}X_{\ovS}$$
	and by uniqueness of $\phi'$ we have a commutative diagram
	$$\xymatrix{
		X'_{\ovS} \times \ubE_{\ovS}  
		\ar[d]^-{\rho'\times\operatorname{id}}
		\ar[r]^-{\bar{\phi'}} & X_{\ovS} \ar[d]^-{\rho}\\
		\mathbb{X}_{n-1,\ovS}\times \ubE_{\ovS} \ar[r]^-{\phi_0'} & \mathbb{X}_{n,\ovS}}$$
	where $\phi_0'$ is given by
	$$\phi_0'\coloneqq \iota_{\mathbb{X}_{n-2}}(\pi)\times\begin{pmatrix}
	\frac{\iota_{\ubE}(\pi)}{2}&\frac{\iota_{\ubE}(\pi)}{2}\\1&-1
	\end{pmatrix}$$
    so that $\phi_0'\circ\phi_0=\iota_{\dX_{n,\ovS}}(\pi)$.
    
	Hence it suffices to show that $$\ubE_{\ovS}\stackrel{(0,\operatorname{id})}{\lrarr}X'_{\ovS}\times\ubE_{\ovS}\stackrel{\rho'\times\operatorname{id}}{\lrarr}\mathbb{X}_{n-1,\ovS}\times \ubE_{\ovS} \stackrel{\phi_0'}{\lrarr}\mathbb{X}_{n,\ovS}$$
	is just $u$, which is an easy computation$\colon$
	$$\left(\iota_{\mathbb{X}_{n-2}}(\pi)\times\begin{pmatrix}
	\frac{\iota_{\ubE}(\pi)}{2}&\frac{\iota_{\ubE}(\pi)}{2}\\1&-1
	\end{pmatrix}\right)\cdot\begin{pmatrix}0_{n-2}\\0\\1\end{pmatrix}=\begin{pmatrix}0_{n-2}\\\frac{\iota_{\ubE}(\pi)}{2}\\-1\end{pmatrix}.$$
\end{proof}

Now we have a morphism still denoted by $\delta^+\colon\mathcal{N}_{n-1}\to\mathcal{Z}_n(u)^+$, which is universally injective and formally unramified as it is a closed embedding. To show it is an isomorphism we need to show it is also surjective and formally \'etale.

\begin{proposition}\label{surj of delta}
	The morphism $\delta^+\colon\mathcal{N}_{n-1}\to\mathcal{Z}_n(u)^+$ is surjective.
\end{proposition}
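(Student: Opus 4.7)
Let $z \in \cZ_n(u)^+(\bar k)$ with covariant Dieudonn\'e module $M \subset \dN_n$; I aim to produce $z' \in \cN_{n-1}(\bar k)$ with $\delta^+(z') = z$. The cycle condition $z \in \cZ_n(u)$ translates into the existence of an $O_F$-linear inclusion of Dieudonn\'e modules $u\colon \overline{\dM}_0 \hookrightarrow M$; write $\sigma := u(e)$ for $e$ a generator of $\overline{\dM}_0 = O_{\breve F}\cdot e$ (Lemma \ref{D module of ubE}), so that $h(\sigma,\sigma) = \delta\cdot h(u,u) = \delta\pi^2 \in \pi^2 O_{\breve F_0}^\times$. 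A direct computation in a standard normal basis of the $\pi$-modular lattice $M \cong \pi H^m$ shows that all diagonal hermitian norms $h(m,m)$ for $m \in M$ lie in $\pi_0 O_{\breve F_0}$; since $h(\pi^{-1}\sigma, \pi^{-1}\sigma) = -\delta$ is a unit in $O_{\breve F_0}^\times$, this forces $\pi^{-1}\sigma \notin M$, and the lattice $\tilde M := M + O_{\breve F}\cdot\pi^{-1}\sigma$ strictly contains $M$ with $O_{\breve F}$-co-length one.

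Using the isomorphism of isocrystals $\phi_{0,*}\colon \dN_n \xrightarrow{\sim} \dN'_n = \dN_{n-1}\oplus\overline{\dN}_1$, the images $\phi_{0,*}(\sigma) = (0,\pi e)$ and $\phi_{0,*}(\pi^{-1}\sigma) = (0,e)$ both lie in $L := \phi_{0,*}(\tilde M) = \phi_{0,*}(M) + O_{\breve F}(0,e)$. The key step is to verify that $L$ decomposes orthogonally as $L = M' \oplus \overline{\dM}_0$ for some $O_{\breve F}$-lattice $M' \subset \dN_{n-1}$. Granted this, one reads off $M' = p_1(L)$, and I would then check that $M'$ is almost $\pi$-modular and satisfies the Dieudonn\'e conditions of \S\ref{details of D modules} for $\cN_{n-1}$; these verifications use the explicit basis of $\tilde M$ obtained by applying Corollary \ref{normal basis and ortho} to $M^* \cong H^m$ and the unit-norm element $\pi^{-1}\sigma$, together with the $V$-equivariance of $\phi_{0,*}$ and of the orthogonal decomposition $\dN'_n = \dN_{n-1}\obot\overline{\dN}_1$.

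Let $z' \in \cN_{n-1}(\bar k)$ correspond to $M'$. To verify $\delta^+(z') = z$, I would observe that the inclusion $\phi_{0,*}(M) \subset L = M' \oplus \overline{\dM}_0$ has cokernel of $O_{\breve F}$-length one, generated by the class of $(0,e) \bmod \phi_{0,*}(M)$; this inclusion is precisely the datum on Dieudonn\'e modules of an $O_F$-linear isogeny $\phi\colon X \to X_{z'}\times\ucE$ of degree $q$ lifting $\phi_0$, as required by the construction of $\delta^{\pm}$ in \eqref{def delta}. The resulting tuple defines a point of $\cP'_n$ whose image in $\cN_n$ under $\varphi$ is $z$. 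The parity $+$ rather than $-$ is automatic, since $\delta^+$ and $\delta^-$ respectively land in the two summands $\cN_n^\pm$ of Proposition \ref{even n decomp lem} and $z \in \cN_n^+$ by hypothesis.

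The main obstacle is the orthogonal decomposition $L = M' \oplus \overline{\dM}_0$, equivalently, that the second-component projection $p_2(L) \subset \overline{\dN}_1$ equals $\overline{\dM}_0$. Since $p_2\circ\phi_{0,*}$ is $V$-equivariant, $p_2(L)$ is a $V$-stable $O_{\breve F}$-lattice in the rank-one isocrystal $\overline{\dN}_1$, hence of the form $\pi^k\overline{\dM}_0$ for some $k \in \dZ$; the containment $(0,e) \in L$ gives $k \leq 0$, while the lower bound $k \geq 0$ requires combining the $\pi$-modular pairing condition $h(\phi_{0,*}(m),\phi_{0,*}(\sigma)) \in \pi^{-1}O_{\breve F}$ for $m \in \tilde M$ with the $\pi_0$-integrality of diagonal hermitian norms on $\phi_{0,*}(M)$. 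Once $k = 0$ is established, the block-diagonal decomposition $L = (L \cap \dN_{n-1}) \oplus (0\oplus\overline{\dM}_0)$ follows from $L \cap (0\oplus\overline{\dN}_1) = 0\oplus\overline{\dM}_0$, which is the same norm argument applied to elements with vanishing first component.
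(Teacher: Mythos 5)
Your construction is sound, and it produces exactly the same lattice as the paper: writing $\tilde M = M + O_{\breve F}e$ (note $\pi^{-1}\sigma = e$), the projection $M' = p_1(\tilde M)$ agrees with the lattice $\dL = \pi\bigl((\pi e)^\perp\bigr)^*$ used in the paper, where $(\pi e)^\perp = \{a\in M : h(a,\pi e)=0\}$. Your argument that $p_2(\tilde M) = \overline{\dM}_0$ is also correct in substance, although the clean reason is simply that $e\in M^* = \pi^{-1}M$ forces $h(m,e)\in O_{\breve F}$ for every $m\in M$, and since $h(e,e) = -\delta$ is a unit this gives $p_2(M)\subset\overline{\dM}_0$; the paper sidesteps this entirely by defining $\dL$ directly inside $\dN_{n-1}$ as a twisted dual of the orthogonal complement.

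The genuine gap is that you have mislocated the difficulty. The orthogonal decomposition is routine; what actually carries the proof is the verification that $M'$ satisfies the Dieudonn\'e-theoretic conditions \eqref{VModd} for $\cN_{n-1}(\bar k)$, namely that $M'$ is almost $\pi$-modular, that $\pi_0 M'\subset VM'\subset M'$, and — crucially — that $VM'\subset^{\leq 1}VM' + \pi M'$. None of these are automatic. The $V$-stability of $M'$ requires showing $F(\pi e)^\perp\subset(\pi e)^\perp$ via $Fe = Ve = -\pi e$ combined with Lemma \ref{h(vx,vy)}, then transporting through duality (Corollary \ref{V and dual}). The co-length bound $VM'\subset^{\leq 1}VM'+\pi M'$ encodes the wedge/spin condition cut out by \eqref{odd spin cond}, and in the paper it requires choosing a standard normal basis of $M$ with $\sigma_1\in VM$ (Lemma \ref{existence of good basis}), expanding $\pi e$ in that basis, and splitting into two cases according to whether $\pi e$ has a unit coefficient outside the first hyperbolic plane — these two cases produce co-length $1$ and co-length $0$ respectively, and both do occur. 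Your proposal disposes of all of this with the phrase that you ``would then check'' the conditions ``using the explicit basis of $\tilde M$''; that sentence is where the real content of the proof lives, and nothing in your write-up indicates awareness of the case dichotomy or of why the wedge/spin constraint is satisfied. Until those verifications are actually carried out, the argument does not establish surjectivity.
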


\begin{proof}
	We will just show that $\delta^+$ is a surjection on $\bar{k}$-points and the argument for points in an arbitrary algebraically closed field is the same.
 
    The isogeny $\phi_0$ given in \eqref{phi0} allows us to identify $\mathbb{N}_n$ with $\mathbb{N}_{n-1}\obot\overline{\mathbb{N}}_1$. Follow Lemma \ref{D module of ubE} we let $O_{\breve{F}}e$ be the Dieudonn\'e module of $\ubE$. Then $e$ is also an $\breve{F}$-basis of $\overline{\dN}_1$. Using Dieudonn\'e theory, the $\bar{k}$-points of $\mathcal{N}_{n-1}$ and $\mathcal{N}_n^+$ are related as follows$\colon$
	let $(X',\iota',\lambda',\rho')\in\mathcal{N}_{n-1}(\bar{k})$ with Dieudonn\'e module $\dL\subset \mathbb{N}_{n-1}$ and suppose its image in $\mathcal{N}_n^+(\bar{k})$ has Dieudonn\'e module $\dM$, then
	\begin{equation}\label{L sub M}
	\pi \dL^*\obot \pi O_{\breve{F}}e\subset^1 \dM\subset^1 \dL\obot O_{\breve{F}}e.
	\end{equation}
	
	Suppose there is a point in $\mathcal{N}_n^+(\bar{k})$ with Dieudonn\'e module $\dM$. Then the point is in $\mathcal{Z}_n^+(u)$ is equivalent to that there is a morphism of Diendonn\'e modules
	$$u\colon O_{\breve{F}}e\lrarr \dM$$ such that when viewing $\dM$ in $\mathbb{N}_{n-1}\obot\overline{\mathbb{N}}_1$ via $\phi_0$, the morphism $u$ sends $e$ to $\pi e$ (as $\phi_0 u\colon\ubE\to\dX_n=\dX_{n-1}\times\ubE$ is just $(0_{n-1},\iota_{\ubE}(\pi))$). This turns out to be a simple condition that $\pi e\in \dM$.
	
	It is enough to show that given $\pi$-modular lattice $\dM\subset \mathbb{N}_n$ satisfying condition \eqref{VM} which contains $\pi e$, we can recover the almost $\pi$-modular lattice $\dL\subset \mathbb{N}_{n-1}$ satisfying condition \eqref{VModd} and \eqref{L sub M}. We set
	\begin{equation}\label{define L out of M}
	    (\pi e)^{\perp}\coloneqq\{a\in \dM \res h(a,\pi e)=0\}\text{ and }\dL\coloneqq (\pi^{-1}(\pi e)^{\perp})^{*}=\pi ((\pi e)^{\perp})^{*}.
	\end{equation}
	
	\textbf{Check condition \eqref{L sub M}}$\colon$ As $\dM$ is $\pi$-modular, $\dM\simeq \pi H^{n/2}$ and under a standard normal basis $\dM$ has moment matrix
	$$\begin{pmatrix}
	0 & -\pi\\ \pi & 0
	\end{pmatrix}^{\obot n/2}.$$
	By \eqref{relation of herm forms}, $\pi e\in \dM$ is of hermitian norm $\delta\cdot h(u,u)=\delta\pi^2$. If we divide everything by $\pi$ and apply Lemma \ref{ortho in H^s} and Corollary \ref{normal basis and ortho}, we see that $(\pi e)^{\perp}$ will have the moment matrix under a normal basis
	$$\begin{pmatrix}
	0 & -\pi\\ \pi & 0
	\end{pmatrix}^{\obot n/2 - 1}\obot (-\delta\pi^2)$$
	and $(\pi e)^{\perp}\obot \pi O_{\breve{F}}e\subset^1 \dM$. Correspondingly $\dL$ will have the moment matrix under a normal basis
	$$\begin{pmatrix}
	0 & -\pi\\ \pi & 0
	\end{pmatrix}^{\obot n/2 - 1}\obot (\delta).$$
	In particular $\dL$ is almost $\pi$-modular and $\pi \dL^{*}\obot \pi O_{\breve{F}}e\subset^1 \dM$. Then from the fact that $\dM^{*}=\pi^{-1}\dM$, $\dL$ is almost $\pi$-modular and $(O_{\breve{F}}e)^*=O_{\breve{F}}e$, we get $\dM\subset^1 \dL\obot O_{\breve{F}}e$.
	
	\textbf{Check condition \eqref{VModd}}$\colon$ We need to show $\dL$ satisfies condition \eqref{VModd}. We first claim that $F \dL^{*}\subset \dL^{*}$. This is equivalent to $F (\pi e)^{\perp}\subset (\pi e)^{\perp}$. By Lemma \ref{D module of ubE} we have $Ve=Fe=-\pi e$. Take $z\in (\pi e)^{\perp}$, we have
    \begin{equation*}
        h(VFz,V\pi e)=h(\pi_0 z,\pi Ve)=\pi_0 h(z,\pi(-\pi e))=\pi_0\pi h(z,\pi e)=0.
    \end{equation*}
    Thus by Lemma \ref{h(vx,vy)} we have $h(Fz,\pi e)=0$ and $F (\pi e)^{\perp}\subset (\pi e)^{\perp}$. The claim is proved. From $F\dL^*\subset\dL^*$ we get $V^{-1}\dL^*\subset\pi_0^{-1}\dL^*$.
 
    To show $V\dL\subset \dL$, we claim that
	$$h(x,y)\in O_{\breve{F}}\text{ for any }x\in \dL^{*}, y\in V\dL.$$
	We let $y=V y'\in V\dL$ and $x\in \dL^{*}$. Then $V^{-1}x\in V^{-1}\dL^*\subset\pi_0^{-1}\dL^*$. Now
    \begin{equation*}
        h(V^{-1}x,V^{-1}y)=h(V^{-1}x,y')\in h(\pi_0^{-1}\dL^*,\dL)\subset\pi_0^{-1}O_{\breve{F}}
    \end{equation*}
    and by Lemma \ref{h(vx,vy)} we have $h(x,y)\in O_{\breve{F}}$. The claim is proved and $V\dL\subset\dL$.
	
	Similarly to show $\pi_0 \dL\subset V\dL$, it is enough to show $F \dL\subset \dL$. and essentially to show $V \dL^{*}\subset \dL^{*}$, which can be proved the same way using $Fe =-\pi e$ and Lemma \ref{h(vx,vy)}.
	
	The only thing left in condition \eqref{VModd} is to show $V\dL\subset^{\leq 1} \pi \dL+V\dL$. Write $n=2m$ and pick a standard normal basis $e_1,f_1,\dots,e_m,f_m$ of $\dM$ such that $e_1\in V\dM$. This is possible by Lemma \ref{existence of good basis} and moreover by Lemma \ref{describe VM} we have
	$$V\dM=\langle e_1, \pi_0 f_1, \pi e_2,\pi f_2,\dots, \pi e_m,\pi f_m \rangle_{O_{\breve{F}}}.$$
	Consider $\pi e\in \dM$ under this basis and write
	$$\pi e=a_1 e_1 +b_1 f_1 +\dots + a_m e_m+b_m f_m.$$
	If $\pi e\in V\dM$ then $F \pi e\in FV \dM=\pi_0 \dM$, but $F \pi e=\pi Fe=\pi (-\pi e)=-\pi_0 e$ so $e\in \dM$ contradiction! Thus $\pi e\notin V\dM$. Also $\pi\cdot \pi e=\pi (-V e)=-V(\pi e)\in V\dM$. Hence we must have $b_1\in (\pi)$ and either one of $a_2,b_2,\dots,a_m,b_m$ is in $O_{\breve{F}}^{\times}$, or $a_2,b_2,\dots,a_m,b_m\in (\pi)$ and $b_1\notin (\pi_0)$.
	
	We then have two cases. We will see they correspond to the co-length of $V\dL$ in $\pi\dL+V\dL$ is $0$ or $1$, by finding a standard normal basis of $\dM$ and describing $\dL, V\dL$ in terms of this basis.
	
	\textbf{Case $1$}$\colon$ Suppose one of $a_2, b_2,\dots, a_m,b_m$ is in $O^{\times}_{\breve{F}}$. We may assume $a_m=1$. Then by the proof of Corollary \ref{normal basis and ortho} we can find a standard normal basis $\sigma_1,\tau_1,\dots,\sigma_m,\tau_m$ of $\dM$ such that $\pi e=\sigma_m+\frac{\pi\delta}{2}\tau_m$ and 
    $$\langle \pi e\rangle^{\perp}=\langle \sigma_1,\tau_1,\dots,\sigma_{m-1},\tau_{m-1},\sigma_m-\frac{\pi\delta}{2}\tau_m \rangle_{O_{\breve{F}}}.$$
    Moreover as $b_1\in (\pi)$ and $\pi f_m\in V\dM$, by construction \eqref{sigma and tau} the element $\sigma_1=e_1+\bar{b}_1f_m$ is still in $V\dM$. Thus by Lemma \ref{describe VM} again we have
	$$V\dM=\langle \sigma_1,\pi_0 \tau_1,\pi \sigma_2,\pi\tau_2,\dots,\pi\sigma_m,\pi\tau_m\rangle_{O_{\breve{F}}}.$$
	As $V(\pi e)=-\pi\pi e$ we have $V(\sigma_m+\frac{\pi\delta}{2}\tau_m)=-\pi(\sigma_m+\frac{\pi\delta}{2}\tau_m)$. As $V\dL^*\subset\dL^*$, we have $V\langle \pi e\rangle^{\perp}\subset \langle \pi e\rangle^{\perp}$. We may then write
	$$V(\sigma_m-\frac{\pi\delta}{2}\tau_m)=\alpha\cdot(\sigma_m-\frac{\pi\delta}{2}\tau_m)+\alpha_1 \sigma_1+\beta_1\tau_1+\dots+\alpha_{m-1}\sigma_{m-1}+\beta_{m-1}\tau_{m-1}\in V\dM.$$
	Hence $\alpha\in (\pi)$ as $\sigma_m\notin V\dM$ and $\pi\sigma_m\in V\dM$. Thus
	\begin{equation}\label{case 1 sigma m}
	    2V\sigma_m=V(\sigma_m-\frac{\pi\delta}{2}\tau_m)+V(\sigma_m+\frac{\pi\delta}{2}\tau_m)\in(\pi_0)\tau_m+\langle \sigma_1,\pi_0\tau_1,\dots,\pi\sigma_{m-1},\pi\tau_{m-1},\pi\sigma_m\rangle_{O_{\breve{F}}}.
	\end{equation}
	
	Now let $\gamma$ be one of $\sigma_1,\tau_1,\dots,\sigma_{m-1},\tau_{m-1}$. From $h(\gamma,\sigma_m+\frac{\pi\delta}{2}\tau_m)=0$ and Lemma \ref{h(vx,vy)} we get
	$$h(V\gamma,V(\sigma_m+\frac{\pi\delta}{2}\tau_m))=0.$$
	If we write $V\gamma\in a\pi\sigma_m+b\pi\tau_m+\langle \sigma_1,\pi_0\tau_1,\dots,\pi\sigma_{m-1},\pi\tau_{m-1}\rangle_{O_{\breve{F}}}$ then
	$$h(a\pi\sigma_m+b\pi\tau_m,-\pi(\sigma_m+\frac{\pi\delta}{2}\tau_m))=\frac{a\pi_0^2\delta}{2}+b\pi\pi_0=0.$$
	Hence $b\in(\pi)$ and 
    \begin{equation}\label{case 1 others}
        V\gamma\in(\pi_0)\tau_m+\langle \sigma_1,\pi_0\tau_1,\dots,\pi\sigma_{m-1},\pi\tau_{m-1},\pi\sigma_m\rangle_{O_{\breve{F}}}.
    \end{equation}
	
	Consider $N= \pi O_{\breve{F}} e \obot\langle \pi e\rangle^{\perp}\subset^1 \dM$. Then $N=\langle \sigma_1,\tau_1,\dots,\sigma_{m-1},\tau_{m-1},\sigma_m,\pi \tau_m\rangle_{O_{\breve{F}}}$ and $\pi N^{*}=\dL\obot O_{\breve{F}}e$. As $V\tau_m\in V\dM$, we have
	$$V(\pi\tau_m)\in(\pi_0)\tau_m+\langle \sigma_1,\pi_0\tau_1,\dots,\pi\sigma_{m-1},\pi\tau_{m-1},\pi\sigma_m\rangle_{O_{\breve{F}}}.$$
	Combine \eqref{case 1 sigma m} and \eqref{case 1 others} we have shown that
	$$VN\subset \langle \sigma_1,\pi_0\tau_1,\pi\sigma_2,\pi\tau_2,\dots,\pi\sigma_{m-1},\pi\tau_{m-1},\pi\sigma_m, \pi_0\tau_m\rangle_{O_{\breve{F}}}\subset^1 V\dM.$$
	As $VN\subset^1 V\dM$, we conclude that
	\begin{equation}\label{VN}
	    VN=\langle \sigma_1,\pi_0\tau_1,\pi\sigma_2,\pi\tau_2,\dots,\pi\sigma_{m-1},\pi\tau_{m-1},\pi\sigma_m, \pi_0\tau_m\rangle_{O_{\breve{F}}}.
	\end{equation}

    By construction \eqref{define L out of M}, we have
    \begin{equation}\label{L in case 1}
        \dL=\pi(\langle \pi e\rangle^{\perp})^*=\langle \sigma_1,\tau_1,\dots,\sigma_{m-1},\tau_{m-1},\pi^{-1}\sigma_m-\frac{\delta}{2}\tau_m\rangle_{O_{\breve{F}}}.
    \end{equation}
    We have shown that $V\dL\subset\dL$ and $V(\dL\obot O_{\breve{F}}e)=V(\pi N^*)=\pi V(N^*)=\pi\pi_0(VN)^*$ by Corollary \ref{V and dual}. From \eqref{VN} we get
    \begin{equation}\label{VL in case 1}
        \begin{split}
            V\dL\obot \pi O_{\breve{F}}e&=\langle \sigma_1,\pi_0\tau_1,\pi\sigma_2,\pi\tau_2,\dots,\pi\sigma_{m-1},\pi\tau_{m-1},\sigma_m,\pi\tau_m\rangle_{O_{\breve{F}}},\\
            V\dL&=\langle \sigma_1,\pi_0\tau_1,\pi\sigma_2,\pi\tau_2,\dots,\pi\sigma_{m-1},\pi\tau_{m-1},\sigma_m-\frac{\pi\delta}{2}\tau_m\rangle_{O_{\breve{F}}}.
        \end{split}
    \end{equation}
    It is then clear that $V\dL\subset^1\pi \dL+V\dL$.  
	
	\textbf{Case $2$}$\colon$ Suppose $b_1,a_2,b_2,\dots,a_m,b_m\in(\pi)$. As 
    $$h(\pi e,\pi e)=\delta\pi^2=\sum_{i=1}^{m}(-a_i\bar{b}_i+\bar{a}_ib_i)\pi\in \pi^2 O^{\times}_{\breve{F}}$$
    we must have $a_1\in O_{\breve{F}}^{\times}$ and we may assume $a_1=1$ after normalizing $e_1$ and $f_1$ by a unit. Again by Corollary \ref{normal basis and ortho} we can find a standard normal basis $\sigma_1,\tau_1,\dots,\sigma_m,\tau_m$ of $\dM$ such that $\pi e=\sigma_1+\frac{\pi\delta}{2}\tau_1$ and 
    $$\langle \pi e\rangle^{\perp}=\langle \sigma_1-\frac{\pi\delta}{2}\tau_1,\sigma_2,\tau_2,\dots,\sigma_m,\tau_m\rangle_{O_{\breve{F}}}.$$
    By construction \eqref{phi'} and \eqref{sigma 1}, $\sigma_1=e_1+(b_1-\frac{\pi\delta}{2})f_1+a_2 e_2+b_2 f_2+\dots+a_m e_m+b_m f_m$. As $b_1,a_2,b_2,\dots,a_m,b_m\in(\pi)$, we have $a_2 e_2,b_2f_2,\dots,a_m e_m, b_m f_m\in V\dM$. Note that
	$$\delta\pi^2=2\pi b_1-\sum_{i=2}^{m}(a_i\bar{b}_i-\bar{a}_ib_i)\pi$$
	where $\sum\limits_{i=2}^{m}(a_i\bar{b}_i-\bar{a}_ib_i)\pi\in \pi^4O_{\breve{F}}$. Hence
	$b_1-\frac{\pi\delta}{2}\in \pi^3O_{\breve{F}}$
	and $(b_1-\frac{\pi\delta}{2})f_1\in (\pi_0)f_1\in V\dM$. Thus $\sigma_1\in V\dM$ and again by Lemma \ref{describe VM} we have 
	$$V\dM=\langle \sigma_1,\pi_0 \tau_1,\pi \sigma_2,\pi\tau_2,\dots,\pi\sigma_m,\pi\tau_m\rangle_{O_{\breve{F}}}.$$
	Similarly as in Case $1$, using $V(\sigma_1+\frac{\pi\delta}{2}\tau_1)=-\pi(\sigma_1+\frac{\pi\delta}{2}\tau_1)$ we can show that if $\gamma$ is one of $\sigma_1,\pi\tau_1,\sigma_2,\tau_2,\dots,\sigma_m,\tau_m$ then
	$$V\gamma\in\langle \pi\sigma_1,\pi_0\tau_1,\pi\sigma_2,\pi\tau_2,\dots,\pi\sigma_m,\pi\tau_m\rangle_{O_{\breve{F}}}.$$
	Let $N=O_{\breve{F}}e\obot\langle \pi e\rangle^{\perp}\subset^1 \dM$, then $N=\langle \sigma_1,\pi\tau_1,\sigma_2,\tau_2,\dots,\sigma_m,\tau_m\rangle_{O_{\breve{F}}}$. We have shown that
	$$VN\subset \langle \pi\sigma_1,\pi_0\tau_1,\pi\sigma_2,\pi\tau_2,\dots,\pi\sigma_m,\pi\tau_m\rangle_{O_{\breve{F}}}\subset^1 V\dM$$
	and $VN\subset^1 V\dM$. Hence 
    \begin{equation}\label{case 2 VN}
        VN=\langle \pi\sigma_1,\pi_0\tau_1,\pi\sigma_2,\pi\tau_2,\dots,\pi\sigma_m,\pi\tau_m\rangle_{O_{\breve{F}}}=\pi N.
    \end{equation}

    By construction \eqref{define L out of M} we have
    \begin{equation}\label{L in case 2}
        \dL=\pi(\langle \pi e\rangle^{\perp})^*=\langle \pi^{-1}\sigma_1-\frac{\delta}{2}\tau_1,\sigma_2,\tau_2,\dots,\sigma_m,\tau_m\rangle_{O_{\breve{F}}}.
    \end{equation}
    From \eqref{case 2 VN} we get
    \begin{equation}\label{VL in case 2}
        V\dL=\langle \sigma_1-\frac{\pi\delta}{2}\tau_1,\pi\sigma_2,\pi\tau_2,\dots,\pi\sigma_m,\pi\tau_m\rangle_{O_{\breve{F}}}.
    \end{equation}
    It is clear that $V\dL=\pi\dL$ and $V\dL\subset^0 V\dL+\pi \dL$.
	
	Combine the two cases we see we always have $V\dL\subset^{\leq 1}V\dL+\pi \dL$ and thus we have shown that $\delta^+$ is surjective.
\end{proof}

The last step is to show that $\delta^{+}$ is formally \'etale.
\begin{proof}[Proof of Theorem \ref{identify lower RZ space with special divisor}]
	By Corollary \ref{N_n(u) is smooth}, we see that $\mathcal{Z}_n(u)^+$ is formally smooth of relative dimension $n-2$ over $\Spf O_{\breve{F}}$. For simplicity we shall write $\cN$ for $\mathcal{N}_{n-1}$ and $\cZ$ for $\mathcal{Z}_n(u)^+$. Also we write $\mathcal{O}$ for $\Spf O_{\breve{F}}$. Then we have a commutative diagram
	$$\xymatrix{
		\mathcal{N}\ar[rr]^-{\delta^+}\ar[rd]_-{f}&&\mathcal{Z}\ar[ld]^-{g}\\
		&\mathcal{O}=\operatorname{Spf}(O_{\breve{F}})
	}$$
	where both $f$ and $g$ are morphisms between locally Noetherian formal schemes, which are formally locally of finite type, separated, formally smooth of relative dimension $n-2$. Then locally both $f$ and $g$ are pseudo-finite in the sense of \cite{TLR05}*{Definition 1.3.1}. As $\delta^+$ is a closed embedding, we have a short exact sequence of coherent $\cO_{\cN}$-modules (\cite{TLR05}*{Proposition 2.2.8, Proposition 2.3.4})
	$$\cC_{\cN/\cZ}\lrarr(\delta^{+})^{*}\widehat{\Omega}^1_{\cZ/\cO}\lrarr\widehat{\Omega}^1_{\cN/\cO}\lrarr 0$$
	where both $(\delta^+)^*\widehat{\Omega}^1_{\cZ/\cO}$ and $\widehat{\Omega}^1_{\cN/\cO}$ are locally free $\cO_{\cN}$-modules of the same rank as $f$ and $g$ are smooth of same relative dimension (\cite{TLR05}*{Proposition 2.5.5}). Then the surjection
	$$(\delta^{+})^{*}\widehat{\Omega}^1_{\cZ/\cO}\lrarr\widehat{\Omega}^1_{\cN/\cO}$$
	must be an isomorphism. Hence $\delta^+$ is formally \'etale (\cite{TLR05}*{Corollary 2.5.10}). Combine with Proposition \ref{surj of delta} we conclude that the closed embedding $\delta^+$ is an isomorphism.
\end{proof}

\subsection{Pullback $\cZ$-cycles and $\cY$-cycles}

Now we want to relate intersection of special cycles on $\mathcal{N}_{n-1}$ to that on $\mathcal{N}_n$. We study this by pullbacking $\cZ$-cycles (resp.\ $\cY$-cycles) from $\cN_n$ to $\cN_{n-1}$ and identify them with $\cZ$-cycles (resp.\ $\cY$-cycles) on $\cN_{n-1}$. More precisely, let $x\in\mathbb{V}_{n-1}$ be a nonzero element. Then $x$ gives special cycles $\mathcal{Z}_{n-1}(x)$ and $\cY_{n-1}(x)$ on $\mathcal{N}_{n-1}$. Via the hermitian embedding \eqref{embed of dV}, we can also think of $x\in\dV_n$, which gives special cycles $\mathcal{Z}_n(x)$ and $\cY_n(x)$ on $\mathcal{N}_n$. We can pull back $\mathcal{Z}_n(x)$ (resp.\ $\cY_n(x)$) along the closed embedding $\delta^+$, which is $\mathcal{Z}_n(x)\cap\mathcal{Z}_n(u)^+\cong\mathcal{Z}_n(x)\cap\mathcal{N}_{n-1}$ as shown in the diagram below
\begin{equation}\label{diagram}
    \xymatrix{
	\mathcal{Z}_n(x)\cap\mathcal{N}_{n-1}\ar[d]\ar[r]^-{\sim}&\mathcal{Z}_n(x)\cap\mathcal{Z}_n(u)^+\ar[d]\ar[r]&\mathcal{Z}_n(x)\ar[d]\\
	\mathcal{N}_{n-1}\ar[r]^-{\delta^+}_-{\simeq}&\mathcal{Z}_n(u)^+\ar[r]&\mathcal{N}_n.
}
\end{equation}

\textbf{Pullback $\cZ$-cycle}$\colon$For a $\Spf O_{\breve{F}}$-scheme $S$, we have
\begin{align*}
    \cZ_n(x)\cap\cN_{n-1}(S)&=\left\{ \begin{aligned}(X',\iota',\lambda',\rho')\in\cN_{n-1}(S)\text{ such that if its image }\\\text{ under }\delta^+\text{ is }(X,\iota,\lambda,\rho)\in\cN_n^+(S)\text{ then there is }\\\text{ a homomorphism }
    \ucE_S\to X\text{ lifting }\rho^{-1}x\end{aligned}\right\},\\
    \cZ_{n-1}(x)(S)&=\left\{ \begin{aligned} (X',\iota',\lambda',\rho')\in\cN_{n-1}(S)\text{ such that there is }\\\text{ a homomorphism }\ucE_S\to X'\text{ lifting }(\rho')^{-1}x \end{aligned} \right\}.
\end{align*}

Then we have a morphism denoted by $\delta^+_x\colon$
\begin{align*}
    \cZ_n(x)\cap\cN_{n-1}&\lrarr\cZ_{n-1}(x)\\
    ( (X',\iota',\lambda',\rho'),\tilde{x}\colon\ucE_S\to X)&\longmapsto ( (X',\iota',\lambda',\rho'),\ucE_S\stackrel{\tilde{x}}{\lrarr}X\stackrel{\phi}{\lrarr}X'\times \ucE_S\stackrel{p_1}{\lrarr}X')
\end{align*}
where $\phi\colon X\to X'\times\ucE_S$ is the $O_F$-linear isogeny lifting $\phi_0$. The morphism is well-defined because we have the following commutative diagram over $\ovS$
$$\xymatrix{
\ubE_{\ovS}\ar[rr]^-{x}\ar[drr]_-{(x,0)}&&\dX_{n,\ovS}\ar[rr]^-{\rho^{-1}}\ar[d]^-{\phi_0}&&X_{\ovS}\ar[d]^-{\bar{\phi}}\\
&&\dX_{n-1,\ovS}\times\ubE_{\ovS}\ar[rr]^-{(\rho')^{-1}\times\operatorname{id}}&&X'_{\ovS}\times\ubE_{\ovS}.
}$$

Naively we would like the morphism $\delta_x^{+}$ to be an isomorphism for all $x\in \mathbb{V}_{n-1}$. However this is not possible. On one hand, $\mathcal{Z}_n(x)=\emptyset$ if $h(x,x)\notin \pi_0 O_{F_0}$. On the other hand, we have the following Proposition.

\begin{proposition}\label{N n-2=Z n-1 x}
If $x\in\dV_{n-1}$ has hermitian norm $h(x,x)=1$ then we have natural isomorphism
\begin{equation*}
    \cN_{n-2}\stackrel{\sim}{\lrarr}\cZ_{n-1}(x).
\end{equation*}  
\end{proposition}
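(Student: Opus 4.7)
The plan is to construct the map $\iota\colon \cN_{n-2} \to \cZ_{n-1}(x)$ as a product with $\ucE_S$ and to exhibit an explicit two-sided inverse, exploiting the fact that $h(x,x) = 1 \in O_{F_0}^\times$ forces any extension $\tilde x$ of $x$ to split off a direct $\ucE_S$-summand of the universal $X$.

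First I would use the transitivity of $U(\dV_{n-1})$ on norm-$1$ vectors together with the compatibility \eqref{action of unitary group} to reduce to the case where $x = (0, \id_\ubE)$ is the distinguished norm-$1$ vector under the orthogonal decomposition $\dV_{n-1} = \dV(\dX_{n-2}) \obot \dV(\ubE)$ arising from the product framing $\dX_{n-1} = \dX_{n-2} \times \ubE$ of \eqref{BX_n odd}; a direct computation using $\lambda_{\dX_{n-1}} = \lambda_{\dX_{n-2}} \times \lambda_\ubE$ yields $h(x,x) = 1$. I then define
\[
\iota\colon (X', \iota_{X'}, \lambda', \rho') \longmapsto (X' \times \ucE_S, \iota_{X'} \times \iota_\ucE, \lambda' \times \lambda_\ucE, \rho' \times \rho_\ucE).
\]
Since $X'$ satisfies \eqref{wedge cond} and \eqref{spin cond}, so does $X' \times \ucE_S$, whence it satisfies \eqref{odd spin cond} by Remark \ref{odd N_n crit}, and $\iota$ is well-defined. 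The tautological inclusion $(0, \id)\colon \ucE_S \to X' \times \ucE_S$ lifts $\rho^{-1} \circ x$, so $\iota$ factors through $\cZ_{n-1}(x)$.

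For the inverse direction, given $(X, \iota_X, \lambda, \rho) \in \cZ_{n-1}(x)(S)$ with extension $\tilde x\colon \ucE_S \to X$, I would form the adjoint
\[
\tilde x^* := \lambda_\ucE^{-1} \circ \tilde x^\vee \circ \lambda\colon X \longrightarrow \ucE_S.
\]
The composition $\tilde x^* \circ \tilde x$ reduces on $\ovS$ to $\iota_\ubE(h(x,x)) = \id_\ubE$, and the rigidity of lifts of morphisms of formal $O_F$-modules along $\ovS \hra S$ then forces $\tilde x^* \circ \tilde x = \id_{\ucE_S}$ globally. Hence $\tilde x$ realizes $\ucE_S$ as an $O_F$-module direct summand of $X$, and setting $X' := \ker(\tilde x^*)$ gives $X = X' \times \ucE_S$. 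The identity $\tilde x^\vee \circ \lambda \circ \tilde x = \lambda_\ucE \circ \tilde x^* \circ \tilde x = \lambda_\ucE$ forces the polarization to split as $\lambda = \lambda' \times \lambda_\ucE$, and the framing splits as $\rho = \rho' \times \rho_\ucE$ via the product decomposition of $\dX_{n-1}$. I then verify that $(X', \iota_X|_{X'}, \lambda', \rho')$ is a point of $\cN_{n-2}$: the signature is $(1, n-3)$; the kernel of $\lambda'$ coincides with that of $\lambda$, of rank $q^{n-2}$, and is contained in $X'[\iota_{X'}(\pi)]$; and the Kottwitz, wedge, and spin conditions restrict to the direct summand. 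The two constructions are then immediately mutually inverse by inspection.

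The principal technical point will be justifying $\tilde x^* \circ \tilde x = \id_{\ucE_S}$ as a literal equality of morphisms (not merely of quasi-isogenies), which requires applying the rigidity of lifts along the nilpotent thickening $\ovS \hra S$ to this composition, using crucially that $h(x,x) = 1$ as computed on the framing object. Once this direct-summand splitting is in hand, the splitting of $\lambda$, $\rho$, and $\iota_X$ follows formally from the decomposition $X = X' \times \ucE_S$, and all remaining moduli-theoretic conditions on $X'$ descend from the corresponding conditions on $X$.
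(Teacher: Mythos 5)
Your proposal follows essentially the same route as the paper, which invokes ``the same argument as in Terstiege, Lemma 2'': reduce via the $U(\dV_{n-1})$-action to $x_0 = (0,\id_{\ubE})$, construct the forward map $\delta_{\cN}$ by taking the product with $\ucE_S$, and invert by using rigidity to force $\tilde{x}^*\circ\tilde{x} = \id_{\ucE_S}$ and split $X \simeq X'\times\ucE_S$ as polarized formal $O_F$-modules.

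There is, however, one step that your proposal passes over too quickly: checking that the resulting $X'$ actually satisfies the moduli conditions of $\cN_{n-2}$. You assert that ``the Kottwitz, wedge, and spin conditions restrict to the direct summand.'' But for $n-1$ odd, the hypothesis imposed on a point of $\cN_{n-1}$ is the condition \eqref{odd spin cond}, which by Remark \ref{odd N_n crit} is strictly weaker than Wedge $+$ Spin: it permits the worst point, at which $\iota(\pi)$ kills $\Lie X_s$. Since $\iota_{\ucE}(\pi)$ already kills $\Lie\ucE_s$ at every geometric point, at such a worst point $\iota_{X'}(\pi)$ would also kill $\Lie X'_s$, so the constructed $X'$ would fail the Spin condition \eqref{spin cond} required for $\cN_{n-2}$. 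For the inverse to be well-defined one must therefore also verify that $\cZ_{n-1}(x_0)$ avoids the worst point of $\cN_{n-1}$; this is true, but rests on a Dieudonn\'e-theoretic observation (essentially, nonsplitness of the hermitian space $\dV_{n-2}$ via \eqref{relation of herm forms}), not on a formal ``restriction to the summand.'' Apart from this omission --- which the paper's one-line citation of Terstiege arguably also makes --- the remaining steps (rigidity, splitting of $\lambda$ and $\rho$, well-definedness of the forward map via Remark \ref{odd N_n crit}) are sound and match the intended argument.
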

\begin{proof}
By \eqref{action of unitary group}, after applying an action of $U(\dV_{n-1})$, we only need to prove this for
$$x_0=(0_{n-2},\operatorname{id}_{\ubE})\colon\ubE\to\dX_{n-1}=\dX_{n-2}\times\ubE.$$

We have a closed embedding of formal schemes
\begin{equation*}
    \delta_{\cN}\colon
   \xymatrix@R=0ex{
	   \cN_{n-2}\quad\quad \ar[r]  & \quad\quad {\cN}_{n-1}\\
		(X, \iota, \lambda, \rho) \ar@{|->}[r]  
		   &  \bigl(X \times \ucE, \iota \times \iota_{\ucE}, \lambda \times \lambda_{\ucE}, \rho \times \rho_{\ucE}\bigr)
	}.
\end{equation*}
It is direct to see that $\delta_{\cN}$ factors through $\cZ_{n-1}(x_0)$. By the same argument as in \cite{Ter13}*{Lemma 2}, we see that $\delta_{\cN}$ induces an isomorphism $\cN_{n-2}\simeq \cZ_{n-1}(x_0)$.
\end{proof}

If we choose $x\in\dV_{n-1}$ of hermitian norm $1$ then $\cZ_n(x)\cap\cN_{n-1}$ is empty while $\cZ_{n-1}(x)\simeq\cN_{n-2}$ is not. The morphism $\delta_x^+$ can not be an isomorphism in this case.

\begin{proposition}\label{identify special divisors}
	The morphism $\delta_x^+\colon\mathcal{Z}_n(x)\cap\mathcal{N}_{n-1}\to\mathcal{Z}_{n-1}(x)$ is an isomorphism if $h(x,x)\in \pi_0 O_{F_0}$.
\end{proposition}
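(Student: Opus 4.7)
The plan is to follow the three-step strategy used in the proof of Theorem~\ref{identify lower RZ space with special divisor}. First, I observe that $\delta_x^+$ is in fact the inclusion $\cZ_n(x)\cap\cN_{n-1}\hookrightarrow\cZ_{n-1}(x)$ of closed subschemes of $\cN_{n-1}$: given $\tilde x\colon\ucE_S\to X$ lifting $\rho^{-1}x$ (with $x\in\dV_n$ identified via $\phi_0^{-1}(x,0)$), the identity $\bar\phi\rho^{-1}=((\rho')^{-1}\times\operatorname{id})\phi_0$ on the special fibre (already used in the proof of Proposition~\ref{factor through}) shows that $p_1\phi\tilde x\colon\ucE_S\to X'$ lifts $(\rho')^{-1}x$. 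Hence the content of the proposition is the equality of these two closed subschemes of $\cN_{n-1}$.

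Second, I establish surjectivity of $\delta_x^+$ on $\bar k$-points by a Dieudonn\'e module calculation. Given $(X',\iota',\lambda',\rho',\tilde x')\in\cZ_{n-1}(x)(\bar k)$, the element $x$ corresponds to a vector in the almost $\pi$-modular Dieudonn\'e module $\dL\subset\dN_{n-1}$. Let $\dM\subset\dN_n$ be the $\pi$-modular Dieudonn\'e module of $\delta^+(X',\ldots)$; under the identification $\dN_n\simeq\dN_{n-1}\obot\overline{\dN}_1$ via $\phi_0$, producing a lift of $\rho^{-1}x$ amounts to verifying $(x,0)\in\dM$. In each of the two cases of Proposition~\ref{surj of delta}, there is a standard normal basis $\sigma_1,\tau_1,\ldots,\sigma_m,\tau_m$ of $\dM$ (with $\pi e=\sigma_{\star}+\tfrac{\pi\delta}{2}\tau_{\star}$ for some index $\star$) such that $\dL$ admits an $O_{\breve F}$-basis consisting of the remaining $\sigma_i,\tau_i$ together with one boundary element $\pi^{-1}\sigma_{\star}-\tfrac{\delta}{2}\tau_{\star}$. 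Expanding $x=\sum_{i\neq\star}(a_i\sigma_i+b_i\tau_i)+c(\pi^{-1}\sigma_{\star}-\tfrac{\delta}{2}\tau_{\star})$, a direct computation using $h(\sigma_i,\tau_j)=-\pi\delta_{ij}$ gives
\[
h(x,x)=\sum_{i\neq\star}(\bar a_i b_i-a_i\bar b_i)\pi+c\bar c\,\delta,
\]
whose first summand lies in $\pi_0 O_{\breve F_0}$ while the second is $|c|^2$ times the unit $\delta$. The hypothesis $h(x,x)\in\pi_0 O_{F_0}\subset\pi_0 O_{\breve F_0}$ therefore forces $c\in\pi O_{\breve F}$, so that $c(\pi^{-1}\sigma_{\star}-\tfrac{\delta}{2}\tau_{\star})=(c/\pi)\sigma_{\star}-(c\delta/2)\tau_{\star}\in\dM$, and hence $(x,0)\in\dM$.

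Third, I upgrade from bijection on $\bar k$-points to an isomorphism via a formal smoothness argument. Both $\cZ_{n-1}(x)$ (for $h(x,x)\in\pi_0 O_{F_0}$) and $\cZ_n(x)\cap\cN_{n-1}\simeq\cZ_n(x)\cap\cZ_n(u)^+$ are relative divisors on their respective ambient spaces; a tangent space computation modelled on Lemma~\ref{tangent space of special divisor} (combined with Corollary~\ref{N_n(u) is smooth} for the intersection side, since $h(u,u)=\pi^2\in\pi^2 O_{F_0}^{\times}$) shows both are formally smooth of relative dimension $n-3$ over $\Spf O_{\breve F}$. Since $\delta_x^+$ is then a closed embedding between formally smooth formal schemes of the same relative dimension that is bijective on geometric points, the K\"ahler differential argument at the end of \S\ref{proof of theorem 5.5} applies verbatim: the surjection $(\delta_x^+)^{*}\widehat\Omega^1_{\cZ_{n-1}(x)/\cO}\twoheadrightarrow\widehat\Omega^1_{\cZ_n(x)\cap\cN_{n-1}/\cO}$ is between locally free sheaves of equal rank, hence an isomorphism, forcing $\delta_x^+$ to be formally \'etale and therefore an isomorphism.

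The main obstacle is the hermitian computation in step~2: one must track both cases of Proposition~\ref{surj of delta} through the explicit lattices and exploit the fact that $\delta\in O_{\breve F_0}^{\times}$ is a unit while $|c|^2\delta$ is the critical contribution to $h(x,x)$. The condition $h(x,x)\in\pi_0 O_{F_0}$ is precisely what forces $c\in\pi O_{\breve F}$, which is exactly the divisibility needed to move the vector $(x,0)$ across the colength-one inclusion $\dM\subset\dL\obot O_{\breve F}e$.
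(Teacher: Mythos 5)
Your steps (1) and (2) match the paper's proof. In step (2) the key computation is the same: expand the image of $e$ in the normal basis adapted to the chain $\dM\subset^1\dL\obot O_{\breve F}e$, observe that the ``boundary'' coefficient $c$ (the paper's $a$) contributes $c\bar c\,\delta$ to the hermitian norm while the remaining coefficients contribute something in $\pi^2 O_{\breve F_0}$, and use $h(x,x)\in\pi_0 O_{F_0}$ together with $\delta\in O_{\breve F_0}^\times$ to force $c\in\pi O_{\breve F}$, hence the image lies in $\dM$.

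Step (3), however, departs from the paper and has a genuine gap. You propose to establish formal smoothness of \emph{both} $\cZ_{n-1}(x)$ and $\cZ_n(x)\cap\cN_{n-1}$ of relative dimension $n-3$ and then run the K\"ahler differential argument from \S\ref{proof of theorem 5.5}. But neither smoothness claim follows from the results you cite. Lemma~\ref{tangent space of special divisor} computes the tangent space of a \emph{single} $\cZ$-cycle on $\cN_n$ and only under the hypothesis $z\notin\cZ_n(\pi^{-1}x)$, which is not guaranteed here since $h(x,x)\in\pi_0 O_{F_0}$ permits deeper divisibility; it does not address the tangent space of the \emph{intersection} $\cZ_n(x)\cap\cZ_n(u)^+$, which would additionally require showing that the two linear conditions on the $(n-1)$-dimensional tangent space of $\cN_n$ are independent. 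More seriously, the smoothness of $\cZ_{n-1}(x)$ as a divisor on the odd-dimensional $\cN_{n-1}$ is nowhere established prior to this proposition — indeed, the paper's own remark at the end of \S1.3 treats this as a \emph{consequence} of the identification propositions, so assuming it here would be circular. The paper sidesteps all of this by proving formal smoothness of the morphism $\delta_x^+$ directly via Grothendieck--Messing: given a square-zero thickening $R\to S$, it shows that $\dD(x)_R$ automatically preserves the Hodge filtration, using the rank-one distinguished submodule $L_{\widetilde X}\subset\Lie(\widetilde X)$ from \cite{LL22}*{Lemma~2.38} and an explicit Dieudonn\'e-module contradiction argument in the two cases of Proposition~\ref{surj of delta}. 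This requires no smoothness of the source or target.

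To repair your step (3) you could either adopt the paper's Grothendieck--Messing lifting argument, or separately prove (by a direct tangent-space calculation that does not rely on Lemma~\ref{tangent space of special divisor} as stated) that $\cZ_{n-1}(x)$ and $\cZ_n(x)\cap\cN_{n-1}$ are both formally smooth of relative dimension $n-3$; the latter amounts to essentially redoing the deformation-theoretic content of the paper's argument anyway.
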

\begin{proof}
	Write $n=2m$. From the diagram \eqref{diagram} we see that $\delta_x^+$ is \textit{a priori} a closed embedding, so again we need to show it is surjective and formally smooth.
	
	Follow Lemma \ref{D module of ubE}, we write $O_{\breve{F}}e$ for the Dieudonn\'e module of $\ubE$. From Dieudonn\'e theory the $\bar{k}$-points are described as follows$\colon$
	\begin{align*}
	\mathcal{Z}_n(x)\cap\mathcal{N}_{n-1}(\bar{k})&=\left\{
    \begin{array}{c}
	    \text{ almost }\pi\text{-modular }\dL\subset\dV_{n-1} \text{ satisfying condition }\eqref{VModd},\\
        \pi\text{-modular }\dM\subset\dV_n \text{ satisfying condition }\eqref{VM}\text{ and }\eqref{L sub M},\\
        \text{ and there exists a morphism }O_{\breve{F}}e\to \dM\text{ induced by }x
	\end{array} \right\},\\
    \mathcal{Z}_{n-1}(x)(\bar{k})&=\left\{
    \begin{array}{c}
        \text{ almost }\pi\text{-modular }\dL\subset\dV_{n-1}\text{ satisfying condition }\eqref{VModd},   \\
        \text{ and there exists a morphism }O_{\breve{F}}e\to\dL\text{ induced by }x
    \end{array}\right\}.
    \end{align*}
    
	Then the morphism $\delta_x^{+}$ just sends the pair $(\dL,\dM)$ to $\dL$ with the Dieudonn\'e morphism
	$$O_{\breve{F}}e\stackrel{x}{\lrarr} \dM\subset^1 \dL\obot O_{\breve{F}}e\stackrel{p_1}{\lrarr}\dL.$$
	
	To show surjectivity, take $\dL$ with the Dieudonn\'e morphism $x$, then the morphism $\delta^+$ provides us with the lattice $\dM$ satisfying conditions \eqref{VM} and \eqref{L sub M}. All we need to show is that the morphism $O_{\breve{F}}e\stackrel{(x,0)}{\lrarr} \dL\obot O_{\breve{F}}e$ factors through $\dM\subset^1 \dL\obot O_{\breve{F}}e$.
	
	As in \eqref{L in case 1} or \eqref{L in case 2}, we can find a standard normal basis $\sigma_i,\tau_i$ of $\dM$ such that $\pi e=\sigma_1+\frac{\pi\delta}{2}\tau_1$ and $\dL$ has normal basis $\pi^{-1}(\sigma_1-\frac{\pi\delta}{2}\tau_1),\sigma_i,\tau_i$ for $i\geq 2$. Let the image of $O_{\breve{F}}e\stackrel{x}{\lrarr}\dL$ be $a\pi^{-1}(\sigma_1-\frac{\pi\delta}{2}\tau_1)+a_2\sigma_2+b_2\tau_2+\dots+a_m \sigma_m+b_m\tau_m$, then its hermitian norm is
	$$\delta a\bar{a}-\sum_{i=2}^{m}(a_i\bar{b}_i-\bar{a}_ib_i)\pi.$$
	On the other hand, by \eqref{relation of herm forms} the hermitian norm should be $\delta\cdot h(x,x)\in\pi^2 O_{\breve{F}_0}$ as we assume $h(x,x)\in \pi_0 O_{F_0}$, so $a\in\pi O_{\breve{F}}$ and the image lies in $\dM$ as desired.
	
	To show it is formally smooth, we consider a Noetherian local $O_{\breve{F}}$-algebra $R$ on which $\pi$ is nilpotent, and a square zero ideal $I\subset R$. Let $S=R/I$ so that $R\to S$ is a thickening, equipped with the trivial nilpotent divided power structure on $I$. Suppose we have a commutative diagram consisting of solid arrows
	$$\xymatrix{
		\Spec S\ar[r]\ar[d] & \cZ_n(x)\cap\cN_{n-1}\ar[d] \\
		\Spec R\ar[r] \ar@{-->}[ur]& \cZ_{n-1}(x)
	}$$
	and we want to find a dotted arrow making the diagram commutative. In other words, we are given
	$(X',\iota',\lambda',\rho')\in \cN_{n-1}(S)$ such that if we write $\delta^+(X',\iota',\lambda',\rho')=(X,\iota,\lambda,\rho)$ then there exists a lift $x\colon\ucE_S\to X$ of $\ubE_{\ovS}\stackrel{x}{\lrarr}\mathbb{X}_{n,\ovS}\stackrel{\rho^{-1}}{\lrarr}X_{\ovS}$, together with a lift $(\widetilde{X'},\widetilde{\iota'},\widetilde{\lambda'},\widetilde{\rho'})\in\cZ_{n-1}(x)(R)$ of $(X',\iota',\lambda',\rho')$.
	
	Let $\delta^+(\widetilde{X'},\widetilde{\iota'},\widetilde{\lambda'},\widetilde{\rho'})=(\widetilde{X},\widetilde{\iota},\widetilde{\lambda},\widetilde{\rho})$. Then it suffices to show that the morphism $\ucE_S\to X$ lifts to $\ucE_R\to\widetilde{X}$. Let $\phi\colon X\to X'\times\ucE_S$ be the $O_F$-isogeny lifting $\phi_0$. Over $S$ we have a commutative diagram from Dieudonn\'e crystals$\colon$
	$$\xymatrix{
		\operatorname{Fil}^1(\ucE_S)\ar[d]\ar[r] & \mathbb{D}(\ucE_S)\ar[d]^-{\mathbb{D}(x)}\\
		\operatorname{Fil}^1(X)\ar[r]\ar[d] & \mathbb{D}(X)\ar[d]^-{\mathbb{D}(\phi)}\\
		\operatorname{Fil}^1(X')\obot\operatorname{Fil}^1(\ucE_S)\ar[r] & \mathbb{D}(X')\oplus\mathbb{D}(\ucE_S)
	}$$
	and it can be extended to a commutative diagram over $R\colon$
	$$\xymatrix{
		\operatorname{Fil}^1(\ucE_R)\ar[r]\ar@/_24pt/[dd] & \mathbb{D}(\ucE_R)\ar[d]^-{\mathbb{D}(x)_R}\\
		\operatorname{Fil}^1(\widetilde{X})\ar[r]\ar[d] & \mathbb{D}(X)_R\ar[d]^-{\mathbb{D}(\phi)_R}\ar[r] & \operatorname{Lie}(\widetilde{X})\ar[d]^-{\operatorname{Lie}(\phi)_R}\\
		\operatorname{Fil}^1(\widetilde{X'})\oplus\operatorname{Fil}^1(\ucE_R)\ar[r] & \mathbb{D}(X')_R\oplus\mathbb{D}(\ucE_R)\ar[r] & \operatorname{Lie}(\widetilde{X'})\oplus\operatorname{Lie}(\ucE_R)
	}$$
	By Grothendieck--Messing theory, it suffices to show $\mathbb{D}(x)_R$ preserves the Hodge filtration, i.e.
	$$\mathbb{D}(x)_R(\operatorname{Fil}^1(\ucE_R))\subset \operatorname{Fil}^1(\widetilde{X}).$$
	
	From \cite{LL22}*{Lemma 2.38} we know that there is a locally direct summand $R$-submodule of $\operatorname{Lie}(\widetilde{X})$ of rank $1$, denoted by $L_{\widetilde{X}}$, and by the same argument as \cite{How19}*{Proposition 4.1} we see that the image of the induced morphism
	$$\mathbb{D}(x)_R\colon \operatorname{Fil}^1(\ucE_R)\lrarr\operatorname{Lie}(\widetilde{X})$$
	lies in $L_{\widetilde{X}}$. Suppose $f$ is an $R$-basis of $\operatorname{Fil}^1(\ucE_R)$ and $e'$ is an $R$-basis of $L_{\widetilde{X}}$. Write $\mathbb{D}(x)_R(f)=r\cdot e'$ for some $r\in R$. Pick any $R$-basis $e_1,\dots,e_n$ of $\operatorname{Lie}(\widetilde{X'})\oplus\operatorname{Lie}(\ucE_R)$, and suppose the image of $e'$ under $\operatorname{Lie}(\phi)_R$ in $\operatorname{Lie}(\widetilde{X'})\oplus\operatorname{Lie}(\ucE_R)$ is given by
	$a_1 e_1+\dots +a_n e_n$ for $a_1,\dots,a_n\in R$. Then as $(\widetilde{X'},\widetilde{\iota'},\widetilde{\lambda'},\widetilde{\rho'})\in\cZ_{n-1}(x)(R)$, the image of $\operatorname{Fil}^1(\ucE_R)$ in $\operatorname{Lie}(\widetilde{X'})\oplus\operatorname{Lie}(\ucE_R)$ vanishes. Thus $r\cdot a_1=\dots=r\cdot a_n=0$.
	
	If one of $a_1,\dots,a_n$ is a unit, then we have $r=0$ so that $\mathbb{D}(x)_R$ preserves the Hodge filtration and we are done. Otherwise, let $\mathfrak{m}$ be the maximal ideal of $R$, and assume $a_1,\dots,a_n\in \mathfrak{m}$. We shall prove by contradiction that this is impossible.
	
	We make the base change to the residue field $\bar{k}$, then the image of $L_{\bar{k}}\coloneqq(L_{\widetilde{X}})_{\bar{k}}$ in $\operatorname{Lie}(\widetilde{X'})_{\bar{k}}\oplus\operatorname{Lie}(\ucE_R)_{\bar{k}}$ vanishes as we assume $a_1,\dots,a_n\in\fm$.
	
	For any standard normal basis $e_1,f_1,\dots,e_m,f_m$ of the Dieudonn\'e module $\dM$ of $X_{\bar{k}}$ such that $e_1\notin V\dM$, by Lemma \ref{describe VM} we have
	\begin{itemize}
		\item [(1)] $\operatorname{Fil}^1(X_{\bar{k}})=\langle e_1,\pi e_1,\pi e_2,\pi f_2,\dots,\pi e_m,\pi f_m\rangle_{\bar{k}}$,
		\item [(2)] $\operatorname{Lie}(X_{\bar{k}})=\langle f_1,\pi f_1, e_2, f_2,\dots, e_m,f_m\rangle_{\bar{k}}$.
	\end{itemize}
	By \cite{LL22}*{Lemma 2.42}, we see that $L_{\bar{k}}=\langle \pi f_1\rangle_{\bar{k}}$. 
 
    Now let $\dL$ be the Dieudonn\'e module of $X'_{\bar{k}}\times \ubE_{\bar{k}}$. Recall we have the relation \eqref{L sub M}
	$$N\subset^1 \dM\subset^1 \dL$$
	where $N=\pi \dL^*$. Then there are two cases, corresponding to the two cases in the proof of Proposition \ref{surj of delta}. In particular as in the proof, we can find a standard normal basis $\sigma_1,\tau_1,\dots,\sigma_m,\tau_m$ of $\dM$ with $\sigma_1\notin V\dM$ (thus $L_{\bar{k}}=\langle \pi\tau_1\rangle_{\bar{k}}$) and such that
    \begin{itemize}
        \item In the first case
    $$N=\langle \sigma_1, \tau_1,\dots, \sigma_{m-1}, \tau_{m-1}, \sigma_m, \pi \tau_m\rangle_{O_{\breve{F}}}\text{ and }\dL=\langle \sigma_1, \tau_1,\dots, \sigma_{m-1}, \tau_{m-1}, \pi^{-1}\sigma_m,\tau_m\rangle_{O_{\breve{F}}}.$$
    Then the inclusion $\dM\subset^1 \dL$ sends $\sigma_m$ to $\pi\cdot\pi^{-1}\sigma_m$, $\pi \sigma_m$ to $\pi_0 \cdot \pi^{-1}\sigma_m$ and all other bases to themselves. In particular the image of $\pi \tau_1$ is again $\pi \tau_1$, which does not vanish after base change to $\bar{k}$ (the only term that vanishes is $\pi \sigma_m$);
    \item In the second case
    $$N=\langle \sigma_1, \pi \tau_1, \sigma_2, \tau_2,\dots, \sigma_m, \tau_m\rangle_{O_{\breve{F}}} \text{ and } \dL=\langle \pi^{-1} \sigma_1, \tau_1, \sigma_2, \tau_2, \dots,\sigma_m,\tau_m\rangle_{O_{\breve{F}}}.$$
    Then the inclusion $\dM\subset^1 \dL$ sends $\sigma_1$ to $ \pi\cdot\pi^{-1}\sigma_1$ and $\pi \sigma_1$ to $\pi_0\cdot \pi^{-1 }\sigma_1$, and all other bases to themselves. In particular $\pi \tau_1$ maps to $\pi \tau_1$, which will not vanish after base change to $\bar{k}$ (the only term that vanishes is $\pi \sigma_1$). 
    \end{itemize}

    Thus it is a contradiction that the image of $L_{\bar{k}}$ will vanish. 
\end{proof}

\textbf{Pullback $\cY$-cycles}$\colon$ For a $\Spf O_{\breve{F}}$-scheme $S$, we have
$$\begin{aligned}
\cY_{n-1}(x)(S)&=\left\{\begin{array}{c}(X',\iota',\lambda',\rho')\in\cN_{n-1}(S) \text{ such that there exists a morphism }\\ \ucE_S\to (X')^{\vee}\text{ lifting }\ubE_{\ovS}\stackrel{x}{\lrarr}\dX_{n-1,\ovS}\stackrel{(\rho')^{-1}}{\lrarr} X'_{\ovS}\stackrel{\lambda'}{\lrarr}(X')^{\vee}_{\ovS}.\end{array}\right\},\\
\cN_{n-1}\cap\cY_n(x)(S)&=\left\{\begin{array}{c}(X',\iota',\lambda',\rho')\in\cN_{n-1}(S)\text{ such that if its image in }\cN_n^+(S)\\ \text{ is }(X,\iota,\lambda,\rho)\text{ then there exists a morphism }\ucE_S\to X^{\vee}\text{ lifting }\\\ubE_{\ovS}\stackrel{x}{\lrarr}\dX_{n,\ovS}\stackrel{\rho^{-1}}{\lrarr}X_{\ovS}\stackrel{\lambda}{\lrarr}X^{\vee}_{\ovS}.\end{array}\right\}.
\end{aligned}$$
Then we have a morphism denoted by $\gamma_x^+\colon$
$$\begin{aligned}
\cY_{n-1}(x)&\lrarr\cN_{n-1}\cap\cY_n(x)\\
((X',\iota',\lambda',\rho'),\widetilde{x}\colon\ucE_S\to(X')^{\vee})&\longmapsto((X',\iota',\lambda',\rho'),\ucE_S\stackrel{(\widetilde{x},0)}{\lrarr}(X')^{\vee}\times\ucE_S^{\vee}\stackrel{\phi^{\vee}}{\lrarr}X^{\vee})
\end{aligned}$$
where $\phi\colon X\to X'\times\ucE_{S}$ is the $O_F$-linear isogney lifting $\phi_0$. The morphism is well-defined because we have the following commutative diagram over $\ovS$
$$\xymatrix{
	\ubE_{\ovS} \ar[rr]^-{(x,0)}\ar[drr]^-{x}
    &&\dX_{n-1,\ovS}\times\ubE_{\ovS} \ar[rr]^-{(\rho')^{-1}\times \operatorname{id}}
    &&X'_{\ovS}\times\ubE_{\ovS}\ar[rr]^-{\lambda'\times\lambda_{\ubE}}
    &&(X'_{\ovS})^{\vee}\times\ubE_{\ovS}^{\vee}\ar[d]^-{\phi_{\ovS}^{\vee}}\\
	&&\dX_{n,\ovS}\ar[u]_-{\phi_0}\ar[rr]^-{\rho^{-1}}&&X_{\ovS}\ar[rr]^-{\lambda}\ar[u]^-{\phi_{\ovS}}
	&&X_{\ovS}^{\vee}.}$$

\begin{proposition}\label{identification of Y-cycles}
	The morphism $\gamma_x^+$ is an isomorphism for any nonzero $x\in\dV_{n-1}$.
\end{proposition}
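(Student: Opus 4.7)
My plan is to follow the strategy used for Proposition \ref{identify special divisors}. First I would observe that $\gamma_x^+$ is automatically a closed embedding: a lift $\tilde x\colon\ucE_S\to(X')^\vee$ of $\lambda'\circ(\rho')^{-1}\circ x$ is uniquely determined when it exists, and by the commutative diagram defining $\gamma_x^+$ the composite $\phi^\vee\circ(\tilde x,0)$ is then automatically a lift of $\lambda\circ\rho^{-1}\circ x$. Hence $\cY_{n-1}(x)\subseteq\cN_{n-1}\cap\cY_n(x)$ as closed subfunctors of $\cN_{n-1}$, so what remains is to verify surjectivity on $\bar k$-points and formal smoothness.

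For surjectivity on $\bar k$-points, I would translate the conditions into Dieudonn\'e theory. Dualizing \eqref{L sub M} and using that the rank-one lattice $O_{\breve F}e$ is self-dual (since $h(e,e)\in O_{\breve F_0}^\times$) yields
\[
\dL^*\obot O_{\breve F}e\ \subset^1\ \dM^*\ \subset^1\ \pi^{-1}\dL\obot\pi^{-1}O_{\breve F}e.
\]
Under the decomposition $\dN_n=\dN_{n-1}\obot\overline{\dN}_1$ induced by $\phi_0$, the $\cN_{n-1}\cap\cY_n(x)$-condition on $\bar k$-points reads $(x(e),0)\in\dM^*$, while the $\cY_{n-1}(x)$-condition reads $x(e)\in\dL^*$. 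Invoking the explicit standard normal bases of $\dM$ constructed in Cases 1 and 2 of the proof of Proposition \ref{surj of delta}, a direct computation exhibits a generator $\zeta$ of the cyclic quotient $\dM^*/(\dL^*\obot O_{\breve F}e)$ with nontrivial $\overline{\dN}_1$-component. Writing $(x(e),0)=\ell+\alpha\zeta$ with $\ell\in\dL^*\obot O_{\breve F}e$ and $\alpha\in O_{\breve F}$, the vanishing of the $\overline{\dN}_1$-component of $(x(e),0)$ will force $\alpha\in\pi O_{\breve F}$, and one then reads off $x(e)\in\dL^*$ from the $\dN_{n-1}$-component.

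For formal smoothness I would adapt the Grothendieck--Messing argument of the proof of Proposition \ref{identify special divisors} to the dual side. Given a square-zero thickening $R\to S$ and the standard commutative diagram of lifts, Grothendieck--Messing theory reduces the obstruction to showing that the crystal map $\mathbb{D}(\tilde x)_R\colon\Fil^1(\ucE_R)\to\mathbb{D}((\widetilde{X'})^\vee)_R$ preserves the Hodge filtration of $(\widetilde{X'})^\vee$. An analog of \cite{LL22}*{Lemma 2.38} on the dual side should produce a rank-one locally direct summand $L'\subset\Lie(\widetilde{X'})^\vee$ in which the image of $\mathbb{D}(\tilde x)_R$ is forced to lie, reducing the question to a single nonvanishing statement of a coefficient modulo the maximal ideal; this is verified by direct case-by-case computation on the bases from Cases 1 and 2 of Proposition \ref{surj of delta}, reaching the analog of the contradiction in Proposition \ref{identify special divisors}.

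The main obstacle I expect will be the formal smoothness step: because the integral Dieudonn\'e map $\mathbb{D}(\phi^\vee)$ has nontrivial kernel, filtered-ness on the $X^\vee$-side cannot be transferred directly to the $(X')^\vee$-side, and so one must carry out the explicit dual analysis of $\dL^*\obot O_{\breve F}e\subset^1\dM^*$ in each of the two cases from Proposition \ref{surj of delta} to extract the required contradiction.
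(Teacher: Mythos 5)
Your high-level structure (closed embedding, then surjectivity on $\bar k$-points, then formal smoothness) matches the paper, but both main steps diverge from the paper's route, and the formal-smoothness step as proposed has a genuine gap.

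For surjectivity, the paper's argument is a one-line orthogonality observation and needs no case analysis. Since $x\in\dV_{n-1}$ and $u=\pi f$ with $f\perp\dV_{n-1}$, one has $h(x,u)=0$, hence by \eqref{relation of herm forms} $h(x(e),u(e))=\delta\,h(x,u)=0$, i.e.\ $h(x(e),e)=0$. But by the construction \eqref{define L out of M}, $\dL^*$ is precisely the orthogonal complement $\{y\in\dM^*\mid h(y,e)=0\}$, so $x(e)\in\dM^*$ plus this orthogonality gives $x(e)\in\dL^*$ immediately. Your alternative — find a generator $\zeta$ of $\dM^*/(\dL^*\obot O_{\breve F}e)$ with $\overline{\dN}_1$-component of valuation $-1$ and then read off $\alpha\in\pi O_{\breve F}$ from the vanishing of the $\overline{\dN}_1$-component of $(x(e),0)$ — can be made to work (the $\overline{\dN}_1$-component of the generator is indeed a unit times $\pi^{-1}e$ in both cases, and $\pi\dM^*=\dM\subset\dL\obot O_{\breve F}e\subset\dL^*\obot O_{\breve F}e$ gives the final step), but it re-introduces the case distinction from Proposition \ref{surj of delta} where none is needed; the orthogonality argument is both shorter and conceptually what is really going on.

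For formal smoothness, the gap is in the proposed use of a ``dual-side analog of \cite{LL22}*{Lemma 2.38}.'' That lemma is a statement about $\cZ$-cycles on the \emph{even}-dimensional exotic smooth model; what you would need is a rank-one locally-direct-summand statement for $\cY$-cycles on the \emph{odd}-dimensional $\cN_{n-1}$, which is not available at this point in the paper (indeed, establishing $\cY_{n-1}(x)$ to be a divisor is part of what this very proposition is being used to prove, via Proposition \ref{geometric reduction}), so the argument would be circular or at best unproven. Fortunately no such lemma is needed. The correct observation, which the paper uses, is that in both cases of Proposition \ref{surj of delta} the map $\Lie(\widetilde\phi^\vee)$ \emph{restricted to the factor} $\Lie((\widetilde{X'})^\vee)$ is injective as a map of free $R$-modules: explicitly it sends the $R$-basis $\widetilde\tau_1,\pi\widetilde\tau_1,\widetilde\sigma_2,\ldots,\widetilde\tau_{m-1},\widetilde\varphi$ (Case 1) or $\widetilde\varphi,\widetilde\sigma_2,\ldots,\widetilde\tau_m$ (Case 2) to $R$-linearly independent vectors in $\Lie(\widetilde X^\vee)$. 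Once this is checked, the vanishing of the image of $f'=\dD(x)_R(\Fil^1(\ucE_R))$ under $\Lie(\widetilde\phi^\vee)$ (which holds because $\widetilde X\in\cY_n(x)(R)$) forces $f'=0$ outright — no reduction to a single coefficient and no nonvanishing-mod-$\fm$ contradiction is required. Your diagnosis that ``$\dD(\phi^\vee)$ has nontrivial kernel'' is also slightly off: the crystal map is injective (it is an isogeny), the kernel appears only on Lie algebras, and even there it is supported on the $\ucE$-factor (it is spanned by $\varphi\mp e$), which is exactly why the restriction to the $(X')^\vee$-factor stays injective.
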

\begin{proof}
	The morphism $\gamma_x^+$ completes the closed embedding diagram
	$$\xymatrix{\cY_{n-1}(x)\ar[rr]^-{\gamma_x^+}\ar[dr]&&\cN_{n-1}\cap\cY_n(x)\ar[ld]\\&\cN_{n-1}}$$
	so it is also a closed embedding. It remains to show it is surjective and formally smooth.
	
	We firstly show it is surjective. We shall consider the $\bar{k}$-points and the argument for arbitrary algebraically closed field is the same. Let $O_{\breve{F}}e$ denote the Dieudonn\'e module of $\ubE$ as in Lemma \ref{D module of ubE}. We have the following description in terms of Dieudonn\'e module$\colon$
	$$\begin{aligned}
	\cN_{n-1}\cap\cY_n(x)(\bar{k})&=\left\{\begin{array}{c}
	\text{ almost }\pi\text{-modular }\dL\subset\dN_{n-1}\text{ satisfying condition }\eqref{VModd},\\
	\pi\text{-modular }\dM\subset\dN_n\text{ satisfying condition }\eqref{VM}\text{ and }\eqref{L sub M},\\
    \text{ and there exists a morphism }O_{\breve{F}}e\to \dM^*\text{ induced by }x.
	\end{array}\right\},\\
	\cY_{n-1}(x)(\bar{k})&=\left\{\begin{array}{c}
	\text{ almost }\pi\text{-modular }\dL\subset\dN_{n-1}\text{ satisfying condition }\eqref{VModd},\\
	\text{and there exists a morphism }O_{\breve{F}}e\to \dL^*\text{ induced by }x.
	\end{array}\right\}.
	\end{aligned}$$
	The morphism $\gamma_x^+$ just sends $(\dL,x\colon O_{\breve{F}}e\to \dL^*)$ to $(\dL,\dM,O_{\breve{F}}e\stackrel{(x,0)}{\to}\dL^*\obot O_{\breve{F}}e \subset^1\pi^{-1}\dM=\dM^*)$ where $\dM=\delta^+(\dL)$. To show it is surjective, it suffices to show given a morphism $O_{\breve{F}}e\to \dM^*$ induced by $x$, it actually factors through $O_{\breve{F}}e\to \dL^*$.
	
	Recall any $x\in\dV_n$ induces a morphism of rational Dieudonn\'e modules $x\colon\breve{F}e\to\dN_n$, and we have the following relation from \eqref{relation of herm forms}
	$$h(xe,ye)=h(e,e)h(x,y)=\delta h(x,y).$$
	
	Consider the inclusion $\dL^*\obot O_{\breve{F}} e \subset^1 \dM^*$. The element $u\in\dV_n$ induces a morphism $\breve{F}e\to\dN_n$ sending $e$ to $u(e)=\pi e$. As we are taking $x\in\dV_{n-1}$, we have $h(x,u)=0$. This implies $h(x(e),u(e))=0$ so $x(e)$ lies in the orthogonal complement of $e\in \dM^*$, which is just $\dL^*$. Thus the morphism $x\colon O_{\breve{F}}e\to \dM^*$ naturally factors through $\dL^*$.
	
	Next we show the morphism $\gamma_x^+$ is formally smooth. Consider a Noetherian local $O_{\breve{F}}$-algebra $R$ on which $\pi$ is nilpotent, and a square zero ideal $I\subset R$. Let $S=R/I$ so that $R\to S$ is a thickening, equipped with the trivial nilpotent divided power structure on $I$. Suppose we have a commutative diagram consisting of solid arrows
	$$\xymatrix{
		\Spec S\ar[r]\ar[d]&\cY_{n-1}(x)\ar[d]\\
		\Spec R\ar[r]\ar@{-->}[ur]&\cN_{n-1}\cap\cY_n(x)
	}$$
	and we want to find a dotted arrow making the diagram commutative.
	
	Equivalently, we have $(X',\iota',\lambda',\rho')\in\cY_{n-1}(x)(S)$ and a lift $(\widetilde{X'},\widetilde{\iota'},\widetilde{\lambda'},\widetilde{\rho'})\in\cN_{n-1}(R)$ such that under $\delta^+$ its image $(\widetilde{X},\widetilde{\iota},\widetilde{\lambda},\widetilde{\rho})\in\cY_n(x)(R)$. We want to show that $(\widetilde{X'},\widetilde{\iota'},\widetilde{\lambda'},\widetilde{\rho'})\in\cY_{n-1}(x)(R)$.
	
	Write $\delta^+(X',\iota',\lambda',\rho')=(X,\iota,\lambda,\rho)$. Let $\phi\colon X\to X'\times \ucE_S$ and $\widetilde{\phi}\colon\widetilde{X}\to\widetilde{X'}\times\ucE_R$ be the $O_F$-linear isogenies lifting $\phi_0$. Over $S$ we have a commutative diagram from Dieudonn\'e crystals$\colon$
	$$\xymatrix{
		\operatorname{Fil}^1(\ucE_S)\ar[r]\ar[d]&\dD(\ucE_S)\ar[d]^-{\dD(x)\oplus 0}\\
		\operatorname{Fil}^1((X')^{\vee})\oplus\operatorname{Fil}^1(\ucE_S)\ar[r]\ar[d]&\dD((X')^{\vee})\oplus\dD(\ucE_S)\ar[d]^-{\dD(\phi^{\vee})}\\
		\operatorname{Fil}^1(X^{\vee})\ar[r]&\dD(X^{\vee})
	}$$
	and it can be extended to a commutative diagram over $R\colon$
	$$\xymatrix{
		\operatorname{Fil}^1(\ucE_R)\ar[r]\ar@/_60pt/[dd]&\dD(\ucE_R)\ar[d]^-{\dD(x)_R\oplus 0}\\
		\operatorname{Fil}^1(\widetilde{X'}^{\vee})\oplus\operatorname{Fil}^1(\ucE_R)\ar[d]\ar[r]&\dD((X')^{\vee})_R\oplus\dD(\ucE_R)\ar[r]\ar[d]^-{\dD(\widetilde{\phi}^{\vee})}&\operatorname{Lie}(\widetilde{X'}^{\vee})\oplus\operatorname{Lie}(\ucE_R)\ar[d]^-{\operatorname{Lie}(\widetilde{\phi}^{\vee})}\\
		\operatorname{Fil}^1(\widetilde{X}^{\vee})\ar[r]&\dD({X}^{\vee})_R\ar[r]&\operatorname{Lie}(\widetilde{X}^{\vee}).
	}$$
	Suppose $f$ is an $R$-basis of $\operatorname{Fil}^1(\ucE_R)$ and consider its image $f'$ under $\dD(x)_R$ in $\operatorname{Lie}(\widetilde{X'}^{\vee})$. By Grothendieck--Messing theory, it suffices to show the image is zero, as then we will have a lift of the morphism $\ucE_S\to (X')^{\vee}$ over $S$ to $\ucE_R\to\widetilde{X'}^{\vee}$ over $R$.
	
	
	Consider the reduction to residue field. Then there are two cases corresponding to the two cases in the proof of Proposition \ref{surj of delta}$\colon$
	\item [\textbf{Case 1}$\colon$] As in the proof, we can find a standard normal basis $\sigma_1,\tau_1,\dots,\sigma_m,\tau_m$ of the Dieudonn\'e module $M^*$ of $X^{\vee}_{\bar{k}}$ with $\sigma_1\notin VM^*$ such that element $e\in M^*$ is given by $\sigma_m+\frac{\pi\delta}{2}\tau_m$. Let $\varphi=\sigma_m-\frac{\pi\delta}{2}\tau_m$. By Lemma \ref{describe VM} we have
    $$\operatorname{Lie}(X^{\vee}_{\bar{k}})=\langle \tau_1,\pi\tau_1,\sigma_2,\tau_2,\dots,\sigma_m,\tau_m\rangle_{\bar{k}}.$$
    Denote the Dieudonn\'e module of $(X')^{\vee}_{\bar{k}}$ by $L^*$. Then by \eqref{define L out of M} we have
    $$L^*=\langle e\rangle^{\perp}=\langle \sigma_1,\tau_1,\dots,\sigma_{m-1},\tau_{m-1},\varphi\rangle_{O_{\breve{F}}},$$
    by \eqref{V and dual} and \eqref{VL in case 1} we have
    \begin{align*}VL^*=\pi_0(VL)^*&=\langle \sigma_1,\pi_0\tau_1,\pi\sigma_2,\dots,\pi\tau_{m-1},\pi\varphi\rangle_{O_{\breve{F}}},\\
	\operatorname{Lie}((X')^{\vee}_{\bar{k}})=L^*/VL^*&=\langle \tau_1,\pi\tau_1,\sigma_2,\dots,\tau_{m-1},\varphi\rangle_{\bar{k}}.\end{align*}
 
	The map $\dD(\phi^{\vee})_{\bar{k}}\colon \dD((X')^{\vee}_{\bar{k}})\oplus\dD(\ubE)\to\dD(X^{\vee}_{\bar{k}})$ then can be described as
	\begin{equation}\label{the map}
	\begin{aligned}
	\sigma_i&\mapsto \sigma_i, \quad\quad\quad\quad\quad\tau_i\mapsto \tau_i\text{ for }1\leq i\leq m-1,\\
	\varphi&\mapsto \sigma_m-\frac{\pi\delta}{2}\tau_m, \quad e\mapsto \sigma_m+\frac{\pi\delta}{2}\tau_m.
	\end{aligned}\end{equation}
	We may lift the $\bar{k}$-basis $\sigma_1,\tau_1,\dots,\sigma_m,\tau_m,\varphi,e$ to an $R$-basis, denoted by $\widetilde{\sigma}_1,\widetilde{\tau}_1,\dots,\widetilde{\sigma}_m,\widetilde{\tau}_m,\widetilde{\varphi},\widetilde{e}$ such that the map $\dD(\widetilde{\phi}^{\vee})$ has the same description as \eqref{the map} with every element replaced by its $R$-lift, and
    $$\Lie(\widetilde{X'}^{\vee})=\langle \widetilde{\tau}_1,\pi\widetilde{\tau}_1,\widetilde{\sigma}_2,\dots,\widetilde{\tau}_{m-1},\widetilde{\varphi}\rangle_R.$$
    Write $f'$ in terms of the basis as$$f'=a\widetilde{\tau}_1+a'\pi\widetilde{\tau}_1+a_2\widetilde{\sigma}_2+b_2\widetilde{\tau}_2+\dots+a_{m-1}\widetilde{\sigma}_{m-1}+b_{m-1}\widetilde{\tau}_{m-1}+\alpha\widetilde{\varphi}.$$
	Then under the map $\operatorname{Lie}(\widetilde{\phi}^{\vee})$, it maps to
	$$a\widetilde{\tau}_1+a'\pi\widetilde{\tau}_1+a_2\widetilde{\sigma}_2+b_2\widetilde{\tau}_2+\dots+a_{m-1}\widetilde{\sigma}_{m-1}+b_{m-1}\widetilde{\tau}_{m-1}+\alpha\widetilde{\sigma}_m-\alpha\frac{\pi\delta}{2}\widetilde{\tau}_m$$
	which should be zero as we have the morphism $\ucE_R\to\widetilde{X}^{\vee}$. This implies that $a=a'=a_2=b_2=\dots=a_{m-1}=b_{m-1}=\alpha=0$ and $f'=0$.
	
	\item [\textbf{Case 2}$\colon$] Again we can find a standard normal basis $\sigma_1,\tau_1,\dots,\sigma_m,\tau_m$ of $M^*$ with $\sigma_1\notin VM^*$ such that $e\in M^*$ is given by $\sigma_1+\frac{\pi\delta}{2}\tau_1$. Let $\varphi=\sigma_1-\frac{\pi\delta}{2}\tau_1$. Denote the Dieudonn\'e module of $(X')^{\vee}_{\bar{k}}$ by $L^*$. Then
	$$\begin{aligned}
	L^*&=\langle \varphi,\sigma_2,\tau_2,\dots,\sigma_m,\tau_m\rangle_{O_{\breve{F}}},\\
	VL^*&=\pi L^*=\langle \pi\varphi,\pi\sigma_2,\pi\tau_2,\dots,\pi\sigma_m,\pi\tau_m\rangle_{O_{\breve{F}}},\\
	\operatorname{Lie}((X')^{\vee}_{\bar{k}})&=\langle \varphi,\sigma_2,\tau_2,\dots,\sigma_m,\tau_m\rangle_{\bar{k}},\\
	\operatorname{Lie}(X^{\vee}_{\bar{k}})&=\langle \tau_1,\pi\tau_1,\sigma_2,\tau_2,\dots,\sigma_m,\tau_m\rangle_{\bar{k}}.
	\end{aligned}$$
	The map $\dD(\phi^{\vee})_{\bar{k}}\colon \dD((X')^{\vee}_{\bar{k}})\oplus\dD(\ubE)\to\dD(X^{\vee}_{\bar{k}})$ then can be described as
	\begin{equation}\label{the map in case 2}
	\begin{aligned}
	\varphi&\mapsto \sigma_1-\frac{\pi\delta}{2}\tau_1, \quad e\mapsto \sigma_1+\frac{\pi\delta}{2}\tau_1,\\
	\sigma_i&\mapsto \sigma_i, \quad\quad\quad\quad\tau_i\mapsto \tau_i\text{ for }2\leq i\leq m.
	\end{aligned}\end{equation}
	We may lift the $\bar{k}$-basis to an $R$-basis denoted by $\widetilde{\sigma}_1,\widetilde{\tau}_1,\dots,\widetilde{\sigma}_m,\widetilde{\tau}_m,\widetilde{\varphi},\widetilde{e}$ such that the map $\dD(\widetilde{\phi}^{\vee})$ has the same description as \eqref{the map in case 2} with every element replaced by its $R$-lift. Write $f'$ in terms of the basis as $$f'=\alpha\widetilde{\varphi}+a_2\widetilde{\sigma}_2+b_2\widetilde{\tau}_2+\dots+a_m\widetilde{\sigma}_m+b_m\widetilde{\tau}_m.$$
	Then under the map $\operatorname{Lie}(\widetilde{\phi}^{\vee})$, it maps to
	$$\alpha\widetilde{\sigma}_1-\alpha\frac{\pi\delta}{2}\widetilde{\tau}_1+a_2\widetilde{\sigma}_2+b_2\widetilde{\tau}_2+\dots+a_m\widetilde{\sigma}_m+b_m\widetilde{\tau}_m$$
	which should be zero as we have the morphism $\ucE_R\to\widetilde{X}^{\vee}$. This implies that $\alpha=a_2=b_2=\dots=a_m=b_m=0$ and $f'=0$.

	This finishes the proof that $\gamma_x^+$ is formally smooth and is an isomorphism.
\end{proof}

\begin{proposition}\label{geometric reduction}
    Let $L\subset\dV_{n-1}$ be an $O_F$-lattice. View $L$ as an $O_F$-lattice (of rank $n-1$) in $\dV_n$ via the hermitian embedding $\dV_{n-1}\to\dV_n$ and set $L^{\#}=L\obot\langle f\rangle_{O_F}$. Then
    $$\Int_{n-1,\cY}(L)=\frac{1}{2}\Int_{n,\cY}(L^{\#}).$$
    In particular, the intersection number $\Int_{n-1,\cY}(L)$ is independent of a choice of basis of $L$.
\end{proposition}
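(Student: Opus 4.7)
\medskip\noindent
\textbf{Proof plan.} I would first fix an $O_F$-basis $x_1,\dots,x_{n-1}$ of $L$ and extend it by $f$ to an $O_F$-basis of $L^{\#}$, so that
$$\Int_{n,\cY}(L^{\#}) = \chi\bigl(\cN_n,\, \cO_{\cY_n(x_1)}\otimes^{\bL}\cdots\otimes^{\bL}\cO_{\cY_n(x_{n-1})}\otimes^{\bL}\cO_{\cY_n(f)}\bigr).$$
The plan is to convert the last tensor factor $\cO_{\cY_n(f)}$ into the direct sum of structure sheaves of the two formally smooth components $\cZ_n(u)^+$ and $\cZ_n(u)^-$, and then descend the entire computation to $\cN_{n-1}$ along the isomorphisms $\delta^\pm$, at which point the restrictions of the remaining factors $\cY_n(x_j)$ are known to agree with the $\cY_{n-1}(x_j)$. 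The factor of $\tfrac12$ appearing in the formula will arise precisely because the two components $\cZ_n(u)^\pm$ contribute equally.

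Executing the steps: by Proposition \ref{y=pi z even}, $\cY_n(f)\simeq\cZ_n(\pi f) = \cZ_n(u)$ (recall that under $\phi_0\colon\dV_n\stackrel{\sim}{\to}\dV_{n-1}\obot\langle f\rangle_F$ the element $u$ corresponds to $\pi f$), and by Proposition \ref{even n decomp lem} the right-hand side decomposes as $\cZ_n(u) = \cZ_n(u)^+\amalg\cZ_n(u)^-$. For each $\epsilon\in\{+,-\}$, I would apply the projection formula along the closed embedding $\iota^\epsilon\colon\cZ_n(u)^\epsilon\hookrightarrow\cN_n^\epsilon$ to rewrite the $\epsilon$-contribution as
$$\chi\Bigl(\cZ_n(u)^\epsilon,\,\bigotimes_{j=1}^{n-1,\,\bL} L(\iota^\epsilon)^*\cO_{\cY_n(x_j)}\Bigr).$$
Next, I would use Theorem \ref{identify lower RZ space with special divisor} to transport this computation to $\cN_{n-1}$ via $\delta^\epsilon$, and Proposition \ref{identification of Y-cycles} to identify each $\cY_n(x_j)\cap\cN_{n-1}$ with $\cY_{n-1}(x_j)$. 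Each of the two $\epsilon$-summands then equals $\Int_{n-1,\cY}(L)$, and summing over $\epsilon$ yields $\Int_{n,\cY}(L^{\#}) = 2\,\Int_{n-1,\cY}(L)$.

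The main obstacle will be justifying the Tor-vanishing identity $L(\iota^\epsilon)^*\cO_{\cY_n(x_j)} = \cO_{\cY_n(x_j)\cap\cZ_n(u)^\epsilon}$, without which the projection formula only yields derived restrictions rather than honest structure sheaves. I would handle this by a codimension count. By Proposition \ref{y=pi z even} combined with \cite{LL22}*{Lemma 2.40}, the cycle $\cY_n(x_j)\simeq\cZ_n(\pi x_j)$ is a relative Cartier divisor on $\cN_n$; by Corollary \ref{N_n(u) is smooth}, so is $\cZ_n(u)^\epsilon$ on the regular formal scheme $\cN_n^\epsilon$. Under the identifications $\delta^\epsilon$ and $\gamma_{x_j}^\epsilon$, their scheme-theoretic intersection is the formally smooth relative divisor $\cY_{n-1}(x_j)$ on $\cN_{n-1}$, which has codimension $2$ in $\cN_n^\epsilon$, matching the expected dimension. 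Hence the two local Cartier equations form a regular sequence on the regular scheme $\cN_n^\epsilon$, forcing all higher Tor sheaves to vanish. Finally, once the equality $\Int_{n,\cY}(L^{\#}) = 2\,\Int_{n-1,\cY}(L)$ is established for one choice of basis, the asserted basis-independence of $\Int_{n-1,\cY}(L)$ follows immediately from the already established basis-independence of $\Int_{n,\cY}(L^{\#})$.
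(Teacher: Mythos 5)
Your proof is correct and follows the same reduction as the paper: decompose $\cY_n(f)\simeq\cZ_n(u)=\cZ_n(u)^+\amalg\cZ_n(u)^-$, transport each piece to $\cN_{n-1}$ via $\delta^\pm$, and identify the pulled-back $\cY$-cycles with $\cY_{n-1}(x_j)$ via Proposition~\ref{identification of Y-cycles}. The difference is that the paper's proof compresses the derived-category bookkeeping into the phrase ``the results then follow from definition,'' whereas you make the projection formula and the Tor-independence step explicit. That Tor-independence is indeed the only nontrivial point in passing from the isomorphisms of formal schemes to an equality of Euler characteristics, and your instinct to reduce it to a regular-sequence/codimension argument on the regular scheme $\cN_n^\epsilon$ is the right one.

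Two small inaccuracies in your Tor-independence paragraph are worth flagging. First, $\cY_{n-1}(x_j)$ is \emph{not} formally smooth in general (that is only known for $\cZ_n(u)^\epsilon$, cf.\ Corollary~\ref{N_n(u) is smooth}); fortunately formal smoothness of the intersection plays no role in your argument, only its codimension. Second, the assertion that the intersection $\cY_n(x_j)\cap\cZ_n(u)^\epsilon$ has codimension $2$ needs one more sentence: since both $\cY_n(x_j)$ and $\cZ_n(u)^\epsilon$ are Cartier divisors on the regular $\cN_n^\epsilon$, Krull's height theorem gives codimension $\leq 2$, and it is exactly $2$ (hence the two local equations form a regular sequence, by Cohen--Macaulayness) provided $\cZ_n(u)^\epsilon\not\subset\cY_n(x_j)$, i.e.\ provided the restriction of the local equation of $\cY_n(x_j)$ to $\cN_{n-1}\simeq\cZ_n(u)^\epsilon$ is not identically zero near each point. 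Equivalently one must know $\cY_{n-1}(x_j)\subsetneq\cN_{n-1}$ (which is what makes $\cY_{n-1}(x_j)$ a genuine relative divisor, as the paper asserts in its remark on $\cY$-cycles for odd $n$ but does not spell out). This is a standard obstruction-theoretic fact for nonzero $x_j$, but since you invoke the codimension rather than deriving it, the step deserves an explicit citation or a one-line argument. Neither issue is fatal, and in fact the paper itself is silent on both; your write-up is the more careful of the two.
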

\begin{proof}
    From Lemma \ref{y=pi z even} and Theorem \ref{identify lower RZ space with special divisor} we have $$\cY_n(f)\simeq\cZ_n(\pi f)=\cZ_n(u)\text{ , }\cY_n(f)\cap\cN_n^+\simeq\cZ_n(u)^+\simeq\cN_{n-1}.$$
    From Proposition \ref{identification of Y-cycles} we have
    $$\cN_n^+\cap\cY_n(f)\cap\cY_n(x)\simeq\cY_{n-1}(x)$$
    for any nonzero $x\in\dV_{n-1}$. The same is true for $\cN_n^-$. The results then follow from definition.
\end{proof}

Consider nonzero $x\in\dV_{n-1}$ and assume $h(x,x)\in \pi_0 O_{F_0}$. By Lemma \ref{y=pi z even} and Proposition \ref{identification of Y-cycles} we have 
$$\cY_{n-1}(x)\simeq\cN_{n-1}\cap\cY_n(x)\simeq\cN_{n-1}\cap\cZ_n(\pi x).$$
By Proposition \ref{identify special divisors} we have $\cN_{n-1}\cap\cZ_n(\pi x)\simeq\cZ_{n-1}(\pi x)$. Combine the results we get

\begin{corollary}\label{y=pi z for odd}
	For any nonzero $x\in\dV_{n-1}$ such that $h(x,x)\in \pi_0 O_{F_0}$, we have identification
	$$\cY_{n-1}(x)\simeq\cZ_{n-1}(\pi x).$$
\end{corollary}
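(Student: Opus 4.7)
My plan is to obtain the isomorphism by chaining together three results already established in this section, with the hypothesis $h(x,x)\in\pi_0 O_{F_0}$ being exactly what is required to apply the third one.

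First I would invoke Proposition \ref{identification of Y-cycles} applied to the element $x\in\dV_{n-1}$, viewed inside $\dV_n$ via the hermitian embedding \eqref{embed of dV}. This gives the isomorphism $\gamma_x^+\colon\cY_{n-1}(x)\xra{\sim}\cN_{n-1}\cap\cY_n(x)$, reducing the problem to identifying $\cN_{n-1}\cap\cY_n(x)$ with $\cZ_{n-1}(\pi x)$.

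Next, since $n$ is even, Proposition \ref{y=pi z even} gives a natural isomorphism $\cY_n(x)\simeq\cZ_n(\pi x)$ as closed formal subschemes of $\cN_n$, so intersecting with $\cN_{n-1}\hookrightarrow\cN_n$ (via $\delta^+$) yields $\cN_{n-1}\cap\cY_n(x)\simeq\cN_{n-1}\cap\cZ_n(\pi x)$. Finally, I would apply Proposition \ref{identify special divisors} to $\pi x\in\dV_n$ to get $\delta^+_{\pi x}\colon \cN_{n-1}\cap\cZ_n(\pi x)\xra{\sim}\cZ_{n-1}(\pi x)$. The hypothesis needed is $h(\pi x,\pi x)\in\pi_0 O_{F_0}$; this follows from the assumption $h(x,x)\in\pi_0 O_{F_0}$ since $h(\pi x,\pi x)=\pi\bar\pi\, h(x,x)=-\pi_0 h(x,x)\in\pi_0^2 O_{F_0}\subset\pi_0 O_{F_0}$.

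There is no genuine obstacle here: all the analytic and geometric content has already been packaged into Propositions \ref{y=pi z even}, \ref{identification of Y-cycles}, and \ref{identify special divisors}, and the only thing to verify by hand is the trivial norm calculation that licenses the use of Proposition \ref{identify special divisors}. The same argument applies verbatim with $\delta^-$ and $\gamma_x^-$ in place of $\delta^+$ and $\gamma_x^+$ on the other component, so the identifications are compatible with the decomposition into $\cN_n^\pm$.
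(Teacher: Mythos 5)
Your proposal is correct and matches the paper's own argument step for step: apply Proposition~\ref{identification of Y-cycles} to get $\cY_{n-1}(x)\simeq\cN_{n-1}\cap\cY_n(x)$, use Proposition~\ref{y=pi z even} to rewrite $\cY_n(x)\simeq\cZ_n(\pi x)$, and then apply Proposition~\ref{identify special divisors} to $\pi x$. The paper leaves the norm verification $h(\pi x,\pi x)\in\pi_0 O_{F_0}$ implicit; your explicit check that $h(\pi x,\pi x)=-\pi_0\,h(x,x)\in\pi_0 O_{F_0}$ is a minor but welcome addition.
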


Explicitly the isomorphism is given by
$$(X',\iota',\lambda',\rho')\in\cN_{n-1}(S), x'\colon\ucE_S\to (X')^{\vee}\longmapsto (X',\iota',\lambda',\rho')\in\cN_{n-1}(S), \ucE_S\stackrel{x'}{\lrarr} (X')^{\vee}\stackrel{r}{\lrarr} X'$$
where $r\colon(X')^{\vee}\to X'$ is the unique morphism such that $r\circ\lambda'=\iota'(\pi)$.

\section{Proof of main Theorem}\label{proof of main theorem}
In this section we collect results in previous sections and prove Theorem \ref{main theorem}.

\begin{itemize}
    \item [--] When $n$ is even, the Theorem is proved in Corollary \ref{main theorem for even}.

    \item [--] When $n=1$, the condition \eqref{odd spin cond} is redundant and $\cN_1\simeq\Spf O_{\breve{F}}$. As the polarization is principal, $\cZ$-cycles and $\cY$-cycles are the same. Let $L=\langle \ell\rangle_{O_F}\subset\dV_1$ be a hermitian $O_F$-lattice with $\val_{\pi_0}h(\ell,\ell)=a$. Set $L^{\#}=L\obot I_1^{-1}$. Then on analytic side, by Proposition \ref{analytic reduction}, we have $\partial\Den(L)=\frac{1}{2}\partial\Den(L^{\#})$ and by \cite{LL22}*{Lemma 2.43}, $\partial\Den(L^{\#})=2(a+1)$ and so $\partial\Den(L)=a+1$. On the geometric side, by the theory of canonical lifting (\cite{Gr86}) we have $\Int_{\cY}(L)=a+1$. Hence $\Int_{\cY}(L)=\partial\Den(L)$.
    
    \item [--] Now assume $n\geq 3$ is odd. Let $L\subset\dV_n$ be any $O_F$-lattice. View it as in $\dV_{n+1}=\dV_n\obot\langle f\rangle_F$ via \eqref{embed of dV} and set $L^{\#}=L\obot\langle f\rangle_{O_F}$.
    
    On the analytic side, by Proposition \ref{analytic reduction} we have
    $$\partial\Den(L)=\frac{1}{2}\partial\Den(L^{\#}).$$

On the geometric side, by Proposition \ref{geometric reduction} we have
    $$\Int_{n,\cY}(L)=\frac{1}{2}\Int_{n+1,\cY}(L^{\#}).$$

By Corollary \ref{main theorem for even} we have $\partial\Den(L^{\#})=\Int_{n+1,\cY}(L^{\#})$. Thus we get $\partial\Den(L)=\Int_{n,\cY}(L)$. The Theorem is fully proved.
\end{itemize}

\begin{bibdiv}
\begin{biblist}

\bib{RyanChen}{article}{
   author={Chen, Ryan},
   title={Co-rank $1$ Arithmetic Siegel--Weil},
   note={\href{https://rycchen.github.io/papers/corank1_ASW.pdf}{Preliminary version},},
   year={2024}
}

\bib{Cho18}{article}{
  title={The basic locus of the unitary Shimura variety with parahoric level structure, and special cycles},
  author={Cho, Sungyoon},
  journal={arXiv preprint arXiv:1807.09997},
  year={2018}
}

\bib{Gr86}{article}{
  title={On canonical and quasi-canonical liftings},
  author={Gross, Benedict H},
  journal={Inventiones mathematicae},
  volume={84},
  pages={321--326},
  year={1986},
  publisher={Springer}
}

		\bib{HLSY}{article}{
			author={He, Qiao},
			author={Li, Chao},
			author={Shi, Yousheng},
			author={Yang, Tonghai},
			title={A proof of the {K}udla--{R}apoport conjecture for {Kr\"a}mer
				models},
			date={2023},
			journal={Inventiones mathematicae},
			volume={234},
			pages={721\ndash 817},
			url={https://doi.org/10.1007/s00222-023-01209-1},
		}

\bib{How19}{article}{
   author={Howard, Benjamin},
   title={Linear invariance of intersections on unitary Rapoport-Zink
   spaces},
   journal={Forum Math.},
   volume={31},
   date={2019},
   number={5},
   pages={1265--1281},
   issn={0933-7741},
   review={\MR{4000587}},
   doi={10.1515/forum-2019-0023},
}

		\bib{HSY3}{article}{
			author={He, Qiao},
			author={Shi, Yousheng},
			author={Yang, Tonghai},
			title={Kudla–{R}apoport conjecture for {K}rämer models},
			date={2023},
			journal={Compositio Mathematica},
			volume={159},
			number={8},
			pages={1673–1740},
		}

		\bib{KRshimuracurve}{article}{
			author={Kudla, Stephen},
			author={Rapoport, Michael},
			title={Height pairings on {S}himura curves and {$p$}-adic
				uniformization},
			date={2000},
			ISSN={0020-9910},
			journal={Invent. Math.},
			volume={142},
			number={1},
			pages={153\ndash 223},
			url={https://doi.org/10.1007/s002220000087},
			review={\MR{1784798}},
		}

		\bib{Kudla2004}{incollection}{
			author={Kudla, Stephen},
			title={Special cycles and derivatives of {E}isenstein series},
			date={2004},
			booktitle={Heegner points and {R}ankin {$L$}-series},
			series={Math. Sci. Res. Inst. Publ.},
			volume={49},
			publisher={Cambridge Univ. Press, Cambridge},
			pages={243\ndash 270},
			url={https://doi.org/10.1017/CBO9780511756375.009},
			review={\MR{2083214}},
		}
		
		\bib{Kudla97}{article}{
			author={Kudla, Stephen},
			title={Central derivatives of eisenstein series and height pairings},
			date={1997},
			journal={Annals of mathematics},
			volume={146},
			number={3},
			pages={545\ndash 646},
		}

\bib{KR11}{article}{
   author={Kudla, Stephen},
   author={Rapoport, Michael},
   title={Special cycles on unitary Shimura varieties I. Unramified local
   theory},
   journal={Invent. Math.},
   volume={184},
   date={2011},
   number={3},
   pages={629--682},
   issn={0020-9910},
   review={\MR{2800697}},
   doi={10.1007/s00222-010-0298-z},
}

\bib{KR14}{article}{
   author={Kudla, Stephen},
   author={Rapoport, Michael},
   title={Special cycles on unitary Shimura varieties II: Global theory},
   journal={J. Reine Angew. Math.},
   volume={697},
   date={2014},
   pages={91--157},
   issn={0075-4102},
   review={\MR{3281653}},
   doi={10.1515/crelle-2012-0121},
}

\bib{LiuMixed}{article}{
  author={Liu, Yifeng},
  title={Mixed arithmetic theta lifting for unitary groups},
  booktitle={Relative Trace Formulas},
  pages={329--350},
  year={2021},
  publisher={Springer}
}

\bib{LL22}{article}{
  author={Li, Chao},
  author={Liu, Yifeng},
  title={Chow groups and L-derivatives of automorphic motives for unitary groups, II.},
  journal={Forum of Mathematics, Pi},
  volume={10},
  pages={e5},
  year={2022},
  organization={Cambridge University Press}
}

\bib{LZ}{article}{
  title={Kudla--Rapoport cycles and derivatives of local densities},
  author={Li, Chao},
  author={Zhang, Wei},
  journal={Journal of the American Mathematical Society},
  volume={35},
  number={3},
  pages={705--797},
  year={2022}
}

\bib{Pap00}{article}{
  title={On the arithmetic moduli schemes of PEL Shimura varieties},
  author={Pappas, Georgios},
  journal={Journal of Algebraic Geometry},
  volume={9},
  number={3},
  pages={577},
  year={2000},
  publisher={Providence, RI: University Press, c1992-}
}

\bib{PR09}{article}{
  title={Local models in the ramified case. III Unitary groups},
  author={Pappas, Georgios},
  author={Rapoport, Michael},
  journal={Journal of the Institute of Mathematics of Jussieu},
  volume={8},
  number={3},
  pages={507--564},
  year={2009},
  publisher={Cambridge University Press}
}

\bib{RSZ17}{article}{
   author={Rapoport, M.},
   author={Smithling, B.},
   author={Zhang, W.},
   title={On the arithmetic transfer conjecture for exotic smooth formal
   moduli spaces},
   journal={Duke Math. J.},
   volume={166},
   date={2017},
   number={12},
   pages={2183--2336},
   issn={0012-7094},
   review={\MR{3694568}},
   doi={10.1215/00127094-2017-0003},
}

\bib{RSZ18}{article}{
  author={Rapoport, M.},
  author={Smithling, B.},
  author={Zhang, W.},
  title={Regular formal moduli spaces and arithmetic transfer conjectures},
  journal={Mathematische Annalen},
  volume={370},
  pages={1079--1175},
  year={2018},
  publisher={Springer}
}

\bib{RZ96}{book}{
   author={Rapoport, M.},
   author={Zink, Th.},
   title={Period spaces for $p$-divisible groups},
   series={Annals of Mathematics Studies},
   volume={141},
   publisher={Princeton University Press, Princeton, NJ},
   date={1996},
   pages={xxii+324},
   isbn={0-691-02782-X},
   isbn={0-691-02781-1},
   review={\MR{1393439}},
   doi={10.1515/9781400882601},
}

\bib{Sie35}{article}{
	author={Siegel, Carl~Ludwig},
	title={\"{U}ber die analytische {T}heorie der quadratischen {F}ormen},
	date={1935},
	ISSN={0003-486X},
	journal={Ann. of Math. (2)},
	volume={36},
	number={3},
	pages={527\ndash 606},
	url={https://doi.org/10.2307/1968644},
	review={\MR{1503238}},
}

\bib{Siegel1951}{article}{
			author={Siegel, Carl~Ludwig},
			title={Indefinite quadratische {F}ormen und {F}unktionentheorie. {I}},
			date={1951},
			ISSN={0025-5831},
			journal={Math. Ann.},
			volume={124},
			pages={17\ndash 54},
			url={https://doi.org/10.1007/BF01343549},
			review={\MR{0067930}},
		}

\bib{Smi15}{article}{
  title={On the moduli description of local models for ramified unitary groups},
  author={Smithling, Brian},
  journal={International Mathematics Research Notices},
  volume={2015},
  number={24},
  pages={13493--13532},
  year={2015},
  publisher={Oxford University Press}
}

\bib{Ter13}{article}{
  title={On the regularity of special difference divisors},
  author={Terstiege, Ulrich},
  journal={Comptes Rendus Mathematique},
  volume={351},
  number={3-4},
  pages={107--109},
  year={2013},
  publisher={Elsevier}
}

\bib{TLR05}{article}{
  title={Infinitesimal local study of formal schemes},
  author={Tarr{\i}o, Leovigildo Alonso},
  author={L{\'o}pez, Ana Jerem{\i}as},
  author={Rodr{\i}guez, Marta P{\'e}rez},
  journal={arXiv preprint math/0504256},
  year={2005},
  publisher={Citeseer}
}

\bib{Weil1965}{article}{
			author={Weil, Andr\'{e}},
			title={Sur la formule de {S}iegel dans la th\'{e}orie des groupes
				classiques},
			date={1965},
			ISSN={0001-5962},
			journal={Acta Math.},
			volume={113},
			pages={1\ndash 87},
			url={https://doi.org/10.1007/BF02391774},
			review={\MR{0223373}},
		}

\bib{Yu19}{article}{
  title={On Moduli Description of Local Models For Ramified Unitary Groups and Resolution of Singularity},
  author={Yu, Si},
  year={2019},
  school={Johns Hopkins University}
}

\end{biblist}
\end{bibdiv}

\end{document}